% This is file EJM2egui.tex
% release v1.02, 18th December 2000
%   (based on JFM2egui.tex v1.13 for LaTeX 2e
%         and EJMguide.tex v0.2 for LaTeX 2.09)
% Copyright (C) 1998,1999,2000 Cambridge University Press

\NeedsTeXFormat{LaTeX2e}

% The following saves the original definitions of \geq and \leq (guide only).

\documentclass{ejmpreprint}

%%% macro
\usepackage{amsmath}
\usepackage{exscale}
\usepackage{amssymb}

\font\ppppcarac=ptmr8y at 5pt
\font\pppcarac=ptmr8y at 6pt
\font\ppcarac=ptmr8y at 7pt
\font\Pcarac=ptmr8y at 8pt

%\font\bf=ptmb8y at 10pt

\newcommand{\bfA}{{\mathbf{A}}}

\newcommand{\bfC}{{\mathbf{C}}}

\newcommand{\bfG}{{\mathbf{G}}}

\newcommand{\bfK}{{\mathbf{K}}}
\newcommand{\bfL}{{\mathbf{L}}}

\newcommand{\bfU}{{\mathbf{U}}}

\newcommand{\bfW}{{\mathbf{W}}}

\newcommand{\bfY}{{\mathbf{Y}}}
\newcommand{\bfZ}{{\mathbf{Z}}}

\newcommand{\bfh}{{\mathbf{h}}}

\newcommand{\bfu}{{\mathbf{u}}}

\newcommand{\bfw}{{\mathbf{w}}}
\newcommand{\bfx}{{\mathbf{x}}}
\newcommand{\bfy}{{\mathbf{y}}}
\newcommand{\bfz}{{\mathbf{z}}}

\newcommand{\bfzero}{{\mathbf{0}}}

\newcommand{\bfPhi}{{\boldsymbol{\Phi}}}
\newcommand{\bfPsi}{{\boldsymbol{\Psi}}}
\newcommand{\bfXi}{{\boldsymbol{\Xi}}}

\newcommand{\bfalpha}{{\boldsymbol{\alpha}}}
\newcommand{\bfbeta}{{\boldsymbol{\beta}}}

\newcommand{\bfeta}{{\boldsymbol{\eta}}}

\def\LL{{\mathbb{L}}}
\def\MM{{\mathbb{M}}}
\def\NN{{\mathbb{N}}}

\def\OO{{\mathbb{O}}}
\def\PP{{\mathbb{P}}}

\def\RR{{\mathbb{R}}}

\def\VV{{\mathbb{V}}}

\DeclareMathAlphabet{\mathonebb}{U}{bbold}{m}{n}
\def\11{{\ensuremath{\mathonebb{1}}}}

\newcommand{\curB}{{\mathcal{B}}}
\newcommand{\curC}{{\mathcal{C}}}
\newcommand{\curD}{{\mathcal{D}}}

\newcommand{\curG}{{\mathcal{G}}}

\newcommand{\curK}{{\mathcal{K}}}
\newcommand{\curL}{{\mathcal{L}}}
\newcommand{\curM}{{\mathcal{M}}}

\newcommand{\curP}{{\mathcal{P}}}

\newcommand{\curS}{{\mathcal{S}}}
\newcommand{\curT}{{\mathcal{T}}}

\newcommand{\Cdroit}{{\mathrm{C}}}

\newcommand{\SFGP}{{\hbox{{SFG}}^+}}

\newcommand{\trmat}{\mathrm{tr}}
\newcommand{\Dbar}{{\overline{D}}}
\newcommand{\obs}{{\hbox{{\ppcarac obs}}}}
\newcommand{\pobs}{{\hbox{{\pppcarac obs}}}}
\newcommand{\exper}{{\hbox{{\ppcarac exp}}}}
\newcommand{\experp}{{\hbox{{\pppcarac exp}}}}

\newcommand{\APM}{{\hbox{{\pppcarac APM}}}}

\newcommand{\OAPM}{{\hbox{{\pppcarac OAPM}}}}
\newcommand{\pOAPM}{{\hbox{{\ppppcarac OAPM}}}}

\newcommand{\opt}{{\hbox{{\ppcarac opt}}}}

\newcommand{\KL}{{\hbox{{\pppcarac KL}}}}
\newcommand{\prior}{{\hbox{{\Pcarac prior}}}}
\newcommand{\post}{{\hbox{{\Pcarac post}}}}

%%% Macros for the guide only %%%

% The following adds 6pt of space around verbatim environments.
\let\realverbatim=\verbatim
\let\realendverbatim=\endverbatim
\renewcommand\verbatim{\par\addvspace{6pt plus 2pt minus 1pt}\realverbatim}
\renewcommand\endverbatim{\realendverbatim\addvspace{6pt plus 2pt minus 1pt}}
%%% End of macros for the guide %%%

% See if the author has AMS Euler fonts installed: If they have, attempt
% to use the 'upmath' package to provide upright math.

\ifprodtf \else
  \checkfont{eurm10}
  \iffontfound
    \IfFileExists{upmath.sty}
      {\typeout{^^JFound AMS Euler Roman fonts on the system,
                   using the 'upmath' package.^^J}%
       \usepackage{upmath}}
      {\typeout{^^JFound AMS Euler Roman fonts on the system, but you
                   don't seem to have the}%
       \typeout{'upmath' package installed. EJM.cls can take advantage
                 of these fonts,^^Jif you use 'upmath' package.^^J}%
      }
  \else
  \fi
\fi

% See if the author has AMS symbol fonts installed: If they have, attempt
% to use the 'amssymb' package to provide the AMS symbol characters.

\ifprodtf \else
  \checkfont{msam10}
  \iffontfound
    \IfFileExists{amssymb.sty}
      {\typeout{^^JFound AMS Symbol fonts on the system, using the
                'amssymb' package.^^J}%
       \usepackage{amssymb}%
       \let\le=\leqslant  \let\leq=\leqslant
       \let\ge=\geqslant  \let\geq=\geqslant
      }{}
  \fi
\fi

% See if the author has the AMS 'amsbsy' package installed: If they have,
% use it to provide better bold math support (with \boldsymbol).

\ifprodtf \else
  \IfFileExists{amsbsy.sty}
    {\typeout{^^JFound the 'amsbsy' package on the system, using it.^^J}%
     \usepackage{amsbsy}}
    {\providecommand\boldsymbol[1]{\mbox{\boldmath $##1$}}}
\fi

%%% Example macros (some are not used in this sample file) %%%

% For units of measure

% Various bold symbols

% For multiletter symbols
 % cf plain TeX's \Re and Reynolds number
 % cf plain TeX's \Im
  % Reynolds number
 % Prandtl number, cf TeX's \Pr product
  % Peclet number
            % Airy function
            % Airy function

% For sans serif characters:
% The following macros are setup in EJM.cls for sans-serif fonts in text
% and math.  If you use these macros in your article, the required fonts
% will be substitued when you article is typeset by the typesetter.
%
% \textsfi, \mathsfi   : sans-serif slanted
% \textsfb, \mathsfb   : sans-serif bold
% \textsfbi, \mathsfbi : sans-serif bold slanted (doesnt exist in CM fonts)
%
% For san-serif roman use \textsf and \mathsf as normal.
%
    % for sans serif C
  % for sans serif sloping P
 % for sans serif bold-sloping Q

% Hat position
      % p with hat
      % R with hat
 % R with 2 hats

%       italic Sigma with double tilde

% array strut to make delimiters come out right size both ends
\newsavebox{\astrutbox}
\sbox{\astrutbox}{\rule[-5pt]{0pt}{20pt}}

\newdefinition{definition}[theorem]{Definition}
\newtheorem{lemma}{Lemma}
\newtheorem{proposition}{Proposition}

\newdefinition{rem}[theorem]{Remark}

\title[Random fields representations for statistical inverse boundary value problems]{Random fields representations for stochastic elliptic boundary value problems and statistical inverse problems}

\author[A. NOUY AND C. SOIZE]{%
  A. NOUY$\,^1$ \and C. SOIZE$\,^2$
}

\affiliation{%
  $^1\,$Ecole Centrale de Nantes, LUNAM Universit\'e, GeM UMR CNRS 6183,  1 rue de la Noe, 44321 Nantes, France\\
    email\textup{\nocorr: \texttt{anthony.nouy@ec-nantes.fr}}\\
  $^2\,$Universit\'e Paris-Est, Laboratoire Mod\'elisation et Simulation Multi-Echelle,
MSME UMR 8208 CNRS, 5 bd Descartes, 77454 Marne-la-Vallée, France\\   email\textup{\nocorr: \texttt{christian.soize@univ-paris-est.fr}}}

\date{10 April 2013}
\pubyear{2000}
\volume{000}
\pagerange{\pageref{firstpage}--\pageref{lastpage}}

\begin{document}

\label{firstpage}
\maketitle

\thispagestyle{empty}

\begin{abstract} \textbf{Abstract}. 
This paper presents new results allowing an unknown non-Gaussian positive matrix-valued random field to be identified  through a stochastic elliptic boundary value problem, solving a statistical inverse problem. A new general class of non-Gaussian positive-definite matrix-valued random fields, adapted to the statistical inverse problems in high stochastic dimension for their experimental identification, is introduced and its properties are analyzed. A minimal parametrization of discretized random fields belonging to this general class is proposed.  Using this parametrization of the general class,  a complete identification procedure is proposed. New results of the mathematical and numerical analyzes of the parameterized stochastic elliptic boundary value problem 
are presented. The numerical solution of this parametric stochastic problem provides an explicit approximation of the application that maps the parameterized general class of random fields to the corresponding set of random solutions. This approximation can be used during the identification procedure in order to avoid the solution of multiple forward stochastic problems. Since the proposed general class of random fields possibly contains random fields which are not uniformly bounded, a particular mathematical analysis is developed and  dedicated approximation methods are introduced. In order to obtain an algorithm for constructing the approximation of a very high-dimensional map, complexity reduction methods are introduced and are based on the use of sparse or low-rank approximation methods that exploit the tensor structure of the solution which results from the parametrization of the general class of random fields.
\end{abstract}

%\begin{keywords}
%Include up to five `key subject categories', listed in order of importance, taken from the Mathematics Subject Classification \\ (see http://www.ams.org/mathscinet/%msc/pdfs/classification2010.pdf ).
%60G60,65M32,60H35,62F99,35J70 
% 60G60 : Random fields
% 35R30 : Inverse problems (in PDE Miscellaneous topics)
% 60H25: Random operators and equations
% 65M32 : Inverse problems (in PDE numerical)
% 60H35 : Computational methods for stochastic equations
% 62F15 : Bayesian inference
% 62F99 : Parametric inference
% 35J70 : Degenerate elliptic equations
%\end{keywords}

\subsection*{Notations}
\

\noindent A lower case letter, $y$, is a  real deterministic variable.\\
A boldface lower case letter, $\bfy=(y_1,\ldots, y_N)$ is a real deterministic vector.\\
An upper case letter, $Y$, is a real random variable.\\
A boldface upper case letter, $\bfY=(Y_1,\ldots, Y_N)$ is a real random vector.\\
A lower (or an upper) case letter between brackets, $[a]$ (or $[A]$), is a real deterministic matrix.\\
A boldface upper case letter between brackets, $[\bfA]$, is a real random matrix.\\

\noindent
$E$: Mathematical expectation.\\
$\MM_{N,m}(\RR)$: set of all the $(N\times m)$ real matrices.\\
$\MM_{m}(\RR)$: set of all the square $(m\times m)$ real matrices.\\
$\MM_n^{\rm S}(\RR)$: set of all the symmetric $(n\times n)$ real matrices.\\
$\MM_n^+(\RR)$: set of all the symmetric positive-definite $(n\times n)$ real matrices.\\
$\MM_m^{\rm SS}(\RR)$: set of all the skew-symmetric $(m\times m)$ real matrices.\\
$\MM_n^{\rm U}(\RR)$:  set of all the upper triangular $(n\times n)$ real matrices with strictly positive diagonal.\\
$\OO(N)$: set of all the orthogonal $(N\times N)$ real matrices.\\
$\VV_m(\RR^N)$: compact Stiefel manifold of $(N\times m)$ orthogonal real matrices.\\
$\trmat$: Trace of a matrix.\\
$< \bfy  \, , \bfz >_2$: Euclidean inner product of $\bfy$ with $\bfz$ in $\RR^n$.\\
$\Vert \bfy \Vert_2$: Euclidean norm of a vector $\bfy$ in $\RR^n$.\\
$\Vert A \Vert_2$: Operator norm subordinated to $\RR^n$: $\Vert A \Vert_2 = \sup_{\Vert \bfy \Vert_2=1}\Vert A\bfy \Vert_2$.\\
$\Vert A \Vert_F$: Frobenius norm of a matrix such that $\Vert A \Vert_F^2=\trmat\{[A]^T\, [A]\}$.\\
$[I_{m,n}]$: matrix in $\MM_{m,n}(\RR)$ such that $[I_{m,n}]_{ij} = \delta_{ij}$.\\
$[I_{m}]$: identity matrix in $\MM_{m}(\RR)$.

\section{Introduction}
\label{Section1}
The experimental identification of an unknown non-Gaussian positive matrix-valued random field, using partial and limited experimental data for an observation vector related to the random solution of a stochastic elliptic boundary value problem, stays a challenging problem which has not received yet a complete solution, in particular for high stochastic dimension. An example of a challenging application concerns the identification of random fields that model at a mesoscale the elastic properties of complex heterogeneous materials that can not be described at the level of their microscopic constituents (e.g. biological materials such as the cortical bone), given some indirect observations measured on a collection of materials samples (measurements of their elastic response e.g. through image analysis or other displacement sensors).
Even if many results have already been obtained, additional developments concerning the stochastic representations of such random fields and additional mathematical and numerical analyzes of the associated stochastic elliptic boundary value problem must yet be produced. This is the objective of the present paper.\par

Concerning the representation of random fields adapted to the statistical inverse problems for their experimental identification, one interesting type of representation for any non-Gaussian second-order random field is based on the use of the polynomial chaos expansion \cite{Wiener1938,Cameron1947} for which an efficient construction has been proposed in \cite{Ghanem1991,Ghanem1996,Doostan2007}, consisting in coupling a Karhunen-Lo\`eve expansion (allowing a statistical reduction to be done) with a polynomial chaos expansion of the reduced model. This type of construction has been extended for an arbitrary probability measure \cite{Xiu2002,LeMaitre2004,Lucor2004,Soize2004,Wan2006,Ernst2012} and for random coefficients of the expansion \cite{Soize2009}.\par

Concerning the identification of random fields by solving stochastic inverse problems, works can be found such as \cite{DeOliveira1997,Hristopulos2003,Kaipio2005,Stuart2010} and some methods and formulations have been proposed for the experimental identification of non-Gaussian random fields  \cite{Desceliers2006,Ghanem2006,Das2008,Zabaras2008,Das2009,Marzouk2009,Stefanou2009,Arnst2010} in low stochastic dimension. More recently, a more advanced methodology \cite{Soize2010,Soize2011} has been proposed for the identification of non-Gaussian positive-definite matrix-valued random fields in high stochastic dimension for the case for which only partial and limited experimental data are available.\par

Concerning the mathematical analysis of stochastic elliptic boundary value problems and the associated numerical aspects for approximating  the random solutions (the forward problem), many works have been devoted to the simple case of uniformly elliptic and uniformly bounded operators \cite{Deb2001,Babuska2004,Babuska2005,Matthies2005,Frauenfelder2005, Babuska2007,Wan2009,Nouy2009,LeMaitre2010}. For these problems, existence and uniqueness results can be directly obtained using Lax-Milgram theorem and Galerkin approximation methods in classical approximation spaces can be used.  More recently, some works have addressed the mathematical and numerical analyzes of some classes of non uniformly elliptic operators \cite{Galvis2009,Gittelson2010, Mugler2011,Charrier2012}, including the case of log-normal random fields.

For the numerical solution of stochastic boundary value problems, classical non adapted choices of stochastic approximation spaces lead to very high-dimensional representations of the random solutions, e.g. when using classical (possibly piecewise)  polynomial spaces in the random variables. For addressing this complexity issue, several complexity reduction methods have been recently proposed, such as model reduction methods for stochastic and parametric partial differential equations \cite{Nouy2007,Nouy2008,Boyaval2009}, sparse approximation
methods for high-dimensional approximations \cite{Todor2007,Nobile2008,Cohen2010,Babuska2010,Schwab2011}, or low-rank tensor approximation methods that exploit the tensor structure of stochastic approximation spaces and also allow the representation of high-dimensional functions \cite{Doostan2009,Khoromskij2011,Nouy2010,Cances2011,Falco2011,Falco2012,Matthies2012}. \\
\par

In this paper, we present new results concerning:
\begin{enumerate}[(iii)]\renewcommand{\theenumi}{\roman{enumi}}
\item  the construction of a general class of non-Gaussian positive-definite matrix-valued random fields which is adapted to the identification in high stochastic dimension, presented in Section \ref{Section2},
\item the parametrization of the discretized random fields belonging to the general class and an adapted identification strategy for this class, presented in Section \ref{Section3},
\item the mathematical analysis of the parameterized stochastic elliptic boundary value problems whose random coefficients belong to the parameterized general class of random fields, and the introduction and analysis of dedicated approximation methods, presented in Section \ref{Section4}.
\end{enumerate}

{Concerning (i), we introduce a new class of matrix-valued random fields $\{[\bfK(\bfx)], \bfx\in D\}$ indexed on a domain $D$ that are expressed as a nonlinear function of second order symmetric matrix-valued random fields $ \{[\bfG(\bfx)], \bfx\in D\}$. More precisely, we propose and analyze two classes of random fields that use two different nonlinear mappings. The first class, which uses a matrix exponential mapping, contains the class of (shifted) lognormal random fields (when $[\bfG]$ is a Gaussian random field) and can be seen as a generalization of this classical class. The second class contains  (a shifted version of) the class $SFE^+$ of positive-definite matrix-valued random fields introduced in \cite{Soize2006} which arises from a maximum entropy principle given natural constraints on the integrability of random matrices and their inverse.}

{Concerning (ii), and following \cite{Soize2010}, we introduce a parametrization of the second order random fields $[\bfG]$ using Karhunen-Loeve and Polynomial Chaos expansions, therefore yielding a parametrization of the class of random fields in terms of the set of coefficients of the Polynomial Chaos expansions of the random variables of the Karhunen-Loeve expansions. Second order statistical properties of $[\bfG]$ are imposed by enforcing orthogonality constraints on the set of coefficients, therefore yielding a class of random fields parametrized on the compact Stiefel manifold. It finally results in a parametrized class of random fields $\mathcal{K}=\{[\bfK(\cdot)] = \mathcal{F}(\cdot,\bfXi,\bfz) ; \bfz \in \mathbb{R}^{\nu}\}$, where the parameters $\bfz$ are associated with a parametrization of the compact Stiefel manifold, and where $\bfXi$ is the Gaussian germ of the Polynomial Chaos expansion. 
A general procedure is then proposed for the identification of random fields in this new class, this procedure being an adaptation for the present parametrization of the procedure described in \cite{Soize2011}. The parametrization of the Stiefel manifold, some additional mathematical properties of the resulting parametrized class of random fields and the general identification procedure are introduced in Section \ref{Section3}.}

The numerical solution of the parameterized stochastic boundary value problems provides an explicit approximation of the application $u : D\times \mathbb{R}^{N_g}\times \mathbb{R}^\nu \rightarrow \mathbb{R}$ that maps the parameterized general class of random fields to the corresponding set of random solutions $\{u(\cdot,\bfXi,\bfz) ; \bfz \in \mathbb{R}^{\nu} \}$. This explicit map can be efficiently used in the identification procedure in order to avoid the solution of multiple stochastic forward problems. 
{Concerning (iii)}, the general class of random fields possibly contain random fields which are not uniformly bounded, which requires a particular mathematical analysis and the introduction of dedicated approximation methods. {In particular, in Section \ref{Section4}, we prove the existence and uniqueness of a weak solution $u$ in the natural function space associated with the energy norm, and we propose suitable constructions of approximation spaces for Galerkin approximations.}
Also, since the solution of this problem requires the approximation of a very high-dimensional map, {complexity reduction methods are required. The solution map has a tensor structure which is inherited from 
the particular parametrization of the general class of random fields. This allows the use of complexity reduction methods based on low-rank or  sparse  tensor approximation methods. The applicability of these reduction techniques in the present context is briefly discussed in Section \ref{Section4}.3. }
Note that this kind of approach for stochastic inverse problems has been recently analyzed in \cite{Schwab2012} in a particular Bayesian setting with another type of representation of random fields, and with sparse approximation methods for the approximation of the high-dimensional functions. 

\section{General class of non-Gaussian positive-definite matrix-valued random fields}
\label{Section2}
\subsection{Elements for the construction of a general class of random fields}
\label{Section2.1}

Let $d\geq 1$ and $n\geq 1$ be two integers. Let $D$ be a bounded open domain of $\RR^d$, let $\partial D$ be its boundary and  let $\Dbar = D\cup \partial D$ be its closure.\par

Let $\curC^+_n$ be the set of all the random fields $[\bfK] = \{[\bfK(\bfx)], \bfx\in D\}$, defined on a probability space $(\Theta,\curT,\curP)$, with values in $\MM_n^+(\RR)$.
A random field  $[\bfK]$ in $\curC^+_n$ corresponds to the coefficients of a stochastic elliptic operator and must be identified from data, by solving a statistical inverse boundary value problem. The objective of this section is to construct a general representation of $[\bfK]$ which allows its identification to be performed using experimental data and solving a statistical inverse problem based on the explicit construction of the solution of the forward (direct) stochastic boundary value problem. For that, we need to introduce some hypotheses for random field $[\bfK]$ which requires the introduction of a subset of $\curC^+_n$.

\par 
In order to normalize random field $[\bfK]$, we introduce a function $\bfx\mapsto [\underline{K}(\bfx)]$ from $D$ into $\MM_n^+(\RR)$ such that, for all $\bfx$ in $D$ and for all $\bfz$ in $\RR^n$,
\begin{equation}                                                                                                                               \label{Eq1}
\underline k_{\, 0} \Vert \bfz\Vert^2_2 \,\, \leq \,\,\, < \![\underline{K}(\bfx) ]\, \bfz \, , \bfz >_2
\,\, \leq \,\,\, n^{-1/2} \widetilde{\underline k}_{\, 1}  \Vert \bfz\Vert^2_2   \, ,
\end{equation}
in which $\underline k_{\, 0}$ and $\widetilde{\underline k}_{\, 1}$ are positive real constants, independent of $\bfx$, such that $0 < \underline k_{\, 0} < \widetilde{\underline k}_{\, 1} < +\infty$. The right inequality means that $\Vert \underline{K}(\bfx) \Vert_2 \leq n^{-1/2} \widetilde{\underline k}_{\, 1}$ and using
$\Vert \underline{K}(\bfx) \Vert_F \leq \sqrt{n}\, \Vert\underline{K}(\bfx)\Vert_2$ yields
$\Vert\underline{K}(\bfx)\Vert_F \leq \widetilde{\underline k}_{\, 1}$. Since $0\leq [\underline{K}(\bfx)]_{jj}$ and
$[\underline{K}(\bfx)]_{jj}^2\leq \Vert\underline{K}(\bfx)\Vert_F^2$, it can be deduced that
\begin{equation}                                                                                                                               \label{Eq1bis}
\trmat [\underline{K}(\bfx)] \,\,\leq \,\,\underline k_{\, 1} \, ,
\end{equation}
in which $\underline k_{\, 1}$ is a positive real constant, independent of $\bfx$, such that $\underline k_{\, 1} = n \, \widetilde{\underline k}_{\, 1}$.\\

\noindent We then introduce the following representation of the lower-bounded random field $[\bfK]$ in $\curC^+_n$ such that, for all $\bfx$ in $D$,
\begin{equation}                                                                                                                               \label{Eq2}
[\bfK(\bfx)] =\frac{1}{1+\varepsilon} [\underline L(\bfx)]^T\,\{ \varepsilon [\, I_n] + [\bfK_0(\bfx)]\}  \, [\underline L(\bfx)] \, ,
\end{equation}
in which $\varepsilon >0$ is any fixed positive real number, where $[\, I_n]$ is the $(n\times n)$ identity matrix and where $[\underline L(\bfx)]$ is the upper triangular $(n\times n)$ real matrix such that, for all $\bfx$ in $D$,
$[\underline{K}(\bfx) ] = [\underline L(\bfx)]^T\,[\underline L(\bfx)]$ and where $[\bfK_0] = \{[\bfK_0(\bfx)], \bfx\in D\}$ is any random field in $\curC^+_n$.\\

\noindent  For instance, if function $[\underline{K}]$ is chosen as the mean function of random field $[\bfK]$, that is to say, if for all $\bfx$ in $D$, $[\underline{K}(\bfx)] = E\{[\bfK(\bfx)] \}$, then Eq.~(\ref{Eq2}) shows that $E\{[\bfK_0(\bfx)] \}$ must be equal to $[\, I_n]$, what shows that random field $[\bfK_0]$ is normalized.

\begin{lemma}\label{lemma1} % Lemma 1
If  $[\bfK_0]$ is any random field in $\curC^+_n$, then the random field $[\bfK]$, defined by Eq.~(\ref{Eq2}), is such that
\begin{enumerate}[(iii)]\renewcommand{\theenumi}{\roman{enumi}}

\item  for all $\bfx$ in $D$,
\begin{equation}                                                                                                                              \label{Eq3}
\Vert\bfK(\bfx)\Vert_F \leq \frac{\underline k_{\, 1}}{1+\varepsilon} (\sqrt{n}\, \varepsilon + \Vert\bfK_0(\bfx)\Vert_F)\quad a.s \, .
\end{equation}
\item for all $\bfz$ in $\RR^n$ and for all $\bfx$ in $D$,
\begin{equation}                                                                                                                               \label{Eq4}
\underline k_{\, \varepsilon} \Vert \bfz\Vert^2_2 \,\, \leq \,\,\, < \![\bfK(\bfx) ]\, \bfz \, , \bfz >_2 \quad a.s \, ,
\end{equation}
in which $\underline k_{\, \varepsilon} = \underline k_{\, 0} \, \varepsilon /(1+\varepsilon)$ is a positive constant independent of $\bfx$.
\item  for all $\bfx$ in $D$,
\begin{equation}                                                                                                                              \label{Eq5}
\Vert [\bfK(\bfx)]^{-1}\Vert_F \leq \frac{\sqrt{n}\,(1+\varepsilon)}{\varepsilon}\, \trmat [\underline K(\bfx)]^{-1}  \quad a.s \, ,
\end{equation}
which shows that, for all integer $p\geq 1$,  $\{ [\bfK(\bfx)]^{-1}, \bfx\in D\}$ is a $p$-order random field, \textit{i.e.}, for all $\bfx$ in $D$, $E\{ \Vert [\bfK(\bfx)]^{-1}\Vert_F^p\} < +\infty$ and in particular, is a second-order random field.
\end{enumerate}
\end{lemma}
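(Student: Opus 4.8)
All three assertions follow from elementary norm estimates applied to the factorization \eqref{Eq2}, using only that $[\bfK_0(\bfx)]$ lies in $\MM_n^+(\RR)$ almost surely (never any boundedness of $[\bfK_0]$, which is the whole point of the construction); I would establish them in the order (ii), (i), (iii). For (ii), I would fix $\bfx\in D$ and $\bfz\in\RR^n$, put $\bfw=[\underline L(\bfx)]\bfz$, and rewrite \eqref{Eq2} as
\[
< [\bfK(\bfx)]\,\bfz\, ,\bfz>_2 = \frac{1}{1+\varepsilon}\Bigl(\varepsilon\,\Vert\bfw\Vert_2^2 + < [\bfK_0(\bfx)]\,\bfw\, ,\bfw>_2\Bigr) \ge \frac{\varepsilon}{1+\varepsilon}\Vert\bfw\Vert_2^2 = \frac{\varepsilon}{1+\varepsilon}< [\underline K(\bfx)]\,\bfz\, ,\bfz>_2 ,
\]
where the inequality drops the nonnegative term $< [\bfK_0(\bfx)]\,\bfw\, ,\bfw>_2$ and the last equality uses $[\underline K(\bfx)]=[\underline L(\bfx)]^T[\underline L(\bfx)]$. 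The left-hand bound in \eqref{Eq1} then gives \eqref{Eq4} with $\underline k_\varepsilon=\underline k_0\,\varepsilon/(1+\varepsilon)$, a.s.

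For (i), I would use the mixed-norm inequalities $\Vert AB\Vert_F\le\Vert A\Vert_2\,\Vert B\Vert_F$ and $\Vert AB\Vert_F\le\Vert A\Vert_F\,\Vert B\Vert_2$ to estimate, from \eqref{Eq2},
\[
\Vert\bfK(\bfx)\Vert_F \le \frac{1}{1+\varepsilon}\,\Vert[\underline L(\bfx)]\Vert_2^2\;\Vert\varepsilon[I_n]+[\bfK_0(\bfx)]\Vert_F .
\]
Since $[\underline K(\bfx)]$ is positive-definite, its operator norm (largest eigenvalue) is bounded by its trace, so $\Vert[\underline L(\bfx)]\Vert_2^2=\Vert[\underline K(\bfx)]\Vert_2\le\trmat[\underline K(\bfx)]\le\underline k_1$ by \eqref{Eq1bis}; and the triangle inequality gives $\Vert\varepsilon[I_n]+[\bfK_0(\bfx)]\Vert_F\le\varepsilon\sqrt n+\Vert\bfK_0(\bfx)\Vert_F$. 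Together these yield \eqref{Eq3}.

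For (iii), I would invert \eqref{Eq2},
\[
[\bfK(\bfx)]^{-1} = (1+\varepsilon)\,[\underline L(\bfx)]^{-1}\,\{\varepsilon[I_n]+[\bfK_0(\bfx)]\}^{-1}\,[\underline L(\bfx)]^{-T},
\]
and apply the same mixed-norm inequalities to obtain $\Vert[\bfK(\bfx)]^{-1}\Vert_F\le(1+\varepsilon)\,\Vert[\underline L(\bfx)]^{-1}\Vert_2^2\,\Vert\{\varepsilon[I_n]+[\bfK_0(\bfx)]\}^{-1}\Vert_F$. As before, $\Vert[\underline L(\bfx)]^{-1}\Vert_2^2=\Vert[\underline K(\bfx)]^{-1}\Vert_2\le\trmat[\underline K(\bfx)]^{-1}$; the key point is that, $[\bfK_0(\bfx)]$ being positive-definite a.s., the eigenvalues of $\varepsilon[I_n]+[\bfK_0(\bfx)]$ are all $\ge\varepsilon$, hence $\Vert\{\varepsilon[I_n]+[\bfK_0(\bfx)]\}^{-1}\Vert_2\le 1/\varepsilon$ and so $\Vert\{\varepsilon[I_n]+[\bfK_0(\bfx)]\}^{-1}\Vert_F\le\sqrt n/\varepsilon$. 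This gives \eqref{Eq5}. Finally, \eqref{Eq1} forces the eigenvalues of $[\underline K(\bfx)]$ to be $\ge\underline k_0$, so $\trmat[\underline K(\bfx)]^{-1}\le n/\underline k_0$, a finite deterministic constant independent of $\bfx$; raising \eqref{Eq5} to the power $p$ and taking the expectation shows $E\{\Vert[\bfK(\bfx)]^{-1}\Vert_F^p\}<+\infty$ for every integer $p\ge 1$, in particular for $p=2$.

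I do not expect a genuine obstacle here: the computation is routine. The only points requiring care are choosing the mixed operator/Frobenius norm inequalities so that a $\Vert\cdot\Vert_2$ always falls on the triangular factors $[\underline L(\bfx)]^{\pm 1}$ and a $\Vert\cdot\Vert_F$ on the middle factor, and the repeated use of ``largest eigenvalue $\le$ trace'' for symmetric positive-definite matrices. The one conceptually important observation — the reason all the bounds are uniform and survive the possible unboundedness of $[\bfK_0]$ — is that adding $\varepsilon[I_n]$ installs a deterministic lower bound $\varepsilon$ on the spectrum of the middle factor, which is exactly what controls $[\bfK(\bfx)]^{-1}$, while the upper bounds in (i)--(ii) never need to see $[\bfK_0]$ beyond the triangle inequality and positivity.
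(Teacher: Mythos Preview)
Your proof is correct and follows essentially the same route as the paper: norm-submultiplicativity applied to the factorization \eqref{Eq2}, the left inequality in \eqref{Eq1} for the lower bound, and the spectral lower bound $\varepsilon$ on $\varepsilon[I_n]+[\bfK_0(\bfx)]$ for the inverse. The only cosmetic difference is that the paper bounds the triangular factors via $\Vert[\underline L(\bfx)]\Vert_F^2=\trmat[\underline K(\bfx)]\le\underline k_1$ (Frobenius submultiplicativity throughout), whereas you use the mixed inequality with $\Vert[\underline L(\bfx)]\Vert_2^2=\Vert[\underline K(\bfx)]\Vert_2\le\trmat[\underline K(\bfx)]\le\underline k_1$; both land on the same constants.
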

%
% end Lemma 1
%
\begin{proof}
(i) Taking the Frobenius norm of the two members of Eq.~(\ref{Eq2}), using $\Vert\underline L(\bfx)\Vert_F$ $= \Vert[\underline L(\bfx)]^T\Vert_F =\sqrt{\trmat[\underline K(\bfx)]}$ and taking into account Eq.~(\ref{Eq1bis}), we obtain Eq.~(\ref{Eq3}).\\
(ii) Eq.~(\ref{Eq4}) can easily be proven using Eq.~(\ref{Eq2}) and the left inequality in Eq.~(\ref{Eq1}).\\
(iii) We have $\Vert[\bfK(\bfx)]^{-1}\Vert_F \leq \Vert [\underline L(\bfx)]^{-1}\Vert_F \Vert [\underline L(\bfx)]^{-T}\Vert_F \Vert[\bfK_\varepsilon(\bfx)]^{-1}\Vert_F$ in which
$[\bfK_\varepsilon(\bfx)]= (\varepsilon [\, I_n] + [\bfK_0(\bfx)])/(1+\varepsilon)$. Since $[\bfK_0(\bfx)]$ is positive definite almost surely, for $\bfx$ fixed in $D$, we can write $[\bfK_0(\bfx)] = [\bfPhi(\bfx)]\, [\Lambda(\bfx)]\, [\bfPhi(\bfx)]^T$ in which $[\Lambda(\bfx)]$ is the diagonal random matrix of the positive-valued random  eigenvalues $\Lambda_1(\bfx),$ $\ldots ,$ $\Lambda_n(\bfx)$ of $[\bfK_0(\bfx)]$ and $[\bfPhi(\bfx)]$ is the orthogonal real random matrix  made up of the associated random eigenvectors. It can then be deduced that
$[\bfK_\varepsilon(\bfx)]^{-1} =  (1+\varepsilon)\, [\bfPhi(\bfx)]\, (\varepsilon [I_n] + [\Lambda(\bfx)])^{-1}\, [\bfPhi(\bfx)]^T$
and consequently, for $\bfx$  in $D$,
$\Vert[\bfK_\varepsilon(\bfx)]^{-1}\Vert^2_F$ $=\trmat\{[\bfK_\varepsilon(\bfx)]^{-2}\} = (1+\varepsilon)^2 \sum_{j=1}^n (\varepsilon +\Lambda_j(\bfx) )^{-2}\leq (1+\varepsilon)^2 n/\varepsilon^2$. Since
$\Vert [\underline L(\bfx)]^{-1}\Vert_F$ $\Vert [\underline L(\bfx)]^{-T}\Vert_F $ $ =\trmat[\underline K(\bfx)]^{-1} $,
we deduce Eq.~(\ref{Eq5}).
\end{proof}
%
% end of the proof
%

\begin{lemma}\label{lemma2} % Lemma 2
If  $[\bfK_0]$ is any random field in $\curC^+_n$, such that, for all $\bfx$ in $D$,
\begin{enumerate}[(ii)]\renewcommand{\theenumi}{\roman{enumi}}

\item $\Vert\bfK_0(\bfx)\Vert_F \leq \beta_0 < \infty$ almost surely, in which $\beta_0$ is a positive-valued random variable independent of $\bfx$, then the random field $[\bfK]$, defined by Eq.~(\ref{Eq2}), is such that, for all $\bfx$ in $ D$,
\begin{equation}                                                                                                                               \label{Eq6}
\Vert\bfK(\bfx)\Vert_F \,\,\leq \,\, \beta < +\infty \quad a.s\, ,
\end{equation}
in which  $\beta$ is the positive-valued random variable independent of $\bfx$ such that $\beta =\underline k_{\, 1} (\sqrt{n}\, \varepsilon +\beta_0)/(1+\varepsilon)$.
\item  $E\{ \Vert\bfK_0(\bfx)\Vert^2_F \} < + \infty$, then $[\bfK]$ is a second-order random field,
\begin{equation}                                                                                                                               \label{Eq7}
E\{\Vert\bfK(\bfx)\Vert^2_F\} < +\infty \, .
\end{equation}
\end{enumerate}
\end{lemma}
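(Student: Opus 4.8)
The plan is to deduce both parts of Lemma~\ref{lemma2} directly from Lemma~\ref{lemma1}(i), which already gives the almost-sure pointwise bound
\[
\Vert\bfK(\bfx)\Vert_F \leq \frac{\underline k_{\, 1}}{1+\varepsilon}\bigl(\sqrt{n}\,\varepsilon + \Vert\bfK_0(\bfx)\Vert_F\bigr)\quad a.s.
\]
for every $\bfx$ in $D$. So the work is purely to propagate the two hypotheses on $[\bfK_0]$ through this inequality.

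For part (i): assume $\Vert\bfK_0(\bfx)\Vert_F \leq \beta_0$ a.s., where $\beta_0$ is a (possibly random) positive variable not depending on $\bfx$. Substituting into the Lemma~\ref{lemma1}(i) bound and using monotonicity of $t\mapsto \underline k_{\,1}(\sqrt n\,\varepsilon + t)/(1+\varepsilon)$, I get $\Vert\bfK(\bfx)\Vert_F \leq \underline k_{\,1}(\sqrt n\,\varepsilon+\beta_0)/(1+\varepsilon) =: \beta$ almost surely, with $\beta$ independent of $\bfx$, which is exactly Eq.~(\ref{Eq6}). One small point to be careful about: the exceptional null set in Lemma~\ref{lemma1}(i) and the one in the hypothesis both depend on $\bfx$; but since for each fixed $\bfx$ we only combine the two a.s.\ statements at that same $\bfx$, the union is still null, so the conclusion holds a.s.\ for each $\bfx$ as stated.

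For part (ii): assume $E\{\Vert\bfK_0(\bfx)\Vert_F^2\} < +\infty$. Square the Lemma~\ref{lemma1}(i) inequality and use $(a+b)^2 \leq 2a^2+2b^2$ to get
\[
\Vert\bfK(\bfx)\Vert_F^2 \leq \frac{2\,\underline k_{\,1}^2}{(1+\varepsilon)^2}\bigl(n\,\varepsilon^2 + \Vert\bfK_0(\bfx)\Vert_F^2\bigr)\quad a.s.
\]
Taking expectations and using linearity and monotonicity of $E$, the right-hand side is finite because $n\varepsilon^2$ is a constant and $E\{\Vert\bfK_0(\bfx)\Vert_F^2\}<+\infty$ by hypothesis; hence $E\{\Vert\bfK(\bfx)\Vert_F^2\}<+\infty$, which is Eq.~(\ref{Eq7}).

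Honestly, there is no real obstacle here: the lemma is an immediate corollary of Lemma~\ref{lemma1}(i), and the only thing worth stating explicitly is the elementary inequality $(a+b)^2\le 2a^2+2b^2$ (or equivalently just $\Vert\bfK(\bfx)\Vert_F^2 \le \beta^2$ with $\beta$ as in part (i) when (i)'s hypothesis also holds) used to pass from the linear bound to the quadratic one before integrating. I would write the proof in three or four lines, citing Eq.~(\ref{Eq3}) for each part and noting that monotonicity of the affine map and of expectation do all the remaining work.
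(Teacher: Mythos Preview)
Your proposal is correct and follows exactly the same approach as the paper, which simply states that Eqs.~(\ref{Eq6}) and (\ref{Eq7}) are directly deduced from Eq.~(\ref{Eq3}). You have merely spelled out the substitution and the elementary squaring step that the paper leaves implicit.
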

%
% end Lemma 2
%
\begin{proof}
Eqs.~(\ref{Eq6}) and (\ref{Eq7}) are directly deduced from Eq.~(\ref{Eq3}).
\end{proof}
%
% end of the proof
%
\begin{rem} % Remark 1
If $\beta_0$ is a second-order random variable, then Eq.~(\ref{Eq6}) implies Eq.~(\ref{Eq7}). However, if Eq.~(\ref{Eq7}) holds for all $\bfx$ in $ D$, then this equation does not imply the existence of a positive random variable $\beta$ such that  Eq.~(\ref{Eq6}) is verified and \textit{a fortiori}, even if $\beta$ existed, this random variable would not be, in general,  a second-order random variable.
\end{rem}
\subsection{Representations of random field $[\bfK_0]$ as transformations of a non-Gaussian second-order symmetric matrix-valued random field $[\bfG]$}
\label{Section2.2}
Let $\curC^{2,S}_n$ be the set of all the second-order random fields $[\bfG] = \{[\bfG(\bfx)], \bfx\in  D\}$, defined on  probability space $(\Theta,\curT,\curP)$, with values in $\MM_n^{\rm S}(\RR)$. Consequently, for all $\bfx$ in $ D$, we have $E\{ \Vert\bfG(\bfx)\Vert^2_F \} < + \infty$.
%
%Let $\curN^{\,2,S}_n$ be the subset of $\curC^{2,S}_n$ defined as the set of all the random fields $[\bfG]$ belonging to $\curC^{2,S}_n$ and satisfying the following normalization conditions: for all $\bfx$ in $ D$ and for all $1\leq j\leq j'\leq n$, $E\{\bfG(\bfx)_{jj'}\} = 0$ and
%$E\{\bfG(\bfx)_{jj'}^2\} = 1$.\\

In this section, we propose two representations of random field $[\bfK_0]$ belonging to $\curC^+_n$, yielding  the definition 
of two different subsets of $\curC^+_n$:
\begin{itemize}
\item \emph{Exponential type representation}.  The first type representation is written as $[\bfK_0(\bfx)] = \exp_\MM([\bfG(\bfx)])$ with $[\bfG]$ in $\curC^{2,S}_n$ and where $\exp_\MM$ denotes the exponential of symmetric square real matrices. It should be noted that random field $[\bfG]$ is not assumed to be Gaussian. If $[\bfG]$ were a Gaussian random field, then $[\bfK_0]$ would be a log-normal matrix-valued random field.

\item \emph{Square type representation}. The second type representation is written as  $[\bfK_0(\bfx)] = [\bfL(\bfx)]^T \, [\bfL(\bfx)]$ in which $[\bfL(\bfx)]$ is an upper triangular $(n\times n)$ real random matrix, for which the diagonal terms are positive-valued random variables, and  which is written as $[\bfL(\bfx)] = [\curL([\bfG(\bfx)])]$ with $[\bfG]$ in $\curC^{2,S}_n$ and where $[G] \mapsto [\curL([G])]$ is a well defined deterministic mapping from $\MM_n^{\rm S}(\RR)$ into the set of all the upper triangular $(n\times n)$ real deterministic matrices. Again, random field $[\bfG]$ is not assumed to be Gaussian. If for all $1\leq j\leq j'\leq n$, $[\bfG]_{jj'}$ are independent copies of a Gaussian random field and for a particular definition of mapping $[\curL([G])]$, then $[\bfK_0]$ would be the set $\SFGP$ of the Non-Gaussian matrix-valued random field previously introduced in \cite{Soize2006}.
\end{itemize}

\noindent These two representations are general enough, but the mathematical properties of each one will be slightly different and the computational aspects will be different.
\subsubsection{Exponential type representation of random field $[\bfK_0]$}
\label{Section2.2.1}

It should be noted that for all symmetric real matrix $[A]$ in $\MM_n^{\rm S}(\RR)$, $[B] = \exp_\MM([A])$ is a well defined matrix belonging to $\MM_n^+(\RR)$.
All matrix $[A]$ in $\MM_n^{\rm S}(\RR)$ can be written as $[A] = [\Phi]\, [\mu]\, [\Phi]^T$ in which $[\mu]$ is the diagonal matrix of the real eigenvalues $\mu_1,\ldots , \mu_n$ of $[A]$ and $[\Phi]$ is the orthogonal real matrix  made up of the associated eigenvectors. We then have $[B] = [\Phi]\,\exp_\MM([\mu])[\Phi]^T$ in which
$\exp_\MM([\mu])$ is the diagonal matrix in $\MM_n^+(\RR)$ such that $[\exp_\MM([\mu])]_{jk} = e^{\mu_j}\, \delta_{jk}$ .

\par 
For all random field $[\bfG]$ belonging to $\curC^{2,S}_n$, the random field $[\bfK_0] = \exp_\MM([\bfG])$
defined, for all $\bfx$ in $ D$, by
\begin{equation}                                                                                                                               \label{Eq8}
[\bfK_0(\bfx)] = \exp_\MM([\bfG(\bfx)])  \, ,
\end{equation}
belongs to $\curC^+_n$. If $[\bfK_0]$ is any random field given in $\curC^+_n$, then there exists a unique random field $[\bfG]$ with values in the
$\MM^{\rm S}_n(\RR)$ such that, for all $\bfx$ in $ D$,
\begin{equation}                                                                                                                               \label{Eq9}
[\bfG(\bfx)] = \log_\MM([\bfK_0(\bfx)])  \, .
\end{equation}
in which $\log_\MM$ is the reciprocity mapping of $\exp_\MM$ which is defined on $\MM^+_n(\RR)$ with values in $\MM^{\rm S}_n(\RR)$, but in general, this random field $[\bfG]$ is not a second-order random field and therefore, is not in $\curC^{2,S}_n$.
The following lemma shows that, if a random field $[\bfK_0]$ in $\curC^+_n$, satisfies additional properties, then 
there exists $[\bfG]$ in $\curC^{2,S}_n$ such that $[\bfK_0] = \exp_\MM([\bfG])$.
\begin{lemma}\label{lemma3} % Lemma 3
Let $[\bfK_0]$ be a random field belonging to $\curC^+_n$ such that for all $\bfx \in  D$,
\begin{equation}                                                                                                                               \label{Eq10}
E\{\Vert\bfK_0(\bfx)\Vert_F^2\} < + \infty \quad , \quad E\{\Vert[\bfK_0(\bfx)]^{-1}\Vert_F^2\} < + \infty  \, .
\end{equation}
Then there exists $[\bfG]$ belonging to $\curC^{2,S}_n$ such that $[\bfK_0] = \exp_\MM([\bfG])$.
\end{lemma}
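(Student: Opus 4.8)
The natural candidate is the random field already singled out in Eq.~(\ref{Eq9}), namely $[\bfG(\bfx)] = \log_\MM([\bfK_0(\bfx)])$ for $\bfx$ in $D$. Since $\log_\MM$ is the reciprocity mapping of $\exp_\MM$, one has $\exp_\MM([\bfG(\bfx)]) = [\bfK_0(\bfx)]$ for all $\bfx$ in $D$, and, as noted in the text preceding the lemma, $[\bfG]$ is a random field with values in $\MM_n^{\rm S}(\RR)$. Hence the only point to establish is that, under the extra integrability hypotheses~(\ref{Eq10}), $[\bfG]$ is a second-order random field, i.e. $E\{\Vert\bfG(\bfx)\Vert_F^2\} < +\infty$ for every $\bfx$ in $D$; this places $[\bfG]$ in $\curC^{2,S}_n$ and concludes the proof.

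The plan is to get this from an almost sure, pointwise (deterministic) bound. Fix $\bfx$ in $D$ and use the spectral decomposition $[\bfK_0(\bfx)] = [\bfPhi(\bfx)]\,[\Lambda(\bfx)]\,[\bfPhi(\bfx)]^T$, with $[\bfPhi(\bfx)]$ orthogonal and $[\Lambda(\bfx)]$ the diagonal matrix of the a.s. positive eigenvalues $\Lambda_1(\bfx),\ldots,\Lambda_n(\bfx)$ of $[\bfK_0(\bfx)]$ (exactly as in the proof of Lemma~\ref{lemma1}(iii)). Then $[\bfG(\bfx)] = [\bfPhi(\bfx)]\,\log_\MM([\Lambda(\bfx)])\,[\bfPhi(\bfx)]^T$, where $\log_\MM([\Lambda(\bfx)])$ is diagonal with entries $\log\Lambda_j(\bfx)$, so by orthogonal invariance of the Frobenius norm $\Vert\bfG(\bfx)\Vert_F^2 = \sum_{j=1}^n (\log\Lambda_j(\bfx))^2$, while $\Vert\bfK_0(\bfx)\Vert_F^2 = \sum_{j=1}^n \Lambda_j(\bfx)^2$ and $\Vert[\bfK_0(\bfx)]^{-1}\Vert_F^2 = \sum_{j=1}^n \Lambda_j(\bfx)^{-2}$.

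The key (and essentially only non-bookkeeping) step is the elementary scalar estimate: for every $\lambda>0$, $\log\lambda\le\lambda-1$, and applying this to $1/\lambda$ gives $-\log\lambda = \log(1/\lambda)\le 1/\lambda-1$; hence $|\log\lambda|\le\max(\lambda,1/\lambda)$ and therefore $(\log\lambda)^2 \le \max(\lambda^2,\lambda^{-2}) \le \lambda^2+\lambda^{-2}$. Applying this with $\lambda = \Lambda_j(\bfx)$ and summing over $j$ yields $\Vert\bfG(\bfx)\Vert_F^2 \le \Vert\bfK_0(\bfx)\Vert_F^2 + \Vert[\bfK_0(\bfx)]^{-1}\Vert_F^2$ almost surely, for every $\bfx$ in $D$.

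Taking the expectation and invoking~(\ref{Eq10}) then gives $E\{\Vert\bfG(\bfx)\Vert_F^2\} \le E\{\Vert\bfK_0(\bfx)\Vert_F^2\} + E\{\Vert[\bfK_0(\bfx)]^{-1}\Vert_F^2\} < +\infty$ for every $\bfx$ in $D$, so $[\bfG]\in\curC^{2,S}_n$ and $[\bfK_0]=\exp_\MM([\bfG])$, as required. I expect no genuine obstacle here: the one place that needs care is the two-sided scalar bound controlling $(\log\lambda)^2$ simultaneously by $\lambda^2$ and $\lambda^{-2}$, which is exactly why both conditions in~(\ref{Eq10}) are needed; the passage between eigenvalue sums and Frobenius norms is immediate.
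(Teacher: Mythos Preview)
Your proof is correct and follows essentially the same approach as the paper: spectral decomposition of $[\bfK_0(\bfx)]$, reduction to the scalar inequality $(\log\lambda)^2 \le \lambda^2 + \lambda^{-2}$ for $\lambda>0$, and then taking expectations. The only cosmetic difference is that the paper argues eigenvalue by eigenvalue and splits the expectation at $\lambda=1$ (using $(\log\lambda)^2<\lambda^{-2}$ on $(0,1)$ and $(\log\lambda)^2<\lambda^{2}$ on $(1,\infty)$), whereas you combine the two cases into the single pointwise bound and obtain the clean almost-sure inequality $\Vert\bfG(\bfx)\Vert_F^2 \le \Vert\bfK_0(\bfx)\Vert_F^2 + \Vert[\bfK_0(\bfx)]^{-1}\Vert_F^2$ before taking expectations.
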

%
% end Lemma 3
%
\begin{proof}
For $\bfx$ fixed in $ D$, we use the spectral representation of  $[\bfK_0(\bfx)]$ introduced in the proof of Lemma \ref{lemma1}.
 The hypotheses introduced in Lemma \ref{lemma3} can be rewritten as
$E\{\Vert\bfK_0(\bfx)\Vert_F^2\}        = \sum_{j=1}^n E\{\Lambda_j(\bfx)^2 \} < + \infty$ and
$E\{\Vert[\bfK_0(\bfx)]^{-1}\Vert_F^2\}$ $ = $ $\sum_{j=1}^n E\{\Lambda_j(\bfx)^{-2} \} < + \infty$.
Similarly, it can easily be proven that
$E\{\Vert\bfG(\bfx)\Vert_F^2\}$ $= $ $\sum_{j=1}^n E\{ ( \log\Lambda_j(\bfx) )^2 \}$. Since $n$ is finite, the  proof of the lemma will be complete if we prove that, for all $j$, we have $E\{ ( \log\Lambda_j(\bfx) )^2 \} < +\infty$ knowing that
$E\{\Lambda_j(\bfx)^{-2} \} < +\infty$ and $E\{\Lambda_j(\bfx)^2 \}< +\infty$.
For $j$ and $\bfx$ fixed, let $P(d\lambda)$ be the probability distribution of the random variable $\Lambda_j(\bfx)$. We have
$E\{ ( \log\Lambda_j(\bfx) )^2 \} = \int_0^1 (\log\lambda)^2\, P(d\lambda) + \int_1^{+\infty} (\log\lambda)^2\, P(d\lambda)$. For $0 < \lambda < 1$, we have $(\log\lambda)^2 < \lambda^{-2}$, and for  $1 < \lambda < +\infty$, we have $(\log\lambda)^2 < \lambda^{2}$. It can then be deduced that
$E\{ ( \log\Lambda_j(\bfx) )^2 \} < \int_0^1 \lambda^{-2}\, P(d\lambda) + \int_1^{+\infty} \lambda^2\, P(d\lambda)$
$ < \int_0^{+\infty} \lambda^{-2}\, P(d\lambda) + \int_0^{+\infty} \lambda^2\, P(d\lambda)$
$ =  E\{\Lambda_j(\bfx)^{-2} \}  + E\{\Lambda_j(\bfx)^2 \} < +\infty$.
\end{proof}
\begin{rem} %Remark 2
The converse of Lemma \ref{lemma3} does not hold. If $[\bfG]$ is any random field in $\curC^{2,S}_n$, in general, the random field
$[\bfK_0] = \exp_\MM([\bfG])$ is not a second-order random field.
\end{rem}
\begin{proposition}\label{proposition1} % Proposition 1
Let $[\bfG]$ be a random field belonging to $\curC^{2,S}_n$ and let $[\bfK_0]$ be the random field belonging to
$\curC^+_n$ such that, for all $\bfx$ in $ D$,  $[\bfK_0(\bfx)] = \exp_\MM([\bfG(\bfx)])$. Then the following results hold:
\begin{enumerate}[(ii)]\renewcommand{\theenumi}{\roman{enumi}}
\item  For all $\bfx$ in $ D$,
\begin{align}
 & \Vert\bfK_0(\bfx)\Vert_F \leq \sqrt{n}\, e^{\Vert\bfG(\bfx)\Vert_F} \quad a.s\, ,                                            \label{Eq11} \\
 & E\{\Vert\bfK_0(\bfx)\Vert_F^2\} \leq n\, E\{e^{2\Vert\bfG(\bfx)\Vert_F}\}\, ,                                               \label{Eq12} \\
 & E\{\Vert\log_\MM[\bfK_0(\bfx)]\Vert_F^2\}  < + \infty \, .                                                                   \label{Eq13}
\end{align}
\item If $\Vert\bfG(\bfx)\Vert_F \leq \beta_G < +\infty$ almost surely, in which $\beta_G$ is a positive random variable independent of $\bfx$, then 
$\Vert\bfK_0(\bfx)\Vert_F \leq \beta_0 < +\infty$ almost surely, in which $\beta_0$ is the positive random variable, independent of $\bfx$, such that
$\beta_0= \sqrt{n}\, e^{\beta_G}$.
\end{enumerate}
\end{proposition}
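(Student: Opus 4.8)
The plan is to exploit the spectral representation $[\bfG(\bfx)] = [\bfPhi(\bfx)]\,[\bfmu(\bfx)]\,[\bfPhi(\bfx)]^T$ already introduced in Section~\ref{Section2.2.1}, where $[\bfmu(\bfx)]$ is the diagonal matrix of the (real) random eigenvalues $\mu_1(\bfx),\dots,\mu_n(\bfx)$ of $[\bfG(\bfx)]$ and $[\bfPhi(\bfx)]$ is orthogonal. Since the Frobenius norm is invariant under conjugation by orthogonal matrices, $\Vert\bfG(\bfx)\Vert_F^2 = \sum_{j=1}^n \mu_j(\bfx)^2$ and, because $[\bfK_0(\bfx)] = \exp_\MM([\bfG(\bfx)]) = [\bfPhi(\bfx)]\,\exp_\MM([\bfmu(\bfx)])\,[\bfPhi(\bfx)]^T$, also $\Vert\bfK_0(\bfx)\Vert_F^2 = \sum_{j=1}^n e^{2\mu_j(\bfx)}$.

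For part (i), the first step is Eq.~(\ref{Eq11}): from $\Vert\bfK_0(\bfx)\Vert_F^2 = \sum_{j=1}^n e^{2\mu_j(\bfx)}$ and the elementary bound $|\mu_j(\bfx)| \leq \bigl(\sum_k \mu_k(\bfx)^2\bigr)^{1/2} = \Vert\bfG(\bfx)\Vert_F$, we get $e^{2\mu_j(\bfx)} \leq e^{2\Vert\bfG(\bfx)\Vert_F}$ for each $j$, hence $\Vert\bfK_0(\bfx)\Vert_F^2 \leq n\, e^{2\Vert\bfG(\bfx)\Vert_F}$ a.s., which is exactly (\ref{Eq11}). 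Taking expectations of the squared inequality gives (\ref{Eq12}) immediately. For (\ref{Eq13}), note that $\log_\MM[\bfK_0(\bfx)] = [\bfG(\bfx)]$ by the reciprocity of $\log_\MM$ and $\exp_\MM$ on $\MM_n^+(\RR)$ (as recalled around Eq.~(\ref{Eq9})), so $E\{\Vert\log_\MM[\bfK_0(\bfx)]\Vert_F^2\} = E\{\Vert\bfG(\bfx)\Vert_F^2\} < +\infty$ because $[\bfG]$ is in $\curC^{2,S}_n$ by hypothesis.

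For part (ii), one simply chains the a.s.\ inequality (\ref{Eq11}) with the a.s.\ bound $\Vert\bfG(\bfx)\Vert_F \leq \beta_G$: since $t\mapsto e^t$ is increasing, $\Vert\bfK_0(\bfx)\Vert_F \leq \sqrt n\, e^{\Vert\bfG(\bfx)\Vert_F} \leq \sqrt n\, e^{\beta_G} =: \beta_0$ a.s., and $\beta_0$ is a positive random variable not depending on $\bfx$ because $\beta_G$ is.

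I do not expect any serious obstacle here: the whole argument is a diagonalization plus the orthogonal invariance of $\Vert\cdot\Vert_F$ and the scalar inequality $|\mu_j| \leq \Vert\bfG\Vert_F$. The only point requiring a modicum of care is measurability — making sure the eigenvalues $\mu_j(\bfx,\theta)$ can be chosen as genuine random variables so that statements like $E\{e^{2\mu_j(\bfx)}\}$ make sense — but this is routine (e.g.\ the eigenvalues of a symmetric matrix are continuous, in fact locally Lipschitz, functions of its entries, hence measurable), and in any case (\ref{Eq12}) can be obtained directly from the a.s.\ inequality (\ref{Eq11}) by monotonicity of the expectation without ever isolating individual eigenvalues. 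So the proof reduces to assembling these elementary steps in the order above.
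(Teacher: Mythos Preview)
Your proof is correct and is essentially the same spectral argument as the paper's, just phrased slightly differently: the paper routes the bound through the operator-norm inequality $\Vert\exp_\MM([\bfG(\bfx)])\Vert_2 \le e^{\Vert\bfG(\bfx)\Vert_2}$ together with $\Vert\cdot\Vert_F\le\sqrt{n}\,\Vert\cdot\Vert_2$ and $\Vert\cdot\Vert_2\le\Vert\cdot\Vert_F$, whereas you compute $\Vert\bfK_0(\bfx)\Vert_F^2=\sum_j e^{2\mu_j(\bfx)}$ directly and bound each $\mu_j$ by $\Vert\bfG(\bfx)\Vert_F$. The remaining steps (taking expectations for \eqref{Eq12}, using $\log_\MM[\bfK_0]=[\bfG]$ for \eqref{Eq13}, and chaining \eqref{Eq11} with the uniform bound for part~(ii)) match the paper exactly.
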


\begin{proof}
For (i), we have
$\Vert \exp_\MM(\bfG(\bfx))\Vert_{2} \leq  e^{\Vert\bfG(\bfx)\Vert_{2}}$. Since $[\bfK_0(\bfx)] = \exp_\MM(\bfG(\bfx))$ and since
$\Vert\bfK_0(\bfx)\Vert_F \leq \sqrt{n}\, \Vert\bfK_0(\bfx)\Vert_{2}$, it can then be deduced that
$\Vert\bfK_0(\bfx)\Vert_F \leq \sqrt{n}\, e^{\Vert\bfG(\bfx)\Vert_F}$ and then
$E\{\Vert\bfK_0(\bfx)\Vert_F^2\} \leq n\, E\{e^{2\Vert\bfG(\bfx)\Vert_F}\}$. Since $[\bfG]$ is a second-order random field, the inequality \eqref{Eq13}  is deduced from Eq.~(\ref{Eq9}).
For (ii), the proof is directly deduced from the inequality \eqref{Eq11}.
\end{proof}
\subsubsection{Square type representation of random field $[\bfK_0]$}
\label{Section2.2.2}

 Let $g\mapsto h(g;a)$ be a  given function from $\RR$ in $\RR^+$, depending on one positive real parameter $a$. For all fixed $a$,
it is assumed that:

\begin{enumerate}[(ii)]
\renewcommand{\theenumi}{\roman{enumi}}
\item  $h(.;a)$ is a strictly monotonically increasing function on $\RR$, which means that $h(g;a) < h(g';a)$ if $-\infty < g < g' < +\infty$;
\item there are  real numbers $0 < c_h < +\infty$ and   $0 < c_a< +\infty$, such that, for all $g$ in $\RR$, we have $h(g;a) \leq  c_a + c_h\, g^2$.
\end{enumerate}

\noindent It should be noted that $c_a \geq h(0,a)$. In addition, from (i)  it can be deduced that, for $g < 0$, we have $h(g;a) <  h(0,a) \leq  c_a  < c_a + c_h\, g^2$. Therefore, the inequality given in (ii), which is true for $g <0$, allows the behavior of the increasing function $g\mapsto h(g;a)$ to be controlled for $g > 0$. Finally, the introduced hypotheses imply that, for all $a >0$, $g\mapsto h(g;a)$ is a one-to-one mapping from $\RR$ onto $\RR^+$ and consequently, the reciprocity mapping, $v\mapsto h^{-1}(v;a)$, is a strictly monotonically increasing function from $\RR^+$ onto $\RR$.\\

\noindent The square type representation of random field $[\bfK_0]$ belonging to $\curC^{+}_n$ is then defined as follows.
For all $\bfx$ in $ D$,
\begin{equation}                                                                                                                               \label{Eq13bis}
[\bfK_0(\bfx)] = \LL([\bfG(\bfx)])  \, ,
\end{equation}
in which  $\{[\bfG(\bfx)],\bfx\in D\}$ is a random field belonging to $\curC^{2,S}_n$ and where $[G]\mapsto \LL([G])$ is a measurable mapping from
$\MM_n^{\rm S}(\RR)$  into $\MM_n^{+}(\RR)$ which is defined as follows. The matrix $[K_0]=\LL([G])\in \MM_n^{+}(\RR)$ is written as
\begin{equation}                                                                                                                               \label{Eq14}
[K_0] = [L]^T \, [L]  \, ,
\end{equation}
in which $[L]$ is an upper triangular $(n\times n)$ real matrix with positive diagonal, which is written as
\begin{equation}                                                                                                                               \label{Eq15}
[L] = \curL([G])  \, ,
\end{equation}
 where $[G] \mapsto \curL([G])$ is the measurable  mapping from $\MM_n^{\rm S}(\RR)$ into $\MM_n^{\rm U}(\RR)$ defined by
\begin{align}                                                                                                                               \label{Eq16}
&[\curL([G])]_{jj'}  = [G]_{jj'} \,\, , \,\, 1 \leq j < j' \leq n \, ,
%\end{equation}
%
%
%\begin{equation}                                                                                                                              
\\
 \label{Eq17}
&[\curL([G])]_{jj}  = \sqrt{h([G]_{jj};a_j)} \,\, , \,\, 1 \leq j \leq n \, ,
\end{align}
in which $a_1,\ldots , a_n$ are positive real numbers.

 If $[\bfK_0]$ is any random field given in $\curC^+_n$, then there exists a unique random field $[\bfG]$ with values in the $\MM^{\rm S}_n(\RR)$ such that, for all $\bfx$ in $ D$,
\begin{equation}                                                                                                                               \label{Eq17bis}
[\bfG(\bfx)] = \LL^{-1}([\bfK_0(\bfx)])  \, ,
\end{equation}
in which $\LL^{-1}$ is the reciprocity function of $\LL$, from  $\MM_n^{+}(\RR)$ into $\MM_n^{\rm S}(\RR)$, which is explicitly defined as follows.
For all $1 \leq j \leq j' \leq n$,
\begin{equation}                                                                                                                               \label{Eq17ter}
[\bfG(\bfx)]_{jj'} = [\curL^{-1}([\bfL(\bfx)])]_{jj'}  \quad , \quad [\bfG(\bfx)]_{j'j} = [\bfG(\bfx)]_{jj'} \, .
\end{equation}
in which $[L] \mapsto \curL^{-1}([L])$ is the unique reciprocity mapping of $\curL$ (due to the existence of $v\mapsto h^{-1}(v;a)$) defined on $\MM_n^{\rm U}(\RR)$, and where $[\bfL(\bfx)]$ follows from the Cholesky factorization of random matrix $[\bfK_0(\bfx)] = [\bfL(\bfx)]^T\,[\bfL(\bfx)] $ (see Eqs.~\eqref{Eq13bis} and \eqref{Eq14}).

\paragraph*{Example of function $h$}
Let us give an example which shows that there exists at least one such a construction. For instance, we can choose  $h=h^\APM$, in which the function $h^\APM$ is defined in \cite{Soize2006} as follows. Let be $s=\delta/\sqrt{n+1}$ in which  $\delta$ is a parameter  such that $0 < \delta < \sqrt{(n+1)/(n-1)}$ and which allows the statistical fluctuations level to be controlled. Let be  $a_j = 1/(2\,s^2) + (1-j)/2 > 0$ and  $h^\APM(g;a)  = 2\, s^2\, F^{-1}_{\Gamma_a}(F_W(g/s))$ with $F_W(\widetilde w) =\int_{-\infty}^{\widetilde w} \frac{1}{\sqrt{2\pi}} \, \exp(- \frac{1}{2}w^2)\, dw$ and  $F^{-1}_{\Gamma_a}(u) = \gamma$ the reciprocal function such that $F_{\Gamma_a}(\gamma) = u$ with $F_{\Gamma_a}(\gamma) = \int_0^\gamma \frac{1}{\Gamma(a)}\, t^{a-1}\, e^{-t}\, dt$ and
$\Gamma(a) =\int_0^{+\infty}  t^{a-1}\, e^{-t}\, dt$. Then, for all $j=1,\ldots , n$, it can be proven \cite{Soize2006} that $g\mapsto h^\APM(g;a_j)$ is a strictly monotonically increasing function from $\RR$ into $\RR^+$ and there are positive real numbers $c_h$ and $c_{a_j}$ such that, for all $g$ in $\RR$, we have $h^\APM(g;a_j) \leq c_{a_j} + c_h\, g^2$. In addition, it can easily be seen that the reciprocity function is written as ${h^\APM}^{-1}(v;a)  = s \, F_W^{-1}( F_{\Gamma_a}(v/(2s^2))$.
\begin{proposition}\label{proposition2} % Proposition 2
Let $[\bfG]$ be a random field belonging to $\curC^{2,S}_n$ and let $[\bfK_0]$ be the random field belonging to
$\curC^+_n$ such that, for all $\bfx$ in $ D$,  $[\bfK_0(\bfx)] = \LL([\bfG(\bfx)])$. We then have the following results:
\begin{enumerate}[(ii)]\renewcommand{\theenumi}{\roman{enumi}}
\item There exist two real numbers $0 < \gamma_0 < +\infty$ and $0 < \gamma_1 < +\infty$ such that, for all $\bfx$ in $ D$, we have
\begin{align}
 & \Vert\bfK_0(\bfx)\Vert_F \leq \gamma_0 + \gamma_1\Vert \bfG(\bfx)\Vert_F^2 \quad a.s\, ,                                         \label{Eq18} \\
 & E\{\Vert\bfK_0(\bfx)\Vert_F\} < +\infty\, ,                                                                                      \label{Eq19} \\
 & \hbox{If} \,\, E\{\Vert\bfG(\bfx)\Vert_F^4\} < +\infty\, , \,\, \hbox{then} \,\, E\{\Vert\bfK_0(\bfx)\Vert_F^2\} < +\infty \, .     \label{Eq20}
\end{align}
\item If $\Vert\bfG(\bfx)\Vert_F \leq \beta_G < +\infty$ a.s, in which $\beta_G$ is a positive random variable independent of $\bfx$, we then have
$\Vert\bfK_0(\bfx)\Vert_F \leq \beta_0 < +\infty$ almost surely, in which $\beta_0$ is the positive random variable, independent of $\bfx$, such that
$\beta_0= \gamma_0 +\gamma_1\,\beta_G^2$.
\end{enumerate}
\end{proposition}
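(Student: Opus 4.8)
The plan is to trace the bound through the chain of definitions $[\bfK_0(\bfx)] = [\bfL(\bfx)]^T\,[\bfL(\bfx)]$ with $[\bfL(\bfx)] = \curL([\bfG(\bfx)])$, exploiting the sub-multiplicativity of the Frobenius norm and the explicit componentwise description of $\curL$ in Eqs.~\eqref{Eq16}--\eqref{Eq17}. For part (i), the first step is to estimate $\Vert\bfL(\bfx)\Vert_F^2$. Writing out the Frobenius norm as a sum over entries, the strictly-upper-triangular entries $[\curL([\bfG(\bfx)])]_{jj'} = [\bfG(\bfx)]_{jj'}$ for $j<j'$ contribute $\sum_{j<j'}[\bfG(\bfx)]_{jj'}^2 \leq \Vert\bfG(\bfx)\Vert_F^2$, while the diagonal entries contribute $\sum_j [\curL([\bfG(\bfx)])]_{jj}^2 = \sum_j h([\bfG(\bfx)]_{jj};a_j)$. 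Applying hypothesis (ii) on $h$, each term is bounded by $c_{a_j} + c_h\,[\bfG(\bfx)]_{jj}^2 \leq c_{a_j} + c_h\,\Vert\bfG(\bfx)\Vert_F^2$, so $\sum_j h([\bfG(\bfx)]_{jj};a_j) \leq (\sum_j c_{a_j}) + n\,c_h\,\Vert\bfG(\bfx)\Vert_F^2$. Combining, $\Vert\bfL(\bfx)\Vert_F^2 \leq (\sum_j c_{a_j}) + (1 + n\,c_h)\,\Vert\bfG(\bfx)\Vert_F^2$ almost surely.

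Next I would pass to $[\bfK_0(\bfx)]$ via $\Vert\bfK_0(\bfx)\Vert_F = \Vert[\bfL(\bfx)]^T[\bfL(\bfx)]\Vert_F \leq \Vert[\bfL(\bfx)]^T\Vert_F\,\Vert\bfL(\bfx)\Vert_F = \Vert\bfL(\bfx)\Vert_F^2$. Substituting the previous bound immediately gives Eq.~\eqref{Eq18} with $\gamma_0 = \sum_j c_{a_j}$ and $\gamma_1 = 1 + n\,c_h$, both independent of $\bfx$ and positive. Then Eq.~\eqref{Eq19} follows by taking expectations in Eq.~\eqref{Eq18} and using that $[\bfG]\in\curC^{2,S}_n$ means $E\{\Vert\bfG(\bfx)\Vert_F^2\}<+\infty$, so $E\{\Vert\bfK_0(\bfx)\Vert_F\} \leq \gamma_0 + \gamma_1\,E\{\Vert\bfG(\bfx)\Vert_F^2\} < +\infty$. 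For Eq.~\eqref{Eq20}, square Eq.~\eqref{Eq18}, use $(\gamma_0+\gamma_1 t)^2 \leq 2\gamma_0^2 + 2\gamma_1^2 t^2$ with $t = \Vert\bfG(\bfx)\Vert_F^2$, take expectations, and invoke the added hypothesis $E\{\Vert\bfG(\bfx)\Vert_F^4\}<+\infty$.

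For part (ii), I would simply substitute the almost-sure bound $\Vert\bfG(\bfx)\Vert_F \leq \beta_G$ into Eq.~\eqref{Eq18}: this yields $\Vert\bfK_0(\bfx)\Vert_F \leq \gamma_0 + \gamma_1\,\beta_G^2 =: \beta_0$ almost surely, and $\beta_0$ is a positive random variable independent of $\bfx$ since $\gamma_0$, $\gamma_1$ are deterministic constants and $\beta_G$ is independent of $\bfx$.

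There is no serious obstacle here; the proof is entirely a matter of chaining elementary norm inequalities. The only point requiring a little care is the submultiplicativity step $\Vert AB\Vert_F \leq \Vert A\Vert_F\Vert B\Vert_F$ together with $\Vert A^T\Vert_F = \Vert A\Vert_F$ — standard facts — and making sure the constants $\gamma_0,\gamma_1$ are genuinely independent of $\bfx$, which holds because the $c_{a_j}$, $c_h$, and $n$ in hypothesis (ii) on $h$ do not depend on $\bfx$. Essentially all the content is already packaged in hypothesis (ii) on $h$ and in the definition of $\curL$; the proposition is the routine propagation of that polynomial growth bound through the quadratic map.
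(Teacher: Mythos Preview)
Your proof is correct and follows essentially the same route as the paper: bound $\Vert\bfK_0(\bfx)\Vert_F \leq \Vert\bfL(\bfx)\Vert_F^2$, expand $\Vert\bfL(\bfx)\Vert_F^2$ entrywise using Eqs.~\eqref{Eq16}--\eqref{Eq17} and hypothesis~(ii) on $h$, then take expectations. The only difference is in bookkeeping the constants: the paper obtains the slightly sharper $\gamma_1 = \max\{c_h,1/2\}$ by grouping $c_h\sum_j [\bfG(\bfx)]_{jj}^2 + \sum_{j<j'}[\bfG(\bfx)]_{jj'}^2 \leq \max\{c_h,1/2\}\,\Vert\bfG(\bfx)\Vert_F^2$ (using symmetry of $[\bfG(\bfx)]$), whereas your $\gamma_1 = 1 + n\,c_h$ is looser but perfectly adequate since the proposition only asserts existence of finite constants.
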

\begin{proof} % Proof of proposition 2
(i) Since $[\bfK_0(\bfx)] = [\bfL(\bfx)]^T\, [\bfL(\bfx)]$, we have $\Vert\bfK_0(\bfx)\Vert_F\leq \Vert\bfL(\bfx)\Vert_F^2$ with
$\Vert\bfL(\bfx)\Vert_F^2 = \sum_j h([\bfG(\bfx)]_{jj}; a_j) +\sum_{j<j'}[\bfG(\bfx)]_{jj'}^2$.
Therefore, $\Vert\bfK_0(\bfx)\Vert_F\leq \sum_j c_{a_j} +\max\{c_h, 1/2\}$ $\, \sum_{j,j'}[\bfG(\bfx)]_{jj'}^2$ which yields inequality \eqref{Eq18} with
$\gamma_0 = \sum_j c_{a_j}$ and $\gamma_1 =\max\{c_h, 1/2\}$. Taking the mathematical expectation yields $E\{\Vert\bfK_0(\bfx)\Vert_F\} \leq
\gamma_0 + \gamma_1 E\{\Vert \bfG(\bfx)\Vert_F^2\} < +\infty$ because $[\bfG]$ is a second-order random field. Taking the square and then the mathematical expectation of Eq.~(\ref{Eq18}) yields Eq.~(\ref{Eq20}).
\\(ii) The proof is directly deduced from Eq.~(\ref{Eq18}).
\end{proof}
\subsection{Representation of any random field $[\bfG]$ in $\curC^{2,S}_n$}
\label{Section2.3}

As previously explained, we are interested in the identification of the random field $[\bfK]$ which belongs to  $\curC^+_n$, by solving a statistical inverse problem related to a stochastic boundary value problem. Such an identification is carried out using the proposed representation of $[\bfK]$ (defined by Eq.~(\ref{Eq2})) as a function of the random field $[\bfK_0]$ which is written either as $[\bfK_0(\bfx)] = \exp_\MM([\bfG(\bfx)])$ (see Eq.~(\ref{Eq8})) or as $[\bfK_0(\bfx)] = [\curL([\bfG(\bfx)])]^T \, [\curL([\bfG(\bfx)])]$ (see Eqs.~(\ref{Eq14}) and (\ref{Eq15})). In these two representations, $[\bfG]$ is any random field in $\curC^{2,S}_n$, which has to be identified (instead of $[\bfK]$ in $\curC^+_n$). Consequently, we have to construct a representation of any random field $[\bfG]$ in $\curC^{2,S}_n$. Since any $[\bfG]$ in $\curC^{2,S}_n$ is a second-order random field, a general representation of $[\bfG]$, adapted to its identification, is the polynomial chaos expansion for which the coefficients of the expansion constitutes a family of functions from $ D$ into $\MM_n^{\rm S}(\RR)$.\par

 It is well known that a direct identification of such a family of matrix-valued functions cannot easily be done. An adapted representation must be introduced consisting in choosing a deterministic vector basis, then representing $[\bfG]$ on this deterministic vector basis and finally, performing a polynomial chaos expansion of the random coefficients on this deterministic vector basis. The representation is generally in high stochastic dimension. Consequently, the identification of a very large number of coefficients must be done. It is then interesting to use a statistical reduction and thus to choose, for the deterministic vector basis, the  Karhunen-Lo\`eve vector basis. Such a vector basis is constituted of the family of the eigenfunctions of the compact covariance operator of random field $[\bfG]$.
\subsubsection{Covariance operator of random field $[\bfG]$ and eigenvalue problem}
\label{Section2.3.1}

Let $[G_0(\bfx)] = E\{ [\bfG(\bfx)]\}$ be the mean function of $[\bfG]$, defined on $ D$ with values in $\MM_n^{\rm S}(\RR)$.
The covariance function of random field $[\bfG]$ is the function $(\bfx,\bfx')\mapsto C_\bfG(\bfx,\bfx')$, defined on $ D\times D$, with values in the space $\MM_n^{\rm S}(\RR)\otimes\MM_n^{\rm S}(\RR)$ of fourth-order tensors, such that
\begin{equation}                                                                                                                               \label{Eq21}
C_\bfG(\bfx,\bfx') = E\{ ([\bfG(\bfx)] - [G_0(\bfx)] ) \otimes ([\bfG(\bfx')] - [G_0(\bfx')] ) \} \, .
\end{equation}
It is assumed that $[G_{0}]$ is a uniformly bounded function on  $ D$, i.e.
\begin{equation}
\Vert G_{0}\Vert_{\infty} := \textrm{ess sup}_{\bfx\in D} \Vert G_0(\bfx) \Vert_F  < +\infty
\end{equation}
 and that function $C_\bfG$ is square integrable on $D\times D$. Consequently, the covariance operator $\hbox{Cov}_\bfG$, defined by the kernel $C_\bfG$, is a Hilbert-Schmidt, symmetric, positive operator in the Hilbert space $L^2(D,\MM_n^{\rm S}(\RR))$ equipped with the inner product,
\begin{equation}                                                                                                                               \label{Eq22}
\ll [G_i] \, , [G_j]\gg = \int_D  \trmat\{[G_i(\bfx)]^T \, [G_j(\bfx)]\}\,d\bfx \, ,
\end{equation}
and the associated norm $ |||[G_i] ||| = \ll [G_i] \, , [G_i]\gg^{1/2}$.
The eigenvalue problem related to the covariance operator $\hbox{Cov}_\bfG$, consists in finding the family $\{[G_i(\bfx)] ,\bfx\in D\}_{i\geq 1}$ of the normalized eigenfunctions with values in $\MM_n^{\rm S}(\RR)$ and the associated positive eigenvalues $\{\sigma_i\}_{i\geq 1}$ with $\sigma_1\geq \sigma_2\geq \ldots \rightarrow 0$ and
$\sum_{i=1}^{+\infty} \sigma_i^2 < +\infty$, such that
\begin{equation}                                                                                                                               \label{Eq23}
\int_D C_\bfG(\bfx,\bfx') :  [G_i(\bfx')] \, d\bfx' =\sigma_i \, [G_i(\bfx)]  \ , \quad i\geq 1 \, ,
\end{equation}
in which $\{C_\bfG(\bfx,\bfx') :  [G_i(\bfx')]\}_{k\ell} = \sum_{k'\ell'} \{C_\bfG(\bfx,\bfx')\}_{k\ell k'\ell'}\, \{[G_i(\bfx')]\}_{k'\ell'}$.
The normalized family $\{[G_i(\bfx)] ,\bfx\in D\}_{i\geq 1}$ is a Hilbertian basis of $L^2(D,\MM_n^{\rm S}(\RR))$ and consequently,
$\ll [G_i] \, , [G_j]\gg = \delta_{ij}$. It can easily be verified that
\begin{equation}                                                                                                                               \label{Eq24}
E\{\Vert \bfG(\bfx)\Vert^2_F\} = \Vert G_0(\bfx) \Vert^2_F + \sum_{i=1}^{+\infty} \sigma_i\, \Vert G_i(\bfx) \Vert^2_F < +\infty \ , \quad  \forall\bfx \in D \, ,
\end{equation}
\begin{equation}                                                                                                                               \label{Eq25}
\int_D E\{\Vert \bfG(\bfx)\Vert^2_F\}\, d\bfx = |||G_0|||^2 + \sum_{i=1}^{+\infty} \sigma_i < +\infty  \, .
\end{equation}
We now give some properties which will be useful later.
Since $C_\bfG$ is a covariance function, we have $||C_\bfG(\bfx,\bfx')||^2 \leq \trmat\{C_\bfG(\bfx,\bfx) \} \times$ $\trmat\{C_\bfG(\bfx',\bfx') \}$ in which $||C_\bfG(\bfx,\bfx')||^2 = \sum_{k \ell k'\ell'} \{C_\bfG(\bfx,\bfx')\}_{k\ell k'\ell'}^2$ and
$\trmat\{C_\bfG(\bfx,\bfx) \}$ $ = \sum_{k \ell} \{C_\bfG(\bfx,\bfx)\}_{k\ell k\ell}$ $\geq 0$. We can then deduce the following Lemma.
\begin{lemma}\label{lemma4} %Lemma 4
If $\bfx\mapsto\trmat\{C_\bfG(\bfx,\bfx) \}$ is integrable on $D$, then $C_\bfG$ is square integrable on $D\times D$.
If $\bfx\mapsto\trmat\{C_\bfG(\bfx,\bfx) \}$ is bounded on $ D$, then $C_\bfG$ is bounded on $ D\times D$ (and thus square integrable on $D\times D$) and the eigenfunctions $\bfx \mapsto [G_i(\bfx)]$ are bounded functions from $ D$ into $\MM_n^{\rm S}(\RR)$. For $i\geq 1$, we then have $\Vert G_i \Vert_\infty = \textrm{ess sup}_{\bfx\in D} \Vert G_i(\bfx) \Vert_F  < +\infty$.
\end{lemma}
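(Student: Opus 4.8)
The plan is to derive all three assertions from the pointwise Cauchy--Schwarz bound
\[
\Vert C_\bfG(\bfx,\bfx')\Vert^2 \;\le\; \trmat\{C_\bfG(\bfx,\bfx)\}\,\trmat\{C_\bfG(\bfx',\bfx')\}
\]
recorded just before the statement, together with the elementary contraction inequality $\Vert C:G\Vert_F \le \Vert C\Vert\,\Vert G\Vert_F$, valid for any fourth-order tensor $C$ in $\MM_n^{\rm S}(\RR)\otimes\MM_n^{\rm S}(\RR)$ and any $G$ in $\MM_n^{\rm S}(\RR)$; the latter follows in one line by applying Cauchy--Schwarz to $\{C:G\}_{k\ell}=\sum_{k'\ell'}C_{k\ell k'\ell'}G_{k'\ell'}$ and summing over the free indices $k,\ell$.

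For the first assertion, I would integrate the displayed bound over $D\times D$ and use Tonelli's theorem to obtain
\[
\int_D\!\!\int_D \Vert C_\bfG(\bfx,\bfx')\Vert^2\,d\bfx\,d\bfx' \;\le\; \Bigl(\int_D \trmat\{C_\bfG(\bfx,\bfx)\}\,d\bfx\Bigr)^2 ,
\]
which is finite under the integrability hypothesis; hence $C_\bfG$ is square integrable on $D\times D$. For the second assertion, if $\trmat\{C_\bfG(\bfx,\bfx)\}\le M$ on $D$, the same pointwise bound gives $\Vert C_\bfG(\bfx,\bfx')\Vert\le M$ on $D\times D$, so $C_\bfG$ is bounded; since $D$ is a bounded open domain, $\vert D\vert<+\infty$ and a bounded kernel is automatically square integrable.

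For the eigenfunctions, I would fix $i\ge1$ (so that $\sigma_i>0$), divide the eigenvalue equation~(\ref{Eq23}) by $\sigma_i$, take Frobenius norms, and combine $\Vert C:G\Vert_F\le\Vert C\Vert\,\Vert G\Vert_F$ with $\Vert C_\bfG(\bfx,\bfx')\Vert\le M$ to get, for a.e. $\bfx$ in $D$,
\[
\Vert G_i(\bfx)\Vert_F \;\le\; \frac{1}{\sigma_i}\int_D \Vert C_\bfG(\bfx,\bfx')\Vert\,\Vert G_i(\bfx')\Vert_F\,d\bfx' \;\le\; \frac{M}{\sigma_i}\int_D \Vert G_i(\bfx')\Vert_F\,d\bfx' .
\]
Applying Cauchy--Schwarz in $L^2(D)$ and using the normalization $|||G_i|||=1$ of the Karhunen-Lo\`eve basis, the last integral is at most $\vert D\vert^{1/2}|||G_i|||=\vert D\vert^{1/2}$, so $\Vert G_i(\bfx)\Vert_F\le M\,\vert D\vert^{1/2}/\sigma_i$ for a.e. $\bfx$, whence $\Vert G_i\Vert_\infty\le M\,\vert D\vert^{1/2}/\sigma_i<+\infty$. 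Equivalently, the right-hand side of the rescaled equation~(\ref{Eq23}) provides an explicit everywhere-defined bounded representative of $[G_i]$.

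I do not expect a genuine obstacle here: the estimates are routine. The only points requiring a little care are the one-line justification of the contraction inequality $\Vert C:G\Vert_F\le\Vert C\Vert\,\Vert G\Vert_F$ and the bookkeeping that keeps the fourth-order tensor norm $\Vert\cdot\Vert$ distinct from the matrix Frobenius norm $\Vert\cdot\Vert_F$, together with remembering to invoke the normalization $|||G_i|||=1$ to close the bound for the eigenfunctions.
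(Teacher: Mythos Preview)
Your proposal is correct and follows exactly the route the paper indicates: the paper does not give an explicit proof of this lemma but simply records the Cauchy--Schwarz bound $\Vert C_\bfG(\bfx,\bfx')\Vert^2 \le \trmat\{C_\bfG(\bfx,\bfx)\}\,\trmat\{C_\bfG(\bfx',\bfx')\}$ and states that the lemma ``can then be deduced.'' Your argument is precisely the natural completion of that sketch --- integrating the bound for square integrability, taking the supremum for boundedness, and bootstrapping the eigenvalue equation~(\ref{Eq23}) together with the normalization $|||G_i|||=1$ to bound the eigenfunctions --- so there is nothing to add.
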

Using additional assumptions about the kernel $C_\bfG$, more results concerning the decrease rate of eigenvalues $\{\sigma_i\}_{i\geq 1}$ can be obtained.
\begin{lemma} \label{lemma5} %Lemma 5
We have the following results for $d=1$ and for $d\geq2$.
\begin{enumerate}%\renewcommand{\theenumi}{\arabic{enumi}}
\item[(a)]
For $d=1$, it is proven (see Theorem 9.1 of Chapter II of \cite{Zabreyko1975}) that, if for a given integer $\mu \geq 1$ and for all integer $\alpha$ such that $1\leq \alpha\leq \mu$, the functions $(x,x')\mapsto \partial^\alpha C_\bfG(x,x')/ \partial {x'}^\alpha$ are bounded functions on $ D\times D$, then $\sum_{i=1}^{+\infty} \sigma_i^{\zeta + 2/(2\mu +1)} < +\infty$ for any $\zeta >0$.

\item[(b)]
For $d\geq2$ (finite integer), a useful result is given by Theorem 4 of \cite{Kuhn1987}. Let us assume that $ D$ has a sufficiently smooth boundary $\partial D$.  For $\bfx$ in $D$, let $\bfh$ be such that $\bfx+\bfh$ and $\bfx+2 \bfh$ belong to $D$.  We then define the difference operator $\Delta_\bfh$ such that, for all $\bfx'$ fixed in $ D$, $\Delta_\bfh C_\bfG(\bfx,\bfx')= C_\bfG(\bfx +\bfh,\bfx')-C_\bfG(\bfx,\bfx')$ and $\Delta_\bfh^2$ is defined as the second iterate of $\Delta_\bfh$. For all $\bfx'$ fixed in $ D$, for a given positive integer $\mu$ such that $\mu \geq 1$ and for a given real $\zeta$ such that $0 < \zeta \leq 1$, let be
\begin{equation}
\nonumber
\Vert C_\bfG(\cdot,\bfx')\Vert_{\mu} = \!\!  \sum_{\vert\bfalpha\vert \leq \mu} \, \sup_{\bfx\in D}\Vert\curD^\bfalpha C_\bfG(\bfx,\bfx')\Vert
+ \sum_{\vert\bfalpha\vert =  \mu} \, \sup_{\bfx,\bfh} \frac {\Vert \Delta_\bfh^2 \curD^\bfalpha C_\bfG(\bfx,\bfx')\Vert}{\Vert \bfh\Vert^{\mu+\zeta}} \, ,
\end{equation}
in which  $\curD^\bfalpha C_\bfG(\bfx,\bfx')= \partial^{\vert\bfalpha\vert} C_\bfG(\bfx,\bfx') / \partial x_1^{\alpha_1}\ldots \partial x_d^{\alpha_d}$ with $\vert\bfalpha\vert = \alpha_1 + \ldots + \alpha_d$. Therefore, if $C_\bfG$ is continuous on $ D\times D$ such that
%
%\begin{equation}
%\nonumber
$\sup_{\bfx'\in D} \Vert C_\bfG(\cdot,\bfx')\Vert_{\mu} < +\infty,$
%\end{equation}
%
we then have the following decrease of eigenvalues:
%
%\begin{equation}
%\nonumber
$\sigma_i = O( i^{-1 -(\mu+\zeta)/d}).$
%\end{equation}
%
\item[(c)]
For $d\geq2$ (finite integer), a similar result to (b) in Lemma \ref{lemma5} can be found in \cite{Menegatto2012} under hypotheses weaker than those introduced above, but in practice, it seems much more difficult to verify these hypotheses for a given kernel $C_\bfG$.
\end{enumerate}
\end{lemma}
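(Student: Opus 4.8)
The plan is to read Lemma~\ref{lemma5} as the transcription, into the present matrix-valued setting, of three classical estimates for the eigenvalues of Hilbert--Schmidt integral operators, so that each of the three items follows by invoking the corresponding cited result once applicability is checked. Recall from Section~\ref{Section2.3.1} that $\hbox{Cov}_\bfG$ is the integral operator on $L^2(D,\MM_n^{\rm S}(\RR))$ with kernel $C_\bfG$ of Eq.~(\ref{Eq21}); it is compact, self-adjoint and positive, and by Lemma~\ref{lemma4} (integrability, a fortiori boundedness, of $\bfx\mapsto\trmat\{C_\bfG(\bfx,\bfx)\}$ together with boundedness of $D$) it is Hilbert--Schmidt, with summable squared eigenvalues. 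The first and only genuinely structural step is to remove the matrix indices: fixing an orthonormal basis of $\MM_n^{\rm S}(\RR)$ for the Frobenius inner product identifies this space with $\RR^N$, $N=n(n+1)/2$, hence $L^2(D,\MM_n^{\rm S}(\RR))\simeq L^2(D;\RR^N)\simeq L^2(D\times\{1,\dots,N\})$, under which $\hbox{Cov}_\bfG$ becomes a scalar integral operator on a bounded (possibly disconnected) domain whose kernel is assembled from the $N^2$ scalar components of the fourth-order tensor $C_\bfG$. This unitary equivalence does not change the spectrum, and every smoothness or H\"older-type hypothesis stated below on $C_\bfG$ passes componentwise to the scalarized kernel (up to the fixed constant $N$), so it suffices to quote the scalar theorems.

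For item~(a), with $d=1$: after scalarization, the assumption that $(x,x')\mapsto\partial^\alpha C_\bfG(x,x')/\partial {x'}^\alpha$ is bounded on $D\times D$ for all $1\le\alpha\le\mu$ is exactly the one-variable smoothness hypothesis of Theorem~9.1 of Chapter~II of \cite{Zabreyko1975}; applying that theorem to the scalarized kernel yields $\sum_{i\ge1}\sigma_i^{\,\zeta+2/(2\mu+1)}<+\infty$ for every $\zeta>0$, which is the stated conclusion.

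For item~(b), with $d\ge2$: the relevant function class is the H\"older--Zygmund class measured by the seminorm $\Vert C_\bfG(\cdot,\bfx')\Vert_\mu$ --- the supremum over $\bfx$ of $\curD^\bfalpha C_\bfG$ up to order $\mu$ plus the weighted second-difference seminorm of the top-order derivatives --- required to be finite uniformly in $\bfx'$. Assuming $\partial D$ smooth enough that the extension/covering hypotheses underlying the cited result hold, Theorem~4 of \cite{Kuhn1987} applied to the scalarized kernel gives the eigenvalue decay $\sigma_i=O(i^{-1-(\mu+\zeta)/d})$. Item~(c) needs nothing beyond citing \cite{Menegatto2012}, which establishes a decay of the same order under weaker (but, as noted, harder to verify) regularity assumptions on $C_\bfG$.

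The argument introduces no new estimate; the only point requiring care --- the main obstacle --- is checking that the hypotheses as phrased here in our notation (a matrix/tensor-valued kernel, the seminorms $\Vert C_\bfG(\cdot,\bfx')\Vert_\mu$, the regularity demanded of $\partial D$) really do meet the precise assumptions under which \cite{Zabreyko1975,Kuhn1987,Menegatto2012} are proved, and in particular that the reduction to a scalar kernel on a bounded domain is legitimate (finite-dimensional fibres, a Frobenius-orthonormal frame, preservation of the relevant Besov/H\"older regularity of the kernel). Everything else is bookkeeping.
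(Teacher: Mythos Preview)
The paper gives no proof of Lemma~\ref{lemma5}: the lemma is simply a collection of eigenvalue-decay estimates quoted from \cite{Zabreyko1975}, \cite{Kuhn1987} and \cite{Menegatto2012}, with the citations embedded in the statement itself. Your proposal therefore supplies strictly more than the paper does, and what you supply is the right bridge: the reduction of the matrix-valued kernel to a scalar one via an orthonormal basis of $\MM_n^{\rm S}(\RR)$, so that the cited scalar theorems apply with unchanged spectrum. This vectorization is in fact exactly what the paper sketches in the ``Comment about the eigenvalue problem'' paragraph immediately following the lemma (the map $\curG$ and the equivalent eigenproblem \eqref{Eq27}), though the paper presents it as a computational remark rather than as the justification that the scalar results transfer. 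Your reading and your proof sketch are correct.
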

%% end Lemma 5%%%

%
\paragraph*{Comment about the eigenvalue problem}
Any symmetric $(n\times n)$ real matrix $[G]$ can be represented by a real vector $\bfw$ of dimension $n_\bfw=n(n+1)/2$ such that $[G] = \curG(\bfw)$ in which $\curG$ is the one-to-one linear mapping from $\RR^{n_\bfw}$ in $\MM_n^{\rm S}(\RR)$, such that, for $1\leq i\leq j \leq n$, one has $[G]_{ij} = [G]_{ji}= w_k$ with $k=i +j(j-1)/2$. Let $\curL^{2}_{n_\bfw}$ be  the set of all the $\RR^{n_\bfw}$-valued second-order random fields $\{\bfW(\bfx), \bfx\in D\}$ defined on probability space $(\Theta,\curT,\curP)$. Consequently, any random field $[\bfG]$ in $\curC^{2,S}_n$ can be written as
\begin{equation}                                                                                                                               \label{Eq26}
[\bfG] = \curG(\bfW), \quad \bfW = \curG^{-1}([\bfG])  ,
\end{equation}
in which $\bfW$ is a random field in $\curL^{2}_{n_\bfw}$. The eigenvalue problem defined by Eq.~(\ref{Eq23}) can be rewritten as
\begin{equation}                                                                                                                               \label{Eq27}
\int_D [C_\bfW(\bfx,\bfx')] \, \bfw^i(\bfx') \, d\bfx' =\sigma_i \, \bfw^i(\bfx)  \  , \quad i\geq 1 ,
\end{equation}
in which $[C_\bfW(\bfx,\bfx')] = E\{(\bfW(\bfx) - \bfw^0(\bfx)) (\bfW(\bfx') - \bfw^0(\bfx'))^T\}$ and where
$\bfw^0 = \curG^{-1}([G_0])$. For $i\geq 1$, the eigenfunctions $[G_i]$ are then given by $[G_i] = \curG(\bfw^i)$.
\subsubsection{Chaos representation of random field $[\bfG]$}
\label{Section2.3.2}

Under the hypotheses introduced in Section~\ref{Section2.3.1},  random field $[\bfG]$ admits the following Karhunen-Lo\`eve decomposition,
\begin{equation}                                                                                                                               \label{Eq28}
[\bfG(\bfx)] = [G_0(\bfx)] + \sum_{i=1}^{+\infty} \sqrt{\sigma_i}\, [G_i(\bfx)] \, \eta_i\, ,
\end{equation}
in which $\{\eta_i\}_{i\geq 1}$ are uncorrelated random variables with zero mean and unit variance. The second-order random variables  $\{\eta_i\}_{i\geq 1}$ are represented using the following polynomial chaos expansion
\begin{equation}                                                                                                                               \label{Eq29}
\eta_i =  \sum_{j=1}^{+\infty} y_i^j \, \Psi_j(\{\Xi_k\}_{k\in\NN}) \, ,
\end{equation}
in which $\{\Xi_k\}_{k\in\NN}$ is a countable set of independent normalized Gaussian random variables
and where $\{\Psi_j\}_{j\geq 1}$ is the polynomial chaos basis composed of normalized multivariate Hermite
polynomials such that $E\{\Psi_j(\{\Xi_k\}_{k\in\NN})\, \Psi_{j'}(\{\Xi_k\}_{k\in\NN})\}$ $ = \delta_{jj'}$.  Since $E\{\eta_i\eta_{i'}\} = \delta_{ii'}$, it can be deduced that
\begin{equation}                                                                                                                               \label{Eq30}
 \sum_{j=1}^{+\infty} y_i^j \, y_{i'}^j  = \delta_{ii'}\, .
\end{equation}

\subsection{Random upper bound for random field  $[\bfK]$}
\label{Section2.4}
\ 
\begin{lemma}\label{lemma6} %Lemma 6
If $\sum_{i=1}^{+\infty} \sqrt{\sigma_i} \, \Vert G_i \Vert_\infty < +\infty$, then for all $\bfx$ in $ D$, 
\begin{equation}
\Vert \bfG(\bfx) \Vert_F \leq \beta_G < +\infty \quad \text{a.s.},
\end{equation} 
in which  $\beta_G$ is the second-order positive-valued random variable,
\begin{equation}                                                                                                                               \label{Eq31}
\beta_G = \Vert G_0\Vert_\infty + \sum_{i=1}^{+\infty} \sqrt{\sigma_i} \, \Vert G_i \Vert_\infty \, \vert \eta_i \vert  ,
\quad E\{ \beta_G^2\}  < +\infty \, .
\end{equation}
\end{lemma}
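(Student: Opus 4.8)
The plan is to read off the bound directly from the Karhunen--Lo\`eve expansion~\eqref{Eq28} of $[\bfG]$, so the first step is to make sense of the random series defining $\beta_G$. Since the $\eta_i$ have zero mean and unit variance, Cauchy--Schwarz gives $E\{|\eta_i|\}\leq (E\{\eta_i^2\})^{1/2}=1$, so by the Tonelli theorem applied to the nonnegative terms,
\[
E\Bigl\{\sum_{i=1}^{+\infty}\sqrt{\sigma_i}\,\Vert G_i\Vert_\infty\,|\eta_i|\Bigr\}
=\sum_{i=1}^{+\infty}\sqrt{\sigma_i}\,\Vert G_i\Vert_\infty\,E\{|\eta_i|\}
\leq\sum_{i=1}^{+\infty}\sqrt{\sigma_i}\,\Vert G_i\Vert_\infty<+\infty
\]
under the hypothesis. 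Hence the series in~\eqref{Eq31} converges almost surely to a finite positive-valued random variable, so $\beta_G$ is well defined a.s.

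Next I would upgrade~\eqref{Eq28} to a pointwise almost-sure identity and take Frobenius norms. For each fixed $\bfx\in D$, using $\Vert G_i(\bfx)\Vert_F\leq\Vert G_i\Vert_\infty$ (for a.e.\ $\bfx$, hence for all $\bfx$ when the eigenfunctions are taken bounded as in Lemma~\ref{lemma4}), the series $\sum_i\sqrt{\sigma_i}\,[G_i(\bfx)]\,\eta_i$ is dominated in Frobenius norm, a.s., by $\sum_i\sqrt{\sigma_i}\,\Vert G_i\Vert_\infty\,|\eta_i|<+\infty$; it therefore converges absolutely a.s.\ in $\MM_n^{\rm S}(\RR)$, and since a subsequence of its partial sums converges to $[\bfG(\bfx)]-[G_0(\bfx)]$ by the $L^2$-convergence underlying~\eqref{Eq28}, the expansion~\eqref{Eq28} holds pointwise a.s. Applying the triangle inequality to~\eqref{Eq28}, together with $\Vert G_0(\bfx)\Vert_F\leq\Vert G_0\Vert_\infty$, then yields $\Vert\bfG(\bfx)\Vert_F\leq\Vert G_0\Vert_\infty+\sum_{i=1}^{+\infty}\sqrt{\sigma_i}\,\Vert G_i\Vert_\infty\,|\eta_i|=\beta_G$ a.s.

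Finally, for the second-order property I would work with the partial sums $\beta_G^{(N)}=\Vert G_0\Vert_\infty+\sum_{i=1}^N\sqrt{\sigma_i}\,\Vert G_i\Vert_\infty\,|\eta_i|$, which increase to $\beta_G$, so that monotone convergence gives $E\{\beta_G^2\}=\lim_N E\{(\beta_G^{(N)})^2\}$; expanding the square and using the Cauchy--Schwarz bounds $E\{|\eta_i||\eta_j|\}\leq 1$ and $E\{|\eta_i|\}\leq 1$ gives $E\{(\beta_G^{(N)})^2\}\leq\bigl(\Vert G_0\Vert_\infty+\sum_{i=1}^N\sqrt{\sigma_i}\,\Vert G_i\Vert_\infty\bigr)^2$, and passing to the limit yields $E\{\beta_G^2\}\leq\bigl(\Vert G_0\Vert_\infty+\sum_{i=1}^{+\infty}\sqrt{\sigma_i}\,\Vert G_i\Vert_\infty\bigr)^2<+\infty$. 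The only points requiring care are the interchanges of expectation with an infinite sum and with a square, all of which are legitimate by Tonelli/monotone convergence because the summands are nonnegative, and the passage from the $L^2$ to the pointwise a.s.\ form of~\eqref{Eq28}; I do not expect any genuine analytic obstacle.
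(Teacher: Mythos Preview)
Your proof is correct and follows essentially the same route as the paper's: derive the bound on $\Vert\bfG(\bfx)\Vert_F$ by applying the triangle inequality to the Karhunen--Lo\`eve expansion~\eqref{Eq28}, and establish $E\{\beta_G^2\}<+\infty$ by expanding the square and using the Cauchy--Schwarz bound $E\{|\eta_i||\eta_{i'}|\}\le 1$ to obtain $E\{\beta_G^2\}\le\bigl(\Vert G_0\Vert_\infty+\sum_i\sqrt{\sigma_i}\Vert G_i\Vert_\infty\bigr)^2$. You are simply more careful than the paper about the measure-theoretic justifications (Tonelli for the a.s.\ finiteness of the series, monotone convergence for the second moment, and the passage from $L^2$ to pointwise a.s.\ convergence of~\eqref{Eq28}), all of which the paper leaves implicit.
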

%
% end Lemma 6
%
\begin{proof}The expression of $\beta_G$ defined in Eq.~(\ref{Eq31}) is directly deduced from Eq.~(\ref{Eq28}). The random variable $\beta_G$ can be written as
$\beta_G = \Vert G_0\Vert_\infty + \widehat \beta$. Clearly, if $E\{\widehat \beta^2\} = \sum_{i,i'} \sqrt{\sigma_i}\sqrt{\sigma_{i'}} \Vert G_i \Vert_\infty \Vert G_{i'} \Vert_\infty E\{\vert\eta_i\vert \, \vert\eta_{i'}\vert\} < +\infty$, then $E\{ \beta_G^2\}  < +\infty$.
We have $E\{\vert\eta_i\vert \, \vert\eta_{i'}\vert\} \leq \sqrt{E\{\eta_i^2\}}\sqrt{E\{\eta_{i'}^2\}} =1$ and thus
$E\{\widehat \beta^2\} \leq (\sum_{i=1}^{+\infty} \sqrt{\sigma_i} \, \Vert G_i \Vert_\infty  )^2$.
\end{proof}

\begin{rem} % Remark 3
It should be noted that, if $\Vert G_i \Vert_\infty < c_\infty < +\infty$ for all $i \geq 1$ with $c_\infty$ independent of $i$,
we then have $\sum_{i=1}^{+\infty} \sqrt{\sigma_i} \, \Vert G_i \Vert_\infty < +\infty$ if:
\begin{enumerate}[(ii)]\renewcommand{\theenumi}{\roman{enumi}}

\item for $d=1$, the hypothesis of  (a) in Lemma \ref{lemma5} holds for $\mu=2$. The proof is the following. We have $\sum_{i=1}^{+\infty} \sqrt{\sigma_i} \, \Vert G_i \Vert_\infty < c_\infty \sum_{i=1}^{+\infty} \sqrt{\sigma_i}$. Since $\sigma_i \rightarrow 0$ for $i \rightarrow +\infty$, there exists an integer $i_0 \geq 1$ such that, for all $i \geq i_0$, we have $\sigma_i < 1$. For $\mu=2$, then there exits  $0 < \zeta < 1/10$ such that $\sum_{i=i_0}^{+\infty} \sqrt{\sigma_i} \leq \sum_{i=i_0}^{+\infty} \sigma_i^{\zeta+2/5} < +\infty$, which yields
 $\sum_{i=1}^{+\infty} \sqrt{\sigma_i}  < +\infty$, and consequently, $\sum_{i=1}^{+\infty} c_\infty\, \sqrt{\sigma_i}  < +\infty$.

\item for $d\geq 2$ (finite integer), the hypothesis of  (b) in Lemma \ref{lemma5} holds for $\mu=d$. The proof is then the following. We have $\sum_{i=1}^{+\infty} \sqrt{\sigma_i} \, \Vert G_i \Vert_\infty < c_\infty \sum_{i=1}^{+\infty} \sqrt{\sigma_i}$ and for $\mu=d$, $\sqrt{\sigma_i} = O( i^{-1 -\zeta/(2d)})$ and consequently, for
    $0 < \zeta \leq 1$, $\sum_{i=1}^{+\infty} \sqrt{\sigma_i}  < +\infty$.
\end{enumerate}
\end{rem}
The previous results allow the following proposition to be proven.
\begin{proposition}\label{proposition3} % Proposition 3
 Let $\sigma_i$ and $[G_i]$ be defined in Section~\ref{Section2.3.1}. If $\sum_{i=1}^{+\infty} \sqrt{\sigma_i} \, \Vert G_i \Vert_\infty < +\infty$, then for all $\bfx$ in $ D$,
\begin{equation}                                                                                                                               \label{Eq32}
\Vert\bfK(\bfx)\Vert_F \,\,\leq \,\, \beta < +\infty \quad a.s\, ,
\end{equation}
in which  $\beta$ is a positive-valued random variable independent of $\bfx$.
Let $\beta_G$ be the second-order positive-valued random variable  defined by Eq.~(\ref{Eq31}).

\begin{enumerate}[(ii)]\renewcommand{\theenumi}{\roman{enumi}}

\item \emph{(Exponential type representation)}  If $[\bfK(\bfx)]$ is represented by Eq.~(\ref{Eq2}) with Eq.~(\ref{Eq8}),
\begin{equation}                                                                                                                               \label{Eq33}
[\bfK(\bfx)] =\frac{1}{1+\varepsilon} [\underline L(\bfx)]^T\,\{ \varepsilon [\, I_n] + \exp_\MM([\bfG(\bfx)])\}  \, [\underline L(\bfx)] \, ,
\end{equation}
then, $\beta = \underline k_{\, 1} \sqrt{n}(\varepsilon + e^{\beta_G})/(1+\varepsilon)$.

\item \emph{(Square type representation)}  If $[\bfK(\bfx)]$ is represented by Eq.~(\ref{Eq2}) with Eqs.~(\ref{Eq14}) and (\ref{Eq15}),
\begin{equation}                                                                                                                               \label{Eq34}
[\bfK(\bfx)] =\frac{1}{1+\varepsilon} [\underline L(\bfx)]^T\,\{ \varepsilon [\, I_n]
+ [\curL([\bfG(\bfx)])]^T \, [\curL([\bfG(\bfx)])]\}  \, [\underline L(\bfx)] \, ,
\end{equation}
then, $\beta = \underline k_{\, 1} (\sqrt{n}\, \varepsilon + \gamma_0 +\gamma_1\,\beta_G^2)/(1+\varepsilon)$ in which
$\gamma_0$ and $\gamma_1$ are two positive and finite real numbers. In addition the random variable $\beta$ is such that
\begin{equation}                                                                                                                               \label{Eq35}
E\{\beta\} < +\infty \, .
\end{equation}
\end{enumerate}
\end{proposition}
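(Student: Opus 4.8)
The statement is a direct corollary obtained by chaining Lemma~\ref{lemma6}, Lemma~\ref{lemma2}, Proposition~\ref{proposition1} and Proposition~\ref{proposition2}, so the plan is purely to assemble these, keeping careful track that all the dominating random variables are independent of $\bfx$. First I would invoke Lemma~\ref{lemma6}: the hypothesis $\sum_{i\geq 1}\sqrt{\sigma_i}\,\Vert G_i\Vert_\infty<+\infty$ gives, for every $\bfx\in D$, the a.s.\ bound $\Vert\bfG(\bfx)\Vert_F\leq\beta_G$ with $\beta_G$ the $\bfx$-independent second-order positive random variable of Eq.~\eqref{Eq31}, so in particular $E\{\beta_G^2\}<+\infty$. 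This is the single input feeding both cases; the reason it suffices is that $[\bfG]$ is the only random ingredient inside the nonlinear maps defining $[\bfK_0]$, and those maps are applied pointwise in $\bfx$ in a way that is uniform in $\bfx$.

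For case (i), with $[\bfK_0(\bfx)]=\exp_\MM([\bfG(\bfx)])$ and $\Vert\bfG(\bfx)\Vert_F\leq\beta_G$ a.s., part~(ii) of Proposition~\ref{proposition1} yields $\Vert\bfK_0(\bfx)\Vert_F\leq\beta_0:=\sqrt{n}\,e^{\beta_G}$ a.s., with $\beta_0$ independent of $\bfx$. Feeding this $\beta_0$ into part~(i) of Lemma~\ref{lemma2} (whose hypothesis is exactly ``$\Vert\bfK_0(\bfx)\Vert_F\leq\beta_0$ a.s.'') and recalling that $[\bfK]$ is given by Eq.~\eqref{Eq2}, i.e.\ Eq.~\eqref{Eq33}, I obtain $\Vert\bfK(\bfx)\Vert_F\leq\beta$ a.s.\ with $\beta=\underline k_{\,1}(\sqrt{n}\,\varepsilon+\beta_0)/(1+\varepsilon)=\underline k_{\,1}\sqrt{n}\,(\varepsilon+e^{\beta_G})/(1+\varepsilon)$, which is the asserted formula, and $\beta$ is independent of $\bfx$. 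For case (ii), with $[\bfK_0(\bfx)]=[\curL([\bfG(\bfx)])]^T[\curL([\bfG(\bfx)])]$, part~(ii) of Proposition~\ref{proposition2} gives $\Vert\bfK_0(\bfx)\Vert_F\leq\beta_0:=\gamma_0+\gamma_1\beta_G^2$ a.s.\ (with $\gamma_0,\gamma_1\in(0,+\infty)$ the constants of Proposition~\ref{proposition2}(i), independent of $\bfx$), and again Lemma~\ref{lemma2}(i) applied with $[\bfK]$ as in Eq.~\eqref{Eq2}, i.e.\ Eq.~\eqref{Eq34}, gives $\Vert\bfK(\bfx)\Vert_F\leq\beta$ a.s.\ with $\beta=\underline k_{\,1}(\sqrt{n}\,\varepsilon+\gamma_0+\gamma_1\beta_G^2)/(1+\varepsilon)$. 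Finally, taking the expectation and using $E\{\beta_G^2\}<+\infty$ from Lemma~\ref{lemma6} gives $E\{\beta\}=\underline k_{\,1}(\sqrt{n}\,\varepsilon+\gamma_0+\gamma_1E\{\beta_G^2\})/(1+\varepsilon)<+\infty$, i.e.\ Eq.~\eqref{Eq35}.

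\textbf{Expected difficulty.} There is essentially no genuine obstacle: the result is a bookkeeping consequence of the earlier lemmas and propositions. The only points deserving care are (a) verifying that $\beta_G$, $\beta_0$ and $\beta$ are all genuinely independent of $\bfx$ — which is inherited from the $\bfx$-independence of $\beta_G$ produced by Lemma~\ref{lemma6} together with the pointwise, $\bfx$-uniform nature of the subsequent bounds — and (b) noting that the integrability claim $E\{\beta\}<+\infty$ is available only in the square-type case, because it relies on $E\{\beta_G^2\}<+\infty$, whereas in the exponential case the bound involves $e^{\beta_G}$, which need not be integrable; this is why Eq.~\eqref{Eq35} is stated only under (ii).
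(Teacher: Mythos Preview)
Your proposal is correct and follows exactly the approach of the paper, which simply states that the proposition results from Lemma~\ref{lemma2}, Propositions~\ref{proposition1} and~\ref{proposition2}, and Lemma~\ref{lemma6}. You have merely (and correctly) unpacked this one-line citation, including the observation that Eq.~\eqref{Eq35} is only claimed in case~(ii) because $E\{\beta_G^2\}<+\infty$ controls $\gamma_0+\gamma_1\beta_G^2$ but not $e^{\beta_G}$.
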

%
% end Proposition 3
%
\begin{proof}
Proposition~\ref{proposition3}  results from Lemma \ref{lemma2}, Propositions \ref{proposition1} and \ref{proposition2}, and Lemma \ref{lemma6}.
\end{proof}
\subsection{Approximation of random field $[\bfG]$}
\label{Section2.5}

Taking into account Eqs.~(\ref{Eq28}) to (\ref{Eq30}), we introduce the approximation $[\bfG^{(m,N)}]$ of the random field $[\bfG]$ such that,
\begin{align}                                                                                                                               \label{Eq36}
&[\bfG^{(m,N)}(\bfx)] = [G_0(\bfx)] + \sum_{i=1}^{m} \sqrt{\sigma_i}\, [G_i(\bfx)] \, \eta_i\, ,
\\                                                                                                                      
    \label{Eq37}
& \eta_i =  \sum_{j=1}^{N} y_i^j \, \Psi_j(\bfXi) \, ,
\end{align}
in which the $\{\Psi_{j}\}_{j=1}^N$ only depends on a random vector $\bfXi =(\Xi_1,\ldots ,\Xi_{N_g})$ of $N_g$ independent normalized Gaussian
random variables $\Xi_1,\ldots ,\Xi_{N_g}$ defined on  probability space $(\Theta,\curT,\curP)$. The
coefficients $y_{i}^j$ are supposed to verify $\sum_{j=1}^{N} y_i^j \, y_{i'}^j  = \delta_{ii'}$
which ensures that the random variables, $\{\eta_i\}_{i=1}^m$, are uncorrelated centered random variables with unit variance, which means that
$E\{\eta_i\eta_{i'}\} = \delta_{ii'}$. The relation between the coefficients can be rewritten as
\begin{equation}                                                                                                                               \label{Eq38}
[y]^T\, [ y] = [I_m] \, ,
\end{equation}
in which $[ y] \in \MM_{N,m}(\RR)$ is such that $[y]_{ji} =  y^j_{i}$ for $1\leq i \leq m$ and $1 \leq j \leq N$.
Introducing the random vectors $\bfeta = (\eta_1,\ldots , \eta_m)$ and $\bfPsi(\bfXi) = (\Psi_1(\bfXi),\ldots , \Psi_N(\bfXi))$, Eq.~\eqref{Eq37} can be rewritten as
\begin{equation}                                                                                                                               \label{Eq38bis}
\bfeta =  [y]^T\,  \bfPsi(\bfXi) \, .
\end{equation}
Equation~(\ref{Eq38}) means
that $[ y ]$ belongs to the compact Stiefel manifold
\begin{equation}                                                                                                                               \label{Eq39}
\VV_m(\RR^N) = \left\{ [y]\in \MM_{N,m}(\RR)\,  ; \; [y]^T \, [y] = [I_m] \right \} \, .
\end{equation}
With the above hypotheses, the covariance operators of random fields $[\bfG]$ and $[\bfG^{(m,N)}]$ coincide on the subspace spanned by the finite family $\{[G_i]\}_{i=1}^m$.
\section{Parametrization of discretized random fields in the general class and identification strategy}
\label{Section3}
Let us consider the approximation $\{[\bfG^{(m,N)}(\bfx)] ,\bfx\in D\}$ of $\{[\bfG(\bfx)] ,\bfx\in D\}$ defined by Eqs.~\eqref{Eq36} to \eqref{Eq38}. The corresponding approximation $\{[\bfK^{(m,N)}(\bfx)] ,\bfx\in D\}$ of random field $\{[\bfK(\bfx)] ,\bfx\in D\}$ defined by Eq.~\eqref{Eq33} or by Eq.~\eqref{Eq34}, is rewritten, for all $\bfx$ in $D$,  as
\begin{equation}                                                                                                                                \label{Eq40}
[\bfK^{(m,N)}(\bfx)] = \curK^{(m,N)}(\bfx,\bfXi,[y]) \, ,
\end{equation}
in which
$(\bfx,\bfy,[y]) \mapsto \curK^{(m,N)}(\bfx,\bfy,[y])$ is a mapping defined on $D\times\RR^{N_g}\times \VV_m(\RR^N)$ with values in $\MM_n^+(\RR)$.
\begin{itemize}
\item The first objective of this section is to constructed a parameterized general class of random fields, $\{[\bfK^{(m,N)}(\bfx)] ,\bfx\in D\}$, in introducing a minimal parametrization of the compact Stiefel manifold $\VV_m(\RR^N)$ with an algorithm of complexity $O(N m^2)$.
    \item The second objective will be the presentation of an identification strategy of an optimal random field $\{[\bfK^{(m,N)}(\bfx)] ,\bfx\in D\}$ in the parameterized general class, using partial and limited experimental data.
\end{itemize}
\subsection{Minimal parametrization of the compact Stiefel manifold $\VV_m(\RR^N)$}
\label{Section3.1}
Here, we introduce a particular minimal parametrization of the compact Stiefel manifold $\VV_m(\RR^N)$ using matrix exponentials (see e.g. \cite{Absil2004}). The dimension of $\VV_m(\RR^N)$  being  $ \nu = mN - m(m+1)/2$, the parametrization consists in introducing  a surjective mapping  from $\RR^\nu$ onto $\VV_m(\RR^N)$. The construction is as follows.

\begin{enumerate}[(iii)]\renewcommand{\theenumi}{\roman{enumi}}

\item 
Let $[a]$ be given in $\VV_m(\RR^N)$ and
let $[a_\perp]\in\MM_{N,N-m}(\RR)$ be the orthogonal complement of $[a]$, which is such that
$[a \,\,\, a_\perp]$ is in $\OO(N)$. Consequently, we have,
\begin{equation}                                                                                                                                \label{Eq41}
[a]^T\, [a] = [I_m], \qquad [a_\perp]^T\, [a_\perp] = [I_{N-m}] , \qquad [a_\perp]^T\, [a] = [0_{N-m,m}] \, .
\end{equation}
The columns of matrix $[a_\perp]$ can be chosen as the vectors of the orthonormal basis of the null space of $[a]^T$. In practice \cite{Golub1996}, $[a_\perp]$ can be constructed using the QR factorization of matrix $[a]= [Q_N]\, [R_m]$ and  $[a_\perp]$ is then made up of the columns $j=m+1,\ldots , N$ of $[Q_N]\in \OO(N)$.
%
%We now introduce a parametrization in $\RR^\nu$,  with
%the parameter $\bfz$ belonging to $\RR^\nu$:

\item   Let $[A]$ be a skew-symmetric $(m\times m)$ real matrix which then depends on $m(m-1)/2$ parameters denoted by $z_1,\ldots ,z_{m(m-1)/2}$ and such that, for $1\leq i < j \leq m$, one has $[A]_{ij} = - [A]_{ji} = z_k$ with $k=i+(j-1)(j-2)/2$ and for $1 \leq i\leq m$, $[A]_{ii} =0$.
\item  Let $[B]$ be a $((N-m)\times m)$ real matrix which then depends on $(N-m)m$ parameters denoted by
$z_{m(m-1)/2 + 1},\ldots , z_{m(m-1)/2 + (N-m)m }$ and such that, for $1\leq i\leq N-m$ and $1\leq j\leq m$, one has $[B]_{ij}= z_{m(m-1)/2 + k}$ with
$k=i+(j-1)(N-m)$.
Introducing $\bfz = (z_1,\ldots , z_\nu)$ in $\RR^\nu$, there is a one-to-one linear mapping $\curS$ from $\RR^\nu$ into
$\MM^{\rm SS}_m(\RR)\times \MM_{N-m,m}(\RR)$ such that
\begin{equation}                                                                                                                                \label{Eq42}
\{[A] , [B]\} = \curS(\bfz) \, ,
\end{equation}
in which matrices $[A]$ and $[B]$ are defined in (i) and (ii) above as a function of $\bfz$.
\end{enumerate}

Let $t > 0$ be a parameter which is assumed to be fixed. Then, for an arbitrary $[a]$ in $\VV_m(\RR^N)$, a first minimal parametrization of the compact Stiefel manifold  $\VV_m(\RR^N)$ can be defined by the mapping $\curM_{[a]}$ from $\RR^\nu$ onto $\VV_m(\RR^N)$ such that
\begin{equation}                                                                                                                                \label{Eq43}
[y] = \curM_{[a]}(\bfz) := [a \,\,\, a_\perp] \, \{\exp_\MM(t\, \left [
         \begin{array}{cc}
                     A     &    -B^T \\
                     B     &     0  \\
         \end{array}
                  \right ]) \} \, [I_{N,m}]\, .
\end{equation}
In the context of the present development, we are interested in the case for which $N\geq m$ and possibly, in the case for which $N\gg m$ with $N$ very large. The evaluation of the mapping $\curM_{[a]}$ defined  by Eq.~\eqref{Eq43} has a complexity $O(N^3)$.

We then propose to use a second form of minimal parametrization of the compact Stiefel with a reduced computational complexity.  This parametrization, which  is derived from the results presented in \cite{Edelman1998}, is defined by the mapping  $\curM_{[a]}$ from $\RR^\nu$ onto $\VV_m(\RR^N)$ such that
%Equation~\eqref{Eq43} can be rewritten with an algorithm of complexity $O(Nm^2)$ as
%
\begin{equation}                                                                                                                                \label{Eq44}
[y] = \curM_{[a]}(\bfz) := [a \,\,\, Q] \, \{\exp_\MM(t\, \left [
         \begin{array}{cc}
                     A     &    -R^T \\
                     R     &     0  \\
         \end{array}
                  \right ]) \} \, [I_{2m,m}]\, ,
\end{equation}
in which $[a\,\,\, Q]\in\MM_{N,2m}(\RR)$. The matrix $[Q]$ is in $\VV_m(\RR^N)$ and $[R]$ is an upper triangular $(m\times m)$ real matrix. These two matrices   are constructed using the QR factorization of the matrix $[a_\perp]\, [B]\in \MM_{N,m}(\RR)$,
\begin{equation}                                                                                                                                \label{Eq45}
[a_\perp]\, [B] = [Q] \, [R] \, .
\end{equation}
The evaluation of the mapping $\curM_{[a]}$ defined  by Eq.~\eqref{Eq44} has a complexity $O(Nm^2)$.

\subsection{Parameterized general class of random fields and parameterized random upper bound}
\label{Section3.2}

Using Eq.~\eqref{Eq44}, the parameterized general class  of random field $\{[\bfK^{(m,N)}(\bfx)] ,$ $\bfx\in D\}$ is then defined as
\begin{equation}                                                                                                                                \label{Eq46}
\curK := \{ \curK^{(m,N)}(\cdot,\bfXi,\curM_{[a]}(\bfz)) \, ; \,  \bfz \in \RR^\nu\} \, .
\end{equation}
It should be noted that, for $\bfz = \bfzero$, $[y] = \curM_{[a]}(\bfzero) =[a]$, which corresponds to the random field $[\bfK^{(m,N)}]  = \curK^{(m,N)}(\cdot,\bfXi,[a])$. The following proposition corresponds to Proposition~\ref{proposition3} for the approximation $[\bfK^{(m,N)}]$ of  random field $[\bfK]$.
\begin{proposition}\label{proposition4} % Proposition 4
%Let $\sigma_i$ and $[G_i]$ be defined in Section~\ref{Section2.3.1}. If $\sum_{i=1}^{+\infty} \sqrt{\sigma_i} \, \Vert %G_i \Vert_\infty < +\infty$, then 
The random field $[\bfK^{(m,N)}] = \curK^{(m,N)}(\cdot,\bfXi,\curM_{[a]}(\bfz))$, $\bfz \in\RR^{\nu}$, is such that
\begin{equation}
\Vert \bfK^{(m,N)}  \Vert_{F} \le \gamma(\bfXi,\bfz) < +\infty \label{boundKmN}
\end{equation}
almost surely and for all $\bfz\in\RR^{\nu}$, where $\gamma : \RR^{N_{g}} \times \RR^{\nu}\rightarrow \RR$ is a measurable positive function. For the square type representation of random fields,  there exists a constant $ \overline{\gamma} $, independent on $N$ and $\bfz$, such that
\begin{equation}
E\{\gamma(\bfXi,\bfz)\} \le  \overline{\gamma} <+\infty\quad  \text{for all }\bfz\in\RR^{\nu}. \label{boundgamma}
\end{equation}
Moreover, if $\sum_{i=1}^{+\infty} \sqrt{\sigma_i} \, \Vert G_i \Vert_\infty < +\infty$, with $\sigma_i$ and $[G_i]$ defined in Section~\ref{Section2.3.1}, \eqref{boundgamma}
is satisfied for a constant  $\overline{\gamma}$ independent of $m$.
\end{proposition}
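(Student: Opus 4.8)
The plan is to carry the a priori estimates already established for $[\bfK]$ --- Lemma~\ref{lemma1}(i), Propositions~\ref{proposition1} and~\ref{proposition2}, and Lemma~\ref{lemma6} --- through the finite substitution of $[\bfG]$ by its truncation $[\bfG^{(m,N)}]$ defined in \eqref{Eq36}, the only genuinely new ingredient being that $\curM_{[a]}$ maps $\RR^\nu$ into the Stiefel manifold $\VV_m(\RR^N)$, so that the coefficient vector $\bfeta=\curM_{[a]}(\bfz)^T\bfPsi(\bfXi)$ satisfies $E\{\bfeta\bfeta^T\}=[I_m]$ for \emph{every} $\bfz$.

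First I would observe that, for each fixed $\bfz$, the field $[\bfG^{(m,N)}]$ is a finite linear combination of the fixed symmetric matrices $[G_i(\bfx)]$ with coefficients $\eta_i=\eta_i(\bfXi,\bfz)$ that are polynomials in the Gaussian germ $\bfXi$, hence have finite moments of all orders; thus $[\bfG^{(m,N)}]\in\curC^{2,S}_n$ and the associated $[\bfK_0^{(m,N)}]$ (obtained from $[\bfG^{(m,N)}]$ via $\exp_\MM$ or via $\curL$) lies in $\curC^+_n$. Lemma~\ref{lemma1}(i) then gives, a.s.\ and for all $\bfx$, $\Vert\bfK^{(m,N)}(\bfx)\Vert_F\le\frac{\underline k_1}{1+\varepsilon}(\sqrt n\,\varepsilon+\Vert\bfK_0^{(m,N)}(\bfx)\Vert_F)$. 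A bound on $\Vert\bfG^{(m,N)}(\bfx)\Vert_F$ uniform in $\bfx$ follows from the boundedness of $[G_0]$ and of the finitely many eigenfunctions $[G_1],\dots,[G_m]$ (whose sup-norms $\Vert G_i\Vert_\infty$ are finite, cf.\ Lemma~\ref{lemma4}): $\Vert\bfG^{(m,N)}(\bfx)\Vert_F\le\beta_G^{(m,N)}:=\Vert G_0\Vert_\infty+\sum_{i=1}^m\sqrt{\sigma_i}\,\Vert G_i\Vert_\infty\,\vert\eta_i\vert$, a measurable function of $(\bfXi,\bfz)$ that is finite for every value of $\bfXi$ because the sum is finite. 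Inserting this into Proposition~\ref{proposition1}(i) in the exponential case ($\Vert\bfK_0^{(m,N)}(\bfx)\Vert_F\le\sqrt n\,e^{\beta_G^{(m,N)}}$) and into Proposition~\ref{proposition2}(i) in the square case ($\Vert\bfK_0^{(m,N)}(\bfx)\Vert_F\le\gamma_0+\gamma_1(\beta_G^{(m,N)})^2$) yields the required measurable positive function $\gamma(\bfXi,\bfz)$ --- respectively $\frac{\underline k_1}{1+\varepsilon}(\sqrt n\,\varepsilon+\sqrt n\,e^{\beta_G^{(m,N)}})$ and $\frac{\underline k_1}{1+\varepsilon}(\sqrt n\,\varepsilon+\gamma_0+\gamma_1(\beta_G^{(m,N)})^2)$ --- measurability coming from the continuity of $\curM_{[a]}$ and of $\bfz\mapsto\curM_{[a]}(\bfz)^T\bfPsi(\bfXi)$. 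This settles \eqref{boundKmN}.

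For \eqref{boundgamma} in the square case I would take expectations and reduce everything to $E\{(\beta_G^{(m,N)})^2\}$: writing $\widehat\beta:=\sum_{i=1}^m\sqrt{\sigma_i}\,\Vert G_i\Vert_\infty\,\vert\eta_i\vert$, one has $E\{(\beta_G^{(m,N)})^2\}\le(\Vert G_0\Vert_\infty+\sqrt{E\{\widehat\beta^2\}})^2$ with $E\{\widehat\beta^2\}=\sum_{i,i'=1}^m\sqrt{\sigma_i\sigma_{i'}}\,\Vert G_i\Vert_\infty\Vert G_{i'}\Vert_\infty\,E\{\vert\eta_i\vert\,\vert\eta_{i'}\vert\}$. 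The decisive point is that $\bfeta=\curM_{[a]}(\bfz)^T\bfPsi(\bfXi)$ with $\curM_{[a]}(\bfz)\in\VV_m(\RR^N)$ for every $\bfz$ and $E\{\bfPsi(\bfXi)\bfPsi(\bfXi)^T\}=[I_N]$, so $E\{\bfeta\bfeta^T\}=\curM_{[a]}(\bfz)^T\curM_{[a]}(\bfz)=[I_m]$; hence $E\{\eta_i^2\}=1$ and, by Cauchy--Schwarz, $E\{\vert\eta_i\vert\,\vert\eta_{i'}\vert\}\le1$ --- a bound depending neither on $\bfz$ nor on $N$. Therefore $E\{\widehat\beta^2\}\le(\sum_{i=1}^m\sqrt{\sigma_i}\,\Vert G_i\Vert_\infty)^2$ and $E\{\gamma(\bfXi,\bfz)\}\le\frac{\underline k_1}{1+\varepsilon}(\sqrt n\,\varepsilon+\gamma_0+\gamma_1(\Vert G_0\Vert_\infty+\sum_{i=1}^m\sqrt{\sigma_i}\,\Vert G_i\Vert_\infty)^2)=:\overline\gamma$, independent of $N$ and $\bfz$. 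Under the additional hypothesis $\sum_{i=1}^{+\infty}\sqrt{\sigma_i}\,\Vert G_i\Vert_\infty<+\infty$ one simply replaces the finite sum by the convergent series $\sum_{i=1}^{+\infty}$, making $\overline\gamma$ independent of $m$ as well --- exactly the discretized counterpart of Lemma~\ref{lemma6}.

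The only real subtlety is to estimate $E\{\vert\eta_i\vert\,\vert\eta_{i'}\vert\}$ using \emph{only} the orthogonality relation $\curM_{[a]}(\bfz)^T\curM_{[a]}(\bfz)=[I_m]$ built into the Stiefel parametrization, rather than through the chaos coefficients themselves, which would reintroduce a dependence on $\bfz$ and $N$; everything else is a direct substitution into results already proved. It should also be noted that \eqref{boundgamma} genuinely fails for the exponential representation, since $E\{e^{\beta_G^{(m,N)}}\}$ need not be finite once the $\eta_i$ involve Hermite polynomials of degree $\ge 2$ --- which is why the integrable bound is asserted only for the square type.
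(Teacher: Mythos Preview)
Your proof is correct and follows essentially the same route as the paper: bound $\Vert\bfG^{(m,N)}(\bfx)\Vert_F$ by $\delta=\Vert G_0\Vert_\infty+\sum_{i=1}^m\sqrt{\sigma_i}\Vert G_i\Vert_\infty|\eta_i|$, feed this into the estimates of Propositions~\ref{proposition1}--\ref{proposition2} (the paper cites Proposition~\ref{proposition3}, which packages the same results), and then use the Stiefel constraint $\curM_{[a]}(\bfz)^T\curM_{[a]}(\bfz)=[I_m]$ to get $E\{\eta_i^2\}=1$ uniformly in $\bfz$ and $N$. The only cosmetic difference is that the paper bounds $E\{\delta^2\}$ via $(a+b)^2\le 2a^2+2b^2$ whereas you use Minkowski's inequality, yielding a slightly sharper constant.
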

\begin{proof}
Following the proof of Lemma \ref{lemma6}, we obtain
\begin{equation}
\Vert \bfG^{(m,N)}(\bfx) \Vert_{F} \le  \Vert G_{0} \Vert_{\infty} + \sum_{i=1}^{m} \sqrt{\sigma_{i}} \Vert G_{i}\Vert_{\infty} \vert \eta_{i}(\bfXi,\bfz) \vert:=\delta(\bfXi,\bfz),\label{eq:delta}\end{equation}
with $\boldsymbol{\eta} = (\eta_{1},\ldots,\eta_{m})= \curM_{[a]}(\bfz)^{T}\bfPsi(\bfXi)$.
Using Proposition~\ref{proposition3}, we then obtain Eq.~\eqref{boundKmN} with
$\gamma= \underline{k}_{1}\sqrt{n}(\varepsilon +e^{\delta})/(1+\varepsilon)$
for the exponential type representation and
$\gamma = \underline k_{\, 1} (\sqrt{n}\, \varepsilon + \gamma_0 +\gamma_1\,\delta^2)/(1+\varepsilon)$ for the square type representation. Using $E\{\eta_{\, i}^{2}\}=1$, it can be shown that $E\{\delta^2\} \le  2\Vert G_{0}\Vert_{\infty}^{2} + 2\left( \sum_{i=1}^{m} \sqrt{\sigma_{i}} \Vert G_{i}\Vert_{\infty}  \right)^{2} 
%\le2\Vert G_{0}\Vert_{\infty}^{2} + 2\left( \sum_{i=1}^{\infty} \sqrt{\sigma_{i}} \Vert G_{i}\Vert_{\infty}  \right)^{2} 
:= \overline \zeta_{m}$, where $\overline \zeta_{m}<+\infty$ is independent on $N$ and $\bfz$. Therefore, for the square type representation, we obtain Eq.~\eqref{boundgamma} with
$\overline{\gamma} = \underline k_{\, 1} (\sqrt{n}\, \varepsilon + \gamma_0 +\gamma_1\,\overline{\zeta}_{m})/(1+\varepsilon)$. Moreover, if $\sum_{i=1}^{+\infty} \sqrt{\sigma_i} \, \Vert G_i \Vert_\infty < +\infty$, then $ \overline \zeta_{m}\le  \overline \zeta_{\infty} <+\infty$ and  Eq.~\eqref{boundgamma} holds for $\overline{\gamma} = \underline k_{\, 1} (\sqrt{n}\, \varepsilon + \gamma_0 +\gamma_1\,\overline{\zeta}_{\infty})/(1+\varepsilon)$.
 \end{proof}

\subsection{Brief description of the identification procedure}
\label{Section3.3}
Let $\curB$ be the nonlinear mapping which, for any given random field $[\bfK] = \{[\bfK(\bfx)], \bfx\in D\}$ introduced and studied in Section~\ref{Section2}, associates a unique random observation vector $\bfU^\obs = \curB([\bfK])$ with values in $\RR^{m_\pobs}$. The nonlinear mapping $\curB$ is constructed in solving the elliptic stochastic boundary value problem as explained in Section~\ref{Section4}. We are then interested in identifying the random field $[\bfK]$ using partial and limited experimental data set $\bfu^{\exper , 1}, \ldots , \bfu^{\exper, \nu_\experp} $ in $\RR^{m_\pobs}$ ($m_\obs$ and $\nu_\exper$ are small).
In high stochastic dimension (which is the assumption of the present paper), such a statistical inverse problem is an  ill-posed problem if no additional available information is introduced. As explained in \cite{Soize2010,Soize2011}, this difficulty can be circumvented (1) in introducing an algebraic prior model (APM), $[\bfK^\APM]$, of random field $[\bfK]$, which contains additional information and satisfying the required mathematical properties and (2) in using an adapted identification procedure. Below, we  summarize and we adapt this procedure to the new representations and their parametrizations of random field $[\bfK]$ that we propose in this paper. The  steps of the identification procedure are the following:

% STEP 1
\paragraph*{Step 1} Introduction of a family  $\{[\bfK^\APM(\bfx;\bfw)],$ $\bfx\in D\}$ of  prior algebraic models for random field $[\bfK]$. This family depends on an unknown parameter $\bfw$ (for instance, $\bfw$ can be made up of the mean function, spatial correlation lengths, dispersion parameters controlling the statistical fluctuations, parameters controlling the shape of the tensor-valued correlation function, parameters controlling the symmetry class, etc). For fixed $\bfw$, the probability law and the generator of independent realizations of the APM are known.
{For example, for the modeling of groundwater Darcy flows, a typical choice for the APM would consist in a homogeneous and isotropic lognormal random field $[\bfK^\APM(\bfx;\bfw)]=\exp(G^\APM) [I_d]$ with $G^\APM$ a homogeneous real-valued gaussian random field having a covariance function of the Mat\'ern family. With this APM, $\bfw$ consists of 4 scalar parameters that are the mean value of $G$ and the three parameters of the Mat\'ern covariance.}
For {other} examples of algebraic prior models of non-Gaussian positive-definite matrix-valued random fields, we refer the reader to \cite{Soize2006} for the anisotropic class, to \cite{Ta2010} for the isotropic class, to \cite{Guilleminot2011a} for bounded random fields in the anisotropic class, to \cite{Guilleminot2011b} for random fields with any symmetry class (isotropic, cubic, transversal isotropic, tetragonal, trigonal, orthotropic), and finally, to \cite{Guilleminot2013} for a very general class of bounded random fields with any symmetry properties from the isotropic class to the anisotropic class.
%
% STEP 2
\paragraph*{Step 2} Identification of an optimal value $\bfw^\opt$ of parameter $\bfw$ using the experimental data set, the family of stochastic solutions
$\bfU^\obs(\bfw) = \curB([\bfK^\APM(\cdot;\bfw)])$  and a statistical inverse method such as the moment method, the least-square method or the maximum likelihood method \cite{Serfling1980,Spall2003,Walter1997,Soize2010}). The optimal algebraic prior model  $\{[\bfK^\OAPM(\bfx)], \bfx\in D\} := \{[\bfK^\APM(\bfx;\bfw^\opt)], \bfx\in D\}$ is then obtained. Using the generator of realizations of the APM, $\nu_\KL$ independent realizations $[K^{(1)}],\ldots , [K^{(\nu_\KL)}]$ of random field $[\bfK^\OAPM]$ can be generated with $\nu_\KL$ as large as it is desired without inducing a significant computational cost.
%
% STEP 3
\paragraph*{Step 3} Choice of a type of representation for random field $[\bfK]$ and, using Eq.~\eqref{Eq2} with Eq.~\eqref{Eq9} or with Eq.~\eqref{Eq17bis}, the optimal algebraic prior model $\{[\bfG^\OAPM(\bfx)],$ $\bfx\in D\}$ of random field $[\bfG]$ is deduced.
For all $\bfx\in D$,  $[\bfG^\OAPM(\bfx)] = \log_\MM([\bfK_0^\OAPM(\bfx)])$ for the exponential type representation and  $[\bfG^\OAPM(\bfx)] = \LL^{-1}([\bfK_0^\OAPM(\bfx)])$ for the square type representation, with
\begin{equation}                                                                                                                                \label{Eq47}
[\bfK_0^\OAPM(\bfx)] =(1+\varepsilon) [\underline L(\bfx)]^{-T}\, [\bfK^\OAPM(\bfx)] \, [\underline L(\bfx)] -  \varepsilon [\, I_n] \, .
\end{equation}
It is assumed that random field $[\bfG^\OAPM]$ belongs to $\curC^{2,S}_n$.
From the $\nu_\KL$ independent realizations $[K^{(1)}],\ldots , [K^{(\nu_\KL)}]$ of random field $[\bfK^\OAPM]$, it can be deduce the $\nu_\KL$ independent realizations $[G^{(1)}],\ldots , [G^{(\nu_\KL)}]$ of random field $[\bfG^\OAPM]$.

% STEP 4
\paragraph*{Step 4} Use of the $\nu_\KL$ independent realizations $[G^{(1)}],\ldots , [G^{(\nu_\KL)}]$ of random field $[\bfG^\OAPM]$ and use of the adapted statistical estimators for estimating the mean function  $[G_0^\OAPM]$ and the covariance function $C_{\bfG^\pOAPM}$ of random field $[\bfG^\OAPM]$ (see Section~\ref{Section2.3.1}). Then, calculation of  the first $m$ eigenvalues $\sigma_1\geq \ldots \geq \sigma_m$ and the corresponding eigenfunctions
$[G_1],\ldots, [G_m]$ of the covariance operator $\hbox{Cov}_{\bfG^\pOAPM}$ defined by the kernel $C_{\bfG^\pOAPM}$. For a given convergence tolerance with respect to  $m$, use of Eq.~\eqref{Eq36} to construct independent realizations of the random vector $\bfeta^\OAPM = (\eta^\OAPM_1, \ldots , \eta^\OAPM_m)$ such that
\begin{equation}                                                                                                                               \label{Eq48}
[\bfG^{\OAPM (m)}(\bfx)] = [G_0^\OAPM(\bfx)] + \sum_{i=1}^{m} \sqrt{\sigma_i}\, [G_i(\bfx)] \, \eta^\OAPM_i\, .
\end{equation}
For $i=1,\ldots , m$, the $\nu_\KL$ independent realizations $\eta_i^{(1)},\ldots , \eta_i^{(\nu_\KL)}$ of the random variable $\eta^\OAPM_i$ are calculated by
\begin{equation}                                                                                                                                \label{Eq49}
\eta_i^{(\ell)} =\frac{1}{\sqrt{\sigma_i}} \ll [G^{(\ell)}] - [G_0^\OAPM] \, , [G_i]\gg \quad , \quad \ell = 1,\ldots , \nu_\KL \, .
\end{equation}

% STEP 5
\paragraph*{Step 5} For a given convergence tolerance with respect to $N$ and $N_g$ in the polynomial chaos expansion  defined by Eq.~\eqref{Eq38bis}, use of the methodology based on the maximum likelihood and the corresponding algorithms presented in \cite{Soize2010} for estimating a value $[y_0]\in\VV_m(\RR^N)$ of $[y]$ such that $\bfeta^\OAPM = [y_0]^T\,  \bfPsi(\bfXi)$.

Let us examine the following particular case for which the algebraic prior model $[\bfK^\APM]$ of random field $[\bfK]$ is defined by Eq.~\eqref{Eq2} with either Eq.~\eqref{Eq9} or Eq.~\eqref{Eq17bis}, in which $[\bfG^\APM]$ is chosen as a second-order Gaussian random field indexed by $D$ with values in $\MM_n^{\rm S}(\RR)$. Therefore,  the components $\eta_1, \ldots , \eta_m$ of the random vector $\bfeta$ defined by Eq.~\eqref{Eq38bis} are independent real-valued normalized Gaussian random variables. We then have  $N_g=m$. Let us assume that, for $1 \leq j \leq m \leq N$, the indices $j$ of the polynomial chaos are ordered such that  $\Psi_j(\bfXi) = \Xi_j$. It can then be deduced that $[y_0]\in\VV_m(\RR^N)$ is such that $[y_0]_{ji} = \delta_{ij}$ for $1\leq i\leq m$ and $1\leq j\leq N$.

% STEP 6
\paragraph*{Step 6} With the maximum likelihood method, estimation of a value $\check{\bfz}$ of $\bfz\in\RR^\nu$ for the parameterized general class $\curK$ defined by Eqs.~\eqref{Eq40} and \eqref{Eq46}, using the family of stochastic solutions $\bfU^\obs(\bfz) = \curB(\curK^{(m,N)}(\cdot,\bfXi,\curM_{[y_0]}(\bfz)))$ and the experimental data set  $\bfu^{\exper , 1}, \ldots , \bfu^{\exper, \nu_\experp} $. Then, calculation of $[\check y] = \curM_{[y_0]}(\check\bfz)$.

% STEP 7
\paragraph*{Step 7} Construction of a posterior model for random field $[\bfK]$ using the Bayesian method. In such a framework, the coefficients $[y]$ of the polynomial chaos expansion $\bfeta = [y]^T\,  \bfPsi(\bfXi)$ (see Eq.~\eqref{Eq38bis}) are modeled by a random matrix $[\bfY]$ (see \cite{Soize2009}) as proposed in \cite{Soize2011} and consequently, $\bfz$ is modeled by a $\RR^\nu$-valued random variable $\bfZ$. For the prior model $[\bfY^\prior]$ of $[\bfY]$, here we propose
\begin{equation}                                                                                                                                \label{Eq50}
[\bfY^\prior]  = \curM_{[{\check y}]}(\bfZ^\prior) \quad , \quad \bfZ^\prior \sim \hbox{centered Gaussian vector}\, ,
\end{equation}
which guaranties that $[\bfY^\prior]$ is a random matrix with values in $\VV_m(\RR^N)$
whose statistical fluctuations are centered around $[{\check y}]$ (the maximum likelihood estimator of the set of
coefficients $[y]$ computed at step 6).
The Bayesian update allows the posterior distribution of random vector $\bfZ^\post$ to be estimated using the stochastic solution $\bfU^\obs = \curB(\curK^{(m,N)}(\cdot,\bfXi,\curM_{[{\check y}]}(\bfZ^\prior)))$ and the experimental data set $\bfu^{\exper , 1}, \ldots , \bfu^{\exper, \nu_\experp} $.\\

Finally, it should be noted that once the probability distribution of $\bfZ^\post$ has been estimated by Step~7, $\nu_\KL$ independent realizations can be calculated for the random field $[\bfG^\post(\bfx)] =  [G_0^\OAPM(\bfx)] + \sum_{i=1}^{m} \sqrt{\sigma_i}\, [G_i(\bfx)] \, \eta^\post_i$
in which $\bfeta^\post = [\bfY^\post]^T\, \bfPsi(\bfXi)$ and where $[\bfY^\post] = \curM_{[{\check y}]}(\bfZ^\post)$. The identification procedure can then be restarted from Step~4 replacing $[\bfG^\OAPM]$ by $[\bfG^\post]$.

\section{Solution of the stochastic elliptic boundary value problem}
\label{Section4}

Let $D\subset \mathbb{R}^{d}$  be a bounded open domain with smooth boundary.
 The following elliptic
 stochastic partial differential equation is considered,
\begin{equation}
-{\rm div}([\bfK] \cdot \nabla U) = f\quad \text{a.e. in $D$}\, , \label{edps}
\end{equation}
with homogeneous Dirichlet boundary conditions (for the sake of simplicity). The random field $\{[\bfK(\bfx)], \bfx\in D\}$  belongs to the parameterized general class of random fields,
$$\curK=\{\curK^{(m,N)}(\cdot,\bfXi,\curM_{[a]}(\bfz)) \, ; \, \bfz\in \RR^{\nu} \}\, ,$$
introduced in Section \ref{Section3.2}.
This stochastic boundary value problem has to be solved at steps 6 and 7 of the identification
procedure described in Section \ref{Section3.3},  respectively considering $\bfz$ as a deterministic or a random
parameter.
%The set of random fields $\curK$ can be written
%$\curK = \{ [\bf](\bfx,\bfXi,\bfz) \, ; \, \bfz\in \RR^{\nu} \}$, where
%$[C] : D\times \mathbb{R}^{N_{g}} \times \mathbb{R}^{\nu} \rightarrow \MM_n^+(\RR)$  is defined by
%$$
%[C](\bfx,\bfy,\bfz)  =  \curK^{(m,N)}(\cdot,\bfy,\curM_{[a]}(\bfz))
%$$
We introduce the map $$u:D\times \mathbb{R}^{N_{g}} \times \mathbb{R}^{\nu} \rightarrow \RR $$
 such that $U = u(\bfx,\bfXi,\bfz)$ is the solution of the stochastic boundary value problem for  $[\bfK(\bfx)] = \curK^{(m,N)}(\bfx,\bfXi,\curM_{[a]}(\bfz))$.
 The aim is here to construct an explicit approximation  of the map $u$ for its efficient use in the
identification procedure.
\\

In this section,  a suitable functional framework  will first be introduced for the definition of the map $u$. Then, numerical methods based on Galerkin projections will be analyzed for the approximation of this map.
Different numerical approaches will be introduced depending on the type
of representation of random fields (square type or exponential type) and depending on the properties of approximation spaces.
For the two types of representation of random fields (exponential type or square type).
Finally, we will briefly describe complexity reduction methods based on low-rank approximations
that exploit the tensor structure of the high-dimensional map $u$ and allows its approximate representation to be obtained in high dimension. That makes affordable the application of the identification procedure for high-dimensional germs $\bfXi$ (high $N_{g}$) and high-order representation of random fields (high $\nu$).

\subsection{Analysis of the stochastic boundary value problem}

We denote by $\Gamma = \Gamma_{N_{g}} \otimes \Gamma_{\nu}$ a product measure on
 $\RR^{\mu} := \RR^{N_{g}} \times  \RR^{\nu}$, where  $\Gamma_{N_{g}}$ is the probability measure of random variable $\bfXi$ and where
 $\Gamma_{\nu}$ is a finite measure on $\RR^{\nu}$. Up to a normalization, $\Gamma_{\nu}$ is considered as the probability measure of a random vector $\bfZ$.
We denote by $[\Cdroit] : D\times \mathbb{R}^{N_{g}} \times \mathbb{R}^{\nu} \rightarrow \MM_n^+(\RR)$  the map defined by
$$
[\Cdroit](\bfx,\bfy,\bfz)  =  \curK^{(m,N)}(\bfx,\bfy,\curM_{[a]}(\bfz))\, ,
$$
and such that $[\bfK(\bfx)] = [\Cdroit](\bfx,\bfXi,\bfZ)$ is a $\sigma(\bfXi,\bfZ)$-measurable random field. Sometimes, the random field $\{[\Cdroit](\bfx,\bfXi,\bfZ),\bfx\in D\}$ will be denoted by $\{[\bfC(\bfx)],\bfx\in D\}$.
For a measurable function $h:\RR^{N_{g}} \times  \RR^{\nu}\rightarrow \RR$, the mathematical expectation of $h$ is defined by
 $$E_{\Gamma}(h) = E\{h(\bfXi,\bfZ)\}= \int_{\RR^{N_{g}}\times \RR^{\nu}} h(\bfy,\bfz) \, \Gamma(d\bfy,d\bfz)\, .$$
  %\begin{rem}
  %In step 6 of the identification procedure, $\Gamma_{\nu}$ can be taken as a Gaussian measure
  %\end{rem}
%
\begin{lemma}\label{lemma7} % Lemma 7
 Under the hypotheses of Proposition \ref{proposition4},
 there exists a constant $\alpha$  and a positive measurable function $\gamma : \RR^{N_{g}} \times  \RR^{\nu} \rightarrow \RR$ such that, for $\Gamma$-almost all $(\bfy,\bfz)$ in $\RR^{N_{g}} \times  \RR^{\nu}$, we have
\begin{equation}
0<\alpha \le \mathop{\rm ess \; inf}\limits_{\bfx \in D} \inf_{\bfh\in \RR^{n}\setminus \{0\}} \frac{<[\Cdroit](\bfx,\bfy,\bfz)\bfh,\bfh>_{2}}{\Vert \bfh \Vert_{2}^{2}} \, ,
\end{equation}
\begin{equation}
 \mathop{\rm ess \; sup}\limits_{\bfx \in D} \sup_{\bfh\in \RR^{n}\setminus \{0\}} \frac{<[\Cdroit](\bfx,\bfy,\bfz)\bfh,\bfh>_{2}}{\Vert \bfh \Vert_{2}^{2}} \le \gamma (\bfy,\bfz)<\infty \, .
\end{equation}
Moreover, for the class of random fields corresponding to the square type representation, we have
%
%\begin{equation}
$E_{\Gamma}(\gamma)\le \overline \gamma <+\infty,$ 
%\end{equation}
%
that means $\gamma\in L^{1}_{\Gamma}(\RR^{\mu})$.
\end{lemma}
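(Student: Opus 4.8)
The plan is to read off both pointwise bounds directly from the results already established for the discretized field in Section~\ref{Section2} and in Proposition~\ref{proposition4}, and then to deduce the integrability of $\gamma$ for the square type representation by Tonelli's theorem. First, for the lower bound, I would observe that $[\Cdroit](\bfx,\bfy,\bfz) = \curK^{(m,N)}(\bfx,\bfy,\curM_{[a]}(\bfz))$ is exactly the matrix produced by Eq.~\eqref{Eq2} in its discretized form \eqref{Eq33} or \eqref{Eq34}, with $[\bfG^{(m,N)}]$ evaluated at the deterministic pair $(\bfy,\bfz)$. For every $(\bfx,\bfy,\bfz)$ the corresponding matrix $[\bfK_0^{(m,N)}]$ is symmetric positive definite: for the exponential type it is the matrix exponential of the symmetric matrix $[\bfG^{(m,N)}(\bfx)]$, and for the square type it is of the form $[\curL([\bfG^{(m,N)}(\bfx)])]^T\,[\curL([\bfG^{(m,N)}(\bfx)])]$ with $[\curL(\cdot)]$ upper triangular with strictly positive diagonal, hence invertible. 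Therefore the computation in the proof of Lemma~\ref{lemma1}(ii) applies verbatim and gives $< \![\Cdroit](\bfx,\bfy,\bfz)\,\bfh\,,\bfh >_2 \ge \frac{\varepsilon}{1+\varepsilon}< \![\underline K(\bfx)]\,\bfh\,,\bfh >_2 \ge \underline k_{\,0}\,\frac{\varepsilon}{1+\varepsilon}\,\Vert\bfh\Vert_2^2$, uniformly in $\bfx$, $\bfy$, $\bfz$; one then takes $\alpha = \underline k_{\,0}\,\varepsilon/(1+\varepsilon)$, which settles the first inequality (in fact for all $(\bfy,\bfz)$, a fortiori $\Gamma$-almost all).

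For the upper bound, I would use that $[\Cdroit](\bfx,\bfy,\bfz)$ is symmetric, so $\sup_{\bfh\neq 0}< \![\Cdroit](\bfx,\bfy,\bfz)\,\bfh\,,\bfh >_2/\Vert\bfh\Vert_2^2 = \Vert[\Cdroit](\bfx,\bfy,\bfz)\Vert_2 \le \Vert[\Cdroit](\bfx,\bfy,\bfz)\Vert_F$. Proposition~\ref{proposition4} provides a measurable positive function $\gamma:\RR^{N_g}\times\RR^\nu\to\RR$ with $\Vert\bfK^{(m,N)}\Vert_F \le \gamma(\bfXi,\bfz)$ almost surely and for all $\bfz$; since the $\gamma$ built there is a function of $\delta(\bfXi,\bfz)$ only and does not depend on $\bfx$, transporting the almost sure statement through the law $\Gamma_{N_g}$ of $\bfXi$ yields $\Vert[\Cdroit](\bfx,\bfy,\bfz)\Vert_F \le \gamma(\bfy,\bfz)$ for $\Gamma_{N_g}$-almost every $\bfy$, every $\bfz$ and almost every $\bfx\in D$, hence for $\Gamma$-almost all $(\bfy,\bfz)$. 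Taking the essential supremum over $\bfx\in D$ gives the second inequality, and the required measurability of $\gamma$ is part of Proposition~\ref{proposition4}.

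Finally, for the square type representation, Proposition~\ref{proposition4} also gives $E\{\gamma(\bfXi,\bfz)\}\le \overline\gamma_0<+\infty$ for all $\bfz$, with $\overline\gamma_0$ independent of $N$ and $\bfz$. Since $\Gamma=\Gamma_{N_g}\otimes\Gamma_\nu$ and $\gamma\ge 0$, Tonelli's theorem gives $E_\Gamma(\gamma)=\int_{\RR^\nu}\big(\int_{\RR^{N_g}}\gamma(\bfy,\bfz)\,\Gamma_{N_g}(d\bfy)\big)\,\Gamma_\nu(d\bfz)=\int_{\RR^\nu}E\{\gamma(\bfXi,\bfz)\}\,\Gamma_\nu(d\bfz)\le \overline\gamma_0\,\Gamma_\nu(\RR^\nu)=:\overline\gamma<+\infty$, since $\Gamma_\nu$ is a finite measure (and $\overline\gamma=\overline\gamma_0$ if $\Gamma_\nu$ is normalized to a probability measure); this $\overline\gamma$ is again independent of $N$, and $\gamma\in L^1_\Gamma(\RR^\mu)$.

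There is no serious difficulty in this argument; it is essentially a transcription of Lemma~\ref{lemma1} and Proposition~\ref{proposition4} into the parametric setting. The two points that need care are (i) checking that $[\bfK_0^{(m,N)}]$ is genuinely positive definite for \emph{both} representation types, so that the $\bfx$-uniform lower bound of Lemma~\ref{lemma1}(ii) — which only used positive semidefiniteness of $[\bfK_0]$ and the left inequality of Eq.~\eqref{Eq1} — transfers with a constant independent of $(\bfy,\bfz)$; and (ii) the measure-theoretic bookkeeping that turns the ``almost surely'' statements of Section~\ref{Section2} and Proposition~\ref{proposition4}, formulated on the abstract probability space $(\Theta,\curT,\curP)$, into statements valid $\Gamma$-almost everywhere on $\RR^{N_g}\times\RR^\nu$ via the pushforward by $\bfXi$ (and the independence encoded in $\Gamma=\Gamma_{N_g}\otimes\Gamma_\nu$).
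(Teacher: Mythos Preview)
Your proof is correct and follows essentially the same approach as the paper: the lower bound comes from Lemma~\ref{lemma1}(ii) with $\alpha=\underline{k}_\varepsilon$, the upper bound from $\Vert\cdot\Vert_2\le\Vert\cdot\Vert_F$ combined with Proposition~\ref{proposition4}, and the $L^1_\Gamma$ bound from integrating the uniform-in-$\bfz$ estimate \eqref{boundgamma} against $\Gamma_\nu$. Your version is in fact more carefully written than the paper's, in particular in distinguishing the constant $\overline\gamma_0$ of Proposition~\ref{proposition4} from $\overline\gamma=\overline\gamma_0\,\Gamma_\nu(\RR^\nu)$ when $\Gamma_\nu$ is merely finite, and in spelling out the pushforward step from $(\Theta,\curT,\curP)$ to $\RR^{N_g}\times\RR^\nu$.
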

\begin{proof}
Lemma \ref{lemma1}(ii) gives the existence of the lower bound $\alpha = \underline{k}_{\varepsilon} $.  Proposition~\ref{proposition4} yields $\Vert [\Cdroit](\bfx,\bfy,\bfz)\Vert_{2} \le \Vert [\Cdroit] (\bfx,\bfy,\bfz)\Vert_{F} \le \gamma(\bfy,\bfz)<\infty$,
 with $\gamma$ a measurable function defined  on $\RR^{N_{g}}\times \RR^{\nu}$. For the square type representation of random fields, property \eqref{boundgamma} implies $E_{\Gamma}(\gamma) = \int_{\RR^{\nu}} E_{\Gamma_{N_g}}\{\gamma(\bfXi,\bfz) \}\Gamma_{\nu}(d\bfz) \le \overline{\gamma}$.
 \end{proof}
%
%
%From now on, it is assumed that the hypotheses of Proposition \ref{proposition4} are satisfied. 
We introduce the bilinear form
 $C(\cdot,\cdot;\bfy,\bfz):H_{0}^{1}(D)\times H_{0}^{1}(D)\rightarrow \RR$ defined by
\begin{equation}
C(u,v;\bfy,\bfz)  = \int_{D} \nabla v \cdot [\Cdroit](\cdot,\bfy,\bfz)  \nabla u \, d\bfx \, .
\end{equation}
Let introduce $\Vert \cdot\Vert_{H^{1}_{0}}=\left(\int_{D} \vert\nabla (\cdot)\vert^{2}\, d\bfx\right)^{1/2}$ the norm on $H^{1}_{0}(D)$, and
$\Vert\cdot\Vert_{H^{-1}}$ the norm on the continuous dual space $H^{-1}(D)$.
\paragraph{Strong-stochastic solution}
\ 
\begin{proposition}\label{proposition5}  %Proposition 5
Assume $f \in H^{-1}(D)$. Then, for $\Gamma$-almost all $(\bfy,\bfz)$ in $\RR^{N_{g}}\times\RR^{\nu}$, there exists a unique
$u(\cdot,\bfy,\bfz)\in H_{0}^{1}(D)$ such that
\begin{equation}
C(u(\cdot,\bfy,\bfz),v;\bfy,\bfz)  = f(v) \quad \text{for all } v\in H_{0}^{1}(D)\, , \label{eq:strongstochasticform}
\end{equation}
and
\begin{equation}
\Vert u(\cdot,\bfy,\bfz)   \Vert_{H^{1}_{0}}\le \frac{1}{\alpha} \Vert f\Vert_{H^{-1}}\, . \label{eq:continuity_almostsure}
\end{equation}
\end{proposition}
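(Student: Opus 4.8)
The plan is to apply the Lax--Milgram theorem pointwise in the parameter $(\bfy,\bfz)$. First I would fix $(\bfy,\bfz)$ in the set of full $\Gamma$-measure on which the two estimates of Lemma~\ref{lemma7} hold: the uniform (in $\bfx$) lower bound by $\alpha = \underline k_{\, \varepsilon}>0$ and the finite upper bound $\mathop{\rm ess\, sup}_{\bfx\in D}\Vert[\Cdroit](\bfx,\bfy,\bfz)\Vert_2 \le \gamma(\bfy,\bfz)<\infty$. Observe that $\alpha$ is independent of $(\bfy,\bfz)$ while $\gamma(\bfy,\bfz)$ is not; only the former will enter the a priori estimate, which is why the statement can afford a possibly nonintegrable $\gamma$.

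Next I would verify the hypotheses of Lax--Milgram for the bilinear form $C(\cdot,\cdot;\bfy,\bfz)$ on $H_0^1(D)$, endowed with the norm $\Vert\cdot\Vert_{H^1_0}$; since $D$ is bounded, the Poincar\'e inequality guarantees that this is a norm equivalent to the full $H^1$-norm and that $f\in H^{-1}(D)$ defines a bounded linear functional on $(H_0^1(D),\Vert\cdot\Vert_{H^1_0})$. Coercivity is immediate from the lower bound: for such $(\bfy,\bfz)$ and every $v\in H_0^1(D)$,
\[
C(v,v;\bfy,\bfz)=\int_D \nabla v\cdot[\Cdroit](\cdot,\bfy,\bfz)\nabla v\,d\bfx \ \ge\ \alpha\int_D \vert\nabla v\vert^2\,d\bfx=\alpha\,\Vert v\Vert_{H^1_0}^2 .
\]
Continuity follows from the upper bound together with the Cauchy--Schwarz inequality, $\vert C(u,v;\bfy,\bfz)\vert\le\int_D\Vert[\Cdroit](\bfx,\bfy,\bfz)\Vert_2\,\vert\nabla u\vert\,\vert\nabla v\vert\,d\bfx\le\gamma(\bfy,\bfz)\,\Vert u\Vert_{H^1_0}\Vert v\Vert_{H^1_0}$. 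Lax--Milgram then provides, for $\Gamma$-almost all $(\bfy,\bfz)$, a unique $u(\cdot,\bfy,\bfz)\in H_0^1(D)$ solving \eqref{eq:strongstochasticform}. The bound \eqref{eq:continuity_almostsure} is obtained by choosing $v=u(\cdot,\bfy,\bfz)$ in \eqref{eq:strongstochasticform} and invoking coercivity: $\alpha\Vert u(\cdot,\bfy,\bfz)\Vert_{H^1_0}^2\le C(u(\cdot,\bfy,\bfz),u(\cdot,\bfy,\bfz);\bfy,\bfz)=f(u(\cdot,\bfy,\bfz))\le\Vert f\Vert_{H^{-1}}\Vert u(\cdot,\bfy,\bfz)\Vert_{H^1_0}$, whence the claim after dividing through (the case $u(\cdot,\bfy,\bfz)=0$ being trivial).

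I do not anticipate a genuine obstacle here: this is the textbook Lax--Milgram argument, the only work being bookkeeping of the constants coming from Proposition~\ref{proposition4} and Lemma~\ref{lemma7}. The one point deserving emphasis is that the continuity constant $\gamma(\bfy,\bfz)$ is a random, possibly unbounded quantity --- in particular nonintegrable for the exponential-type representation --- so what one obtains is a \emph{strong-stochastic} solution, i.e.\ a pointwise-in-$(\bfy,\bfz)$ statement, rather than a solution belonging to a Bochner space such as $L^2_\Gamma(\RR^{\mu};H_0^1(D))$. Upgrading to such a statement would require the integrability of $\gamma$ (which Lemma~\ref{lemma7} provides only in the square-type case) together with a separate joint-measurability argument for the map $(\bfy,\bfz)\mapsto u(\cdot,\bfy,\bfz)$, and I would expect that to be carried out in a subsequent proposition.
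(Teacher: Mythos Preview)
Your proposal is correct and follows exactly the paper's approach: a pointwise application of the Lax--Milgram theorem using the coercivity and continuity bounds supplied by Lemma~\ref{lemma7}. Your additional commentary on the distinction between the strong-stochastic and weak-stochastic solutions is apt, and indeed the paper addresses the latter in the subsequent Proposition~\ref{prop:well-posed}.
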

\begin{proof}
Lemma \ref{lemma7} ensures the continuity and coercivity of bilinear form $C(\cdot,\cdot;\bfy,\bfz)$ for $\Gamma$-almost all $(\bfy,\bfz)$ in $\RR^{N_{g}}\times\RR^{\nu}$. The proof follows from a direct application of the Lax-Milgram theorem.
\end{proof}

\paragraph{Weak-stochastic solution}
Let  be $L^{2}_{\Gamma}(\RR^{\mu})=L^{2}_{\Gamma_{N_{g}}}(\RR^{N_{g}}) \otimes L^{2}_{\Gamma_{\nu}}(\RR^{\nu}) $ and
$X = H_{0}^{1}(D) \otimes L^{2}_{\Gamma}(\RR^{\mu})$. Then $X$ is a Hilbert space for the inner
 product norm $\Vert\cdot\Vert_{X}$ defined by
$$ \Vert v\Vert_X^2= {E}_{\Gamma}\big(\int_D \Vert\nabla v \Vert_{2}^2 \, d\bfx \big). $$
We also introduce the spaces $X^{(\gamma^{s}{\bfC}^{r})}$ ($s,r\in \NN$)
of functions $v:D\times \RR^{\mu}\rightarrow\RR$ with bounded norm
$$
 \Vert v\Vert_{X^{(\gamma^{s}{\bfC}^{r})}}=\Big\{{E}_{\Gamma}\big( \gamma^{s} \int_D
 \nabla v \cdot [\Cdroit]^{r} \nabla v \, d\bfx\big)\Big\}^{{1/2}}.
$$
%Lemma \ref{lemma7} allows to prove the following
\begin{lemma}\label{lemma8}
We have
$\alpha^{{3/2}} \Vert v\Vert_{X} \le  \alpha \Vert v\Vert_{X^{(\bfC)}} \le \Vert v\Vert_{X^{({\bfC}^2)}} \le \Vert v\Vert_{X^{(\gamma\bfC)}}\le \Vert v\Vert_{X^{(\gamma^2)}},$ and therefore
$$X^{(\gamma^2)}\subset X^{(\gamma\bfC)} \subset X^{({\bfC}^2)}\subset X^{(\bfC)} \subset X$$
with dense embeddings.
\end{lemma}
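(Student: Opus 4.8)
The plan is to reduce the whole norm chain to a pointwise statement about the symmetric positive-definite matrix $M := [\Cdroit](\bfx,\bfy,\bfz)$ and then integrate. By Lemma~\ref{lemma7}, for $\Gamma$-almost all $(\bfy,\bfz)$ and for almost all $\bfx\in D$ one has, in the order of symmetric matrices, $\alpha\,[I_n]\le M\le\gamma(\bfy,\bfz)\,[I_n]$. From this I would extract the four operator inequalities $\alpha\,[I_n]\le M$, $\alpha M\le M^{2}$, $M^{2}\le\gamma M$ and $\gamma M\le\gamma^{2}[I_n]$. The middle two come from the elementary monotonicity fact that $0\le[A]\le[B]$ implies $[P]^{T}[A][P]\le[P]^{T}[B][P]$, applied with $[P]=M^{1/2}$ to $\alpha[I_n]\le M$ and to $M\le\gamma[I_n]$ respectively; the last one from multiplying $M\le\gamma[I_n]$ by the scalar $\gamma\ge 0$.

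Contracting each of these inequalities with $\bfh=\nabla v(\bfx,\bfy,\bfz)$, integrating over $\bfx\in D$ and then taking $E_{\Gamma}$ yields $\alpha\Vert v\Vert_{X}^{2}\le\Vert v\Vert_{X^{(\bfC)}}^{2}$, $\alpha\Vert v\Vert_{X^{(\bfC)}}^{2}\le\Vert v\Vert_{X^{(\bfC^{2})}}^{2}$, $\Vert v\Vert_{X^{(\bfC^{2})}}^{2}\le\Vert v\Vert_{X^{(\gamma\bfC)}}^{2}$ and $\Vert v\Vert_{X^{(\gamma\bfC)}}^{2}\le\Vert v\Vert_{X^{(\gamma^{2})}}^{2}$. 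The point to be careful about here is that $\gamma=\gamma(\bfy,\bfz)$ does not depend on $\bfx$, so it passes freely through $\int_{D}(\cdot)\,d\bfx$, and is non-negative, so it preserves the inequality before $E_{\Gamma}$ is applied. Taking square roots and assembling the four estimates, using $\alpha\le 1$ without loss of generality (the coercivity constant may always be lowered), gives exactly $\alpha^{3/2}\Vert v\Vert_{X}\le\alpha\Vert v\Vert_{X^{(\bfC)}}\le\Vert v\Vert_{X^{(\bfC^{2})}}\le\Vert v\Vert_{X^{(\gamma\bfC)}}\le\Vert v\Vert_{X^{(\gamma^{2})}}$, and hence the continuous inclusions $X^{(\gamma^{2})}\subset X^{(\gamma\bfC)}\subset X^{(\bfC^{2})}\subset X^{(\bfC)}\subset X$.

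For density of each space in the next, I would use a single truncation device in the $(\bfy,\bfz)$-variables: given $v$ in the larger space, set $v_{n}:=v\,\mathbf{1}_{\{\gamma\le n\}}$, so that $\nabla v_{n}=(\nabla v)\,\mathbf{1}_{\{\gamma\le n\}}$. On the set $\{\gamma\le n\}$ the weight $\gamma$ is bounded by $n$, so the norm inequalities just established show that $v_{n}$ belongs to the smaller space, with its norm there bounded by $(n/\alpha)^{1/2}$ times the norm of $v$ in the larger space, hence finite. Moreover $\Vert v-v_{n}\Vert^{2}$ in the larger-space norm equals the integral of a fixed $\Gamma$-integrable density over $\{\gamma>n\}$, which tends to $0$ as $n\to\infty$ by dominated convergence, because $\gamma<+\infty$ $\Gamma$-almost everywhere (Lemma~\ref{lemma7}), so the sets $\{\gamma>n\}$ shrink to a $\Gamma$-null set. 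Applying this to each consecutive pair gives all the dense embeddings. The only genuinely delicate point in the whole argument is that $\gamma$ is in general unbounded --- it lies in $L^{1}_{\Gamma}$ only in the square-type case, and need not be integrable at all for the exponential-type representation --- so it cannot be handled as a constant; the truncation together with dominated convergence is precisely what circumvents this. Everything else, namely the operator-monotonicity identities and the interchange of $\int_{D}$, of the weight $\gamma$, and of $E_{\Gamma}$, is routine.
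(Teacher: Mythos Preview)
Your argument is correct and follows essentially the same route as the paper, which merely says that the norm inequalities ``are easily deduced from the properties of $[\Cdroit]$ (Lemma~\ref{lemma7})'' and that ``$X^{(\gamma^2)}$ is dense in $X$, which proves the density of other embeddings.'' You have simply filled in the details the paper omits: the pointwise matrix inequalities $\alpha[I_n]\le M$, $\alpha M\le M^2$, $M^2\le\gamma M$, $\gamma M\le\gamma^2[I_n]$ and the truncation $v\mapsto v\,\mathbf{1}_{\{\gamma\le n\}}$ for density.

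Two minor remarks. First, your matrix inequalities actually give the sharper constants $\sqrt{\alpha}$ rather than $\alpha$ (e.g.\ $\sqrt{\alpha}\,\Vert v\Vert_{X^{(\bfC)}}\le\Vert v\Vert_{X^{(\bfC^2)}}$), so the ``$\alpha\le 1$ without loss of generality'' step is only needed to match the exact constants printed in the lemma; the embeddings themselves hold regardless. Second, the paper streamlines the density argument by proving only $X^{(\gamma^2)}$ dense in $X$; since all intermediate spaces sit between these two with continuous inclusions, and since for $v$ in any intermediate space $B$ the same truncation $v_n=v\,\mathbf{1}_{\{\gamma\le n\}}$ lands in $X^{(\gamma^2)}$ and satisfies $\Vert v-v_n\Vert_B\to 0$ by dominated convergence, all the dense embeddings follow at once. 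Your pair-by-pair version is equivalent.
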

\begin{proof}
The inequalities satisfied by the norms are easily deduced from the properties of $[\Cdroit]$
(Lemma \ref{lemma7}). Then, it can easily be proven that $X^{(\gamma^2)}$ is dense
in $X$, which proves the density of other embeddings.
\end{proof}
Let introduce the bilinear form $a:X\times X\rightarrow
\mathbb{R}$ defined by
$$
a(u,v) = E_{\Gamma}(C(u,v)) = \int_{\RR^\mu} \left(
 \int_{D} \nabla v \cdot [\Cdroit](\cdot,\bfy,\bfz)  \nabla u \, d\bfx\right) \Gamma(d\bfy,d\bfz)\, ,
$$
and the linear form $F$ belonging to the continuous dual space $X'$ of $X$, defined by
$$
\left<F,v\right> = E_{\Gamma}(f(v))=\int_{\RR^\mu} f(v(\cdot,\bfy,\bfz)) \Gamma(d\bfy,d\bfz)\, .
$$
From Lemma \ref{lemma7}, it can easily be deduced  the
\begin{lemma}\label{lemma9}
$a:X\times X \rightarrow \mathbb{R}$ is a symmetric bilinear form such that:
\begin{enumerate}[(ii)]\renewcommand{\theenumi}{\roman{enumi}}

\item \label{continuity}
$a$ is continuous from
$X^{({\bfC}^{2})}\times X$ to $\RR$,
\begin{equation}\vert a(u,v)\vert \le \Vert u\Vert_{X^{({\bfC}^{2})}}\Vert v\Vert_{X} \quad \forall (u,v)\in
X^{({\bfC}^{2})}\times X \, .\label{eq:continuity}
\end{equation}
\item $a$ is continuous from
$X^{(\bfC)}\times X^{(\bfC)}$ to $\RR$,
\begin{equation}\vert a(u,v)\vert \le \Vert u\Vert_{X^{(\bfC)}}\Vert v\Vert_{X^{(\bfC)}} \quad \forall (u,v)\in X^{(\bfC)}\times X^{(\bfC)}\, . \label{eq:continuityC}
\end{equation} \item \label{coercivity}$a$
is coercive,
\begin{equation} a(v,v)  \ge \alpha \Vert v \Vert_X^2 \quad \forall v\in X\, .\label{eq:coercivity}
\end{equation}
\end{enumerate}
\end{lemma}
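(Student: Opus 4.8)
The plan is to derive all three statements, together with the symmetry, by successive applications of the Cauchy--Schwarz inequality at three nested levels: pointwise in $\RR^n$ (for the gradient vectors), in $L^2(D)$ (integration over $\bfx$), and in $L^2_\Gamma(\RR^\mu)$ (integration against $\Gamma$). Symmetry is immediate: since $[\Cdroit](\bfx,\bfy,\bfz)$ belongs to $\MM_n^+(\RR)$ and is therefore symmetric for $\Gamma$-almost all $(\bfy,\bfz)$ and almost all $\bfx$, the integrand satisfies $\nabla v\cdot [\Cdroit]\nabla u = \nabla u\cdot[\Cdroit]\nabla v$, whence $a(u,v)=a(v,u)$; bilinearity is clear from the definition. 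Coercivity (iii) is equally direct: the lower bound of Lemma~\ref{lemma7} gives $\nabla v\cdot[\Cdroit](\cdot,\bfy,\bfz)\nabla v \ge \alpha\,\Vert\nabla v\Vert_2^2$ pointwise, and integrating over $D$ and then against $\Gamma$ yields $a(v,v)\ge \alpha\, E_{\Gamma}(\int_D \Vert\nabla v\Vert_2^2\,d\bfx)=\alpha\Vert v\Vert_X^2$, which is Eq.~\eqref{eq:coercivity}.

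For the continuity estimate (i), the key algebraic observation is that, $[\Cdroit]$ being symmetric, $\Vert[\Cdroit]\nabla u\Vert_2^2 = \nabla u\cdot[\Cdroit]^2\nabla u$. Thus I would first bound the integrand pointwise by the $\RR^n$ Cauchy--Schwarz inequality,
\begin{equation}
\vert \nabla v\cdot [\Cdroit]\nabla u\vert \le \Vert\nabla v\Vert_2\,\Vert[\Cdroit]\nabla u\Vert_2 = \Vert\nabla v\Vert_2\,(\nabla u\cdot[\Cdroit]^2\nabla u)^{1/2}, \nonumber
\end{equation}
then apply the Cauchy--Schwarz inequality in $L^2(D)$ to the product $\Vert\nabla v\Vert_2\cdot(\nabla u\cdot[\Cdroit]^2\nabla u)^{1/2}$, and finally the Cauchy--Schwarz inequality in $L^2_\Gamma$ to the two resulting $\bfx$-integrals. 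The three steps chain together to give $\vert a(u,v)\vert \le \Vert v\Vert_X\,\Vert u\Vert_{X^{(\bfC^2)}}$, which is Eq.~\eqref{eq:continuity}.

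For the continuity estimate (ii), the relevant observation is that, for fixed $(\bfx,\bfy,\bfz)$, the map $(\bfp,\bfq)\mapsto \bfp\cdot[\Cdroit]\bfq$ is itself a positive semidefinite inner product on $\RR^n$, so it satisfies its own Cauchy--Schwarz inequality $\vert\nabla v\cdot[\Cdroit]\nabla u\vert \le (\nabla u\cdot[\Cdroit]\nabla u)^{1/2}(\nabla v\cdot[\Cdroit]\nabla v)^{1/2}$. Integrating over $D$ via Cauchy--Schwarz in $L^2(D)$ and then over $\RR^\mu$ via Cauchy--Schwarz in $L^2_\Gamma$ produces $\vert a(u,v)\vert \le \Vert u\Vert_{X^{(\bfC)}}\Vert v\Vert_{X^{(\bfC)}}$, which is Eq.~\eqref{eq:continuityC}.

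The step that requires the most care --- though it is not deep --- is ensuring that $a(u,v)$ is well defined and finite on the indicated product spaces, given that $[\Cdroit]$ is not uniformly bounded (Lemma~\ref{lemma7} only guarantees $\gamma\in L^1_\Gamma$, and that only for the square type representation). Here the continuity estimates are self-justifying: each chain of Cauchy--Schwarz inequalities bounds $\int_{\RR^\mu}\int_D \vert\nabla v\cdot[\Cdroit]\nabla u\vert\,d\bfx\,\Gamma(d\bfy,d\bfz)$ by a product of finite norms, which legitimates the use of Fubini's theorem and shows that the double integral defining $a(u,v)$ converges absolutely. I would therefore record the joint measurability of the integrand and its absolute convergence as a preliminary remark, after which the three estimates follow from the Cauchy--Schwarz computations above with no further obstacle.
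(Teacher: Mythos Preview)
Your proof is correct and is exactly the standard argument the paper has in mind: the paper's own ``proof'' consists of the single phrase ``From Lemma~\ref{lemma7}, it can easily be deduced,'' and your three-level Cauchy--Schwarz chain (pointwise in $\RR^n$, then $L^2(D)$, then $L^2_\Gamma(\RR^\mu)$) is precisely the routine computation that fills this in. Your additional care about absolute integrability and Fubini is appropriate and does not depart from the intended approach.
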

Now we introduce a weak form of the parameterized stochastic boundary value
problem:
\begin{equation}
\begin{split}
&\text{Find $u\in X$ such that}\\
&a(u,v) = F(v) \quad \forall v\in X \, .\end{split}\label{eq:prob_var}
\end{equation}
We have the following result.
\begin{proposition}\label{prop:well-posed}
There exists a unique solution $u\in X$ to problem \eqref{eq:prob_var}, and
%
%\begin{equation}
$\Vert u\Vert_X \le \frac{1}{\alpha} \Vert F
\Vert_{X'}$.
% \, .\label{eq:solution_continuity} \end{equation}
%
Moreover, $u$ verifies \eqref{eq:strongstochasticform} $\Gamma$-almost surely.
\end{proposition}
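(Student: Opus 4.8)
The plan is to obtain the unique weak solution by passing from the $\Gamma$-pointwise solutions of Proposition~\ref{proposition5} to an element of $X$, rather than by invoking the Lax--Milgram theorem directly on $X$: by Lemma~\ref{lemma9}, $a$ is continuous only from $X^{({\bfC}^{2})}\times X$ (not from $X\times X$), because the coefficient $[\Cdroit]$ is not uniformly bounded, so the whole argument must be run inside the scale of weighted spaces of Lemma~\ref{lemma8}, exploiting that the ellipticity constant $\alpha$ coming from Lemma~\ref{lemma1}(ii) and Lemma~\ref{lemma7} is independent of $(\bfy,\bfz)$.

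First I would check that the pointwise solution defines an element of $X^{({\bfC}^{2})}$. By Proposition~\ref{proposition5}, for $\Gamma$-a.e.\ $(\bfy,\bfz)$ there is a unique $u(\cdot,\bfy,\bfz)\in H_{0}^{1}(D)$ solving \eqref{eq:strongstochasticform} with the \emph{uniform} bound $\Vert u(\cdot,\bfy,\bfz)\Vert_{H^{1}_{0}}\le\alpha^{-1}\Vert f\Vert_{H^{-1}}$. Measurability of $(\bfy,\bfz)\mapsto u(\cdot,\bfy,\bfz)$ is obtained in the standard way, e.g.\ because on a fixed nested sequence of finite-dimensional subspaces of $H_{0}^{1}(D)$ the Galerkin solutions depend continuously, hence measurably, on the matrix $[\Cdroit(\cdot,\bfy,\bfz)]$ and converge to $u$. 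Since $\Gamma$ is a finite measure, this bounded measurable family belongs to $X$. To place $u$ in $X^{({\bfC}^{2})}$, I would use, for $\Gamma$-a.e.\ $(\bfy,\bfz)$ and writing $u=u(\cdot,\bfy,\bfz)$, the chain $\int_{D}\nabla u\cdot[\Cdroit]^{2}\nabla u\,d\bfx\le\gamma(\bfy,\bfz)\int_{D}\nabla u\cdot[\Cdroit]\nabla u\,d\bfx=\gamma(\bfy,\bfz)\,C(u,u;\bfy,\bfz)=\gamma(\bfy,\bfz)\,f(u)\le\alpha^{-1}\Vert f\Vert_{H^{-1}}^{2}\,\gamma(\bfy,\bfz)$, where the first inequality uses $\Vert[\Cdroit(\bfx,\bfy,\bfz)]\Vert_{2}\le\gamma(\bfy,\bfz)$ for a.e.\ $\bfx$ (Lemma~\ref{lemma7}). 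Integrating over $\Gamma$ and invoking $E_{\Gamma}(\gamma)\le\overline\gamma<+\infty$ from Proposition~\ref{proposition4} (the square-type class; for the exponential type this step rests on the integrability of $\gamma$ available under the running hypotheses) gives $\Vert u\Vert_{X^{({\bfC}^{2})}}^{2}\le\alpha^{-1}\overline\gamma\,\Vert f\Vert_{H^{-1}}^{2}<+\infty$, and by Lemma~\ref{lemma9}(i) the functional $v\mapsto a(u,v)$ is then continuous on all of $X$.

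Next I would verify the weak equation, the bound and uniqueness. For $v$ in the dense subspace $X^{(\gamma^{2})}\subset X$ (Lemma~\ref{lemma8}), testing \eqref{eq:strongstochasticform} with $v(\cdot,\bfy,\bfz)$ gives the pointwise identity $C(u(\cdot,\bfy,\bfz),v(\cdot,\bfy,\bfz);\bfy,\bfz)=f(v(\cdot,\bfy,\bfz))$; the left-hand side is dominated by $\alpha^{-1}\Vert f\Vert_{H^{-1}}\,\gamma(\bfy,\bfz)\Vert v(\cdot,\bfy,\bfz)\Vert_{H^{1}_{0}}$, which lies in $L^{1}_{\Gamma}$ since $\gamma\Vert v\Vert_{H^{1}_{0}}\in L^{2}_{\Gamma}$ by the definition of $X^{(\gamma^{2})}$ and $\Gamma$ is finite. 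Integrating against $\Gamma$ yields $a(u,v)=F(v)$ for all $v\in X^{(\gamma^{2})}$, and since both sides are continuous on $X$ ($F\in X'$, and $a(u,\cdot)$ by the previous paragraph) and $X^{(\gamma^{2})}$ is dense in $X$, the identity extends to all $v\in X$. Taking $v=u$ and using coercivity (Lemma~\ref{lemma9}(iii)), $\alpha\Vert u\Vert_{X}^{2}\le a(u,u)=F(u)\le\Vert F\Vert_{X'}\Vert u\Vert_{X}$, whence $\Vert u\Vert_{X}\le\alpha^{-1}\Vert F\Vert_{X'}$. For uniqueness, I would show that any $u\in X$ solving \eqref{eq:prob_var} satisfies \eqref{eq:strongstochasticform} for $\Gamma$-a.e.\ $(\bfy,\bfz)$: testing with $v(\bfx,\bfy,\bfz)=\phi(\bfx)\,\mathbf{1}_{B}(\bfy,\bfz)$ for $\phi\in H_{0}^{1}(D)$ and $B$ measurable with $\gamma$ bounded on $B$ gives $\int_{B}\big(C(u(\cdot,\bfy,\bfz),\phi;\bfy,\bfz)-f(\phi)\big)\,\Gamma(d\bfy,d\bfz)=0$; letting $B$ range over a generating algebra intersected with the sets $\{\gamma\le k\}$, $k\to\infty$, and $\phi$ over a countable dense subset of $H_{0}^{1}(D)$, forces $u(\cdot,\bfy,\bfz)$ to solve \eqref{eq:strongstochasticform} for $\Gamma$-a.e.\ $(\bfy,\bfz)$. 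This is precisely the last assertion of the proposition, and uniqueness then follows from the pointwise uniqueness in Proposition~\ref{proposition5}.

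The heart of the matter is the non-uniform boundedness of $[\Cdroit]$, which rules out the textbook Lax--Milgram argument on $X$; the proof is held together by the \emph{uniform} lower bound $\alpha$ (which turns the pointwise solutions into a bounded, hence $X$-valued, family and produces the a priori estimate) together with the integrability $E_{\Gamma}(\gamma)\le\overline\gamma<+\infty$ of Proposition~\ref{proposition4}, which is exactly what places $u$ in $X^{({\bfC}^{2})}$ so that $a(u,\cdot)$ is even defined on all of $X$. The one routine-but-nontrivial ingredient that should not be glossed over is the measurability of the parameter-to-solution map, since the passage from the pointwise identities to the integrated weak equation relies on it.
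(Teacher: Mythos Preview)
Your approach is genuinely different from the paper's. The paper argues abstractly: defining $A:D(A)\subset X\to X'$ by $Au=a(u,\cdot)$, it notes that $D(A)\supset X^{(\bfC^{2})}$ is dense in $X$ (Lemma~\ref{lemma8}), that coercivity gives $\Vert Av\Vert_{X'}\ge\alpha\Vert v\Vert_{X}$ and hence injectivity with closed range for both $A$ and $A^{*}$, and then invokes the Banach closed range theorem to obtain surjectivity. No integrability of $\gamma$ is ever used. Your constructive route --- assembling $u$ from the pointwise solutions of Proposition~\ref{proposition5} --- is more concrete and has the merit of actually proving the ``moreover'' clause, which the paper's short proof does not explicitly address.

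There is, however, a genuine gap. Your step placing $u$ in $X^{(\bfC^{2})}$ relies on $E_{\Gamma}(\gamma)<+\infty$, but Lemma~\ref{lemma7} (and Proposition~\ref{proposition4}) asserts this \emph{only for the square type} representation; for the exponential type no integrability of $\gamma$ is available anywhere in the paper, so your parenthetical ``for the exponential type this step rests on the integrability of $\gamma$ available under the running hypotheses'' is unsupported. Without $u\in X^{(\bfC^{2})}$, Lemma~\ref{lemma9}(i) no longer gives $a(u,\cdot)\in X'$, and your density extension from $X^{(\gamma^{2})}$ to $X$ collapses. The fix is simple and actually shortens your argument: for \emph{any} $v\in X$ the pointwise identity $C(u(\cdot,\bfy,\bfz),v(\cdot,\bfy,\bfz);\bfy,\bfz)=f(v(\cdot,\bfy,\bfz))$ already holds $\Gamma$-a.e., and the right-hand side is dominated by $\Vert f\Vert_{H^{-1}}\Vert v(\cdot,\bfy,\bfz)\Vert_{H^{1}_{0}}\in L^{1}_{\Gamma}$ (since $v\in X$ and $\Gamma$ is finite), so you may integrate directly to obtain $a(u,v)=F(v)$ for every $v\in X$, with no detour through $X^{(\bfC^{2})}$ and no hypothesis on $\gamma$. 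With this correction your proof covers both representations; your localisation argument with $v=\phi\,\mathbf{1}_{B}$ is then exactly what is needed for the ``moreover'' clause (uniqueness itself already follows from coercivity, Lemma~\ref{lemma9}(iii), by testing with $v=u_{1}-u_{2}$).
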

\begin{proof}
The existence and uniqueness of  solution can be deduced from a general result obtained in \cite{Mugler2011}.
Here we provide a short  proof for completeness sake. We denote by $A : D(A)\subset X \rightarrow X'$ the linear operator
defined by $Au = a(u,\cdot)$, $D(A)$ being the domain of $A$. We
introduce the adjoint operator $A^*: D(A^*)\subset X \rightarrow X'$
defined by $\left<u,A^*v\right> = \left<Au,v\right>$ for all $u\in
D(A)$ and $v\in D(A^*)$, with $D(A^*)=\{v\in X; \exists c>0\text{
such that } |\left<Au,v\right>|\le c \Vert u\Vert_X\text{ for all }
u\in D(A)\}$. The continuity property \eqref{eq:continuity} implies that $A$
is continuous from $X^{({\bfC}^2)}$ to $X'$ and therefore,
$D(A)\supset X^{({\bfC}^{2})}$ is dense in $X$ (using Lemma \ref{lemma8}). That means that $A$ is
densely defined.
%Since $a$ is symmetric, assumption \eqref{th:isomorphism_i} also
%implies that $A$ is densely defined.
The coercivity property \eqref{eq:coercivity} implies that $
\Vert A v\Vert_{X'} \ge \alpha \Vert v\Vert_X $
for all $v\in X$, which implies that $A$ is injective and the range $R(A)$ of $A$ is
closed. It also implies that $A^*$ is injective and $R(A^*)$ is closed,
and by the Banach closed range theorem, we then have that $A$ is
surjective, which proves the existence of a unique solution. Then, 
using the coercivity of $a$, we simply obtain 
$
\Vert u\Vert_X^2 \le \frac{1}{\alpha} a(u,u) = \frac{1}{\alpha}
F(u) \le \frac{1}{\alpha} \Vert F\Vert_{X'}\Vert u\Vert_X.
$
\end{proof}

\begin{proposition}\label{proposition7}
The solution $u$ of problem \eqref{eq:prob_var} is such that:
\begin{enumerate}[(iii)]\renewcommand{\theenumi}{\roman{enumi}}
\item  $u\in X^{({\bfC})}$,
\item $u\in X^{(\gamma\bfC)}$ if $\gamma\in L^{1}_{\Gamma}(\RR^{\mu})$,
\item $u\in X^{(\gamma^2)}$ if $\gamma\in L^{2}_{\Gamma}(\RR^{\mu})$.     % \tblue{(****** INUTILE **********)}
\end{enumerate}
\end{proposition}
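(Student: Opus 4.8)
The plan is to deduce all three inclusions from the pointwise-in-$(\bfy,\bfz)$ information of Proposition~\ref{proposition5}, via an energy identity obtained by testing the strong‑stochastic equation against the solution itself. Recall from Proposition~\ref{prop:well-posed} that the weak solution $u\in X$ agrees, for $\Gamma$-almost all $(\bfy,\bfz)\in\RR^{\mu}$, with the strong‑stochastic solution $u(\cdot,\bfy,\bfz)\in H^{1}_{0}(D)$ of \eqref{eq:strongstochasticform}. Taking $v=u(\cdot,\bfy,\bfz)$ in \eqref{eq:strongstochasticform} and using the coercivity of $[\Cdroit]$ (Lemma~\ref{lemma7}) gives, for $\Gamma$-almost all $(\bfy,\bfz)$,
\begin{equation}
\alpha\,\Vert u(\cdot,\bfy,\bfz)\Vert_{H^{1}_{0}}^{2}\ \le\ \int_{D}\nabla u(\cdot,\bfy,\bfz)\cdot[\Cdroit](\cdot,\bfy,\bfz)\,\nabla u(\cdot,\bfy,\bfz)\,d\bfx\ =\ f\big(u(\cdot,\bfy,\bfz)\big)\ \le\ \frac{1}{\alpha}\,\Vert f\Vert_{H^{-1}}^{2}\, ,
\end{equation}
so that also $\Vert u(\cdot,\bfy,\bfz)\Vert_{H^{1}_{0}}\le\alpha^{-1}\Vert f\Vert_{H^{-1}}$, the upper bounds being constants independent of $(\bfy,\bfz)$. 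Note also $\Gamma(\RR^{\mu})=\Gamma_{\nu}(\RR^{\nu})<+\infty$, since $\Gamma_{N_{g}}$ is a probability measure and $\Gamma_{\nu}$ a finite measure.

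For (i), I would integrate the middle term of the energy identity over $\RR^{\mu}$ against $\Gamma$; it is nonnegative and measurable, so Tonelli's theorem gives $\Vert u\Vert_{X^{({\bfC})}}^{2}=E_{\Gamma}\big(\int_{D}\nabla u\cdot[\Cdroit]\nabla u\,d\bfx\big)\le\alpha^{-1}\Vert f\Vert_{H^{-1}}^{2}\,\Gamma(\RR^{\mu})<+\infty$, i.e.\ $u\in X^{({\bfC})}$. For (ii), I would multiply the energy identity by the nonnegative weight $\gamma(\bfy,\bfz)$ before integrating, obtaining $\Vert u\Vert_{X^{(\gamma\bfC)}}^{2}=E_{\Gamma}\big(\gamma\,f(u(\cdot,\bfy,\bfz))\big)\le\alpha^{-1}\Vert f\Vert_{H^{-1}}^{2}\,E_{\Gamma}(\gamma)$, which is finite when $\gamma\in L^{1}_{\Gamma}(\RR^{\mu})$; hence $u\in X^{(\gamma\bfC)}$. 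For (iii), I would not use the energy identity but the pointwise $H^{1}_{0}$ bound directly: $\gamma(\bfy,\bfz)^{2}\int_{D}\Vert\nabla u(\cdot,\bfy,\bfz)\Vert_{2}^{2}\,d\bfx=\gamma(\bfy,\bfz)^{2}\,\Vert u(\cdot,\bfy,\bfz)\Vert_{H^{1}_{0}}^{2}\le\alpha^{-2}\Vert f\Vert_{H^{-1}}^{2}\,\gamma(\bfy,\bfz)^{2}$, and integrating gives $\Vert u\Vert_{X^{(\gamma^{2})}}^{2}\le\alpha^{-2}\Vert f\Vert_{H^{-1}}^{2}\,E_{\Gamma}(\gamma^{2})<+\infty$ when $\gamma\in L^{2}_{\Gamma}(\RR^{\mu})$; hence $u\in X^{(\gamma^{2})}$.

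I would carry the steps out in the order (i), (ii), (iii); each reduces to an application of Tonelli's theorem to a nonnegative integrand, so no delicate analytic estimate is involved. The only point requiring a little care — and the main (mild) obstacle — is measurability: one must verify that $(\bfy,\bfz)\mapsto u(\cdot,\bfy,\bfz)\in H^{1}_{0}(D)$ is $\Gamma$-measurable, so that the maps $(\bfy,\bfz)\mapsto\int_{D}\nabla u\cdot[\Cdroit]\nabla u\,d\bfx$ and $(\bfy,\bfz)\mapsto\Vert u(\cdot,\bfy,\bfz)\Vert_{H^{1}_{0}}^{2}$ are measurable and the expectations $E_{\Gamma}(\cdot)$ above are meaningful. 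This follows from the identification $X=H^{1}_{0}(D)\otimes L^{2}_{\Gamma}(\RR^{\mu})\cong L^{2}_{\Gamma}(\RR^{\mu};H^{1}_{0}(D))$, the joint measurability of $[\Cdroit]$ in $(\bfx,\bfy,\bfz)$, and the uniqueness in Proposition~\ref{proposition5}, which together identify the Bochner-measurable representative of $u$ with the $\Gamma$-almost everywhere defined strong‑stochastic solution.
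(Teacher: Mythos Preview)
Your proof is correct and follows essentially the same route as the paper: test the strong-stochastic equation \eqref{eq:strongstochasticform} with $v=u$, combine with the bound \eqref{eq:continuity_almostsure} to get the pointwise estimate $\int_{D}\nabla u\cdot[\Cdroit]\nabla u\,d\bfx\le\alpha^{-1}\Vert f\Vert_{H^{-1}}^{2}$, then take expectations (weighted by $1$, $\gamma$, or replaced by $\gamma^{2}\Vert u\Vert_{H^{1}_{0}}^{2}$) for (i), (ii), (iii) respectively. Your additional remarks on measurability and on the finiteness of $\Gamma(\RR^{\mu})$ are more explicit than the paper's version but do not change the argument.
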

\begin{proof}
Using Eq.~\eqref{eq:strongstochasticform} with $v=u$ and Eq.~\eqref{eq:continuity_almostsure}, we obtain
$
\int_{D}\nabla u \cdot [\Cdroit] \nabla u \, d\bfx = f(u) \le \Vert f\Vert_{H^{-1}} \Vert u\Vert_{H^{1}_{0}}
 \le  \frac{1}{\alpha}\Vert f\Vert_{H^{-1}}^{2}.$ Taking the expectation yields $\Vert u\Vert_{X^{(\bfC)}}^{2}\le \frac{1}{\alpha}\Vert f\Vert_{H^{-1}}^{2}<\infty$, which proves (i).  If $\gamma\in L^{1}_{\Gamma}$,
$
\Vert u\Vert_{X^{(\gamma \bfC)}}^{2} = E_{\Gamma}(\gamma \int_{D}\nabla u \cdot [\Cdroit] \nabla u \, d\bfx) \le   \frac{1}{\alpha}\Vert f\Vert_{H^{-1}}^{2} E_{\Gamma}(\gamma)<\infty,$
which proves (ii). Finally,
if $\gamma\in L^{2}_{\Gamma}$, we have
$
\Vert u\Vert_{X^{(\gamma^{2})}}^{2} = E_{\Gamma}( \gamma^{2} \Vert u\Vert_{H^{1}_{0}}^{2} )  \le
\frac{1}{\alpha^{2}} \Vert f\Vert_{H^{-1}}^{2} E_{\Gamma}(\gamma^{2})<\infty,
$
which proves (iii).
\end{proof}

\subsection{Galerkin approximation}

Galerkin methods are introduced for the approximation of the
solution $u$ of problem \eqref{eq:prob_var}. Let $X_N \subset X$ be
an approximation space such that
$X_N:=X_{q,p}=V_{q}\otimes W_p$ with $V_q\subset H^{1}_{0}(D)$ (e.g. a finite
element approximation space) and $W_p\subset
L^2_\Gamma(\mathbb{R}^\mu)$ (e.g. a polynomial chaos approximation space). The Galerkin approximation $u_N\in X_N$ of $u$ is defined by
\begin{equation}
a(u_N,v_N) =  F(v_N)\quad \forall v_N\in X_N \, . \label{eq:prob_galerkin}
\end{equation}
Note that the coercivity property of bilinear form $a$ on
$X_N\times X_N$ (Lemma \ref{lemma9}(iii)) ensures the existence and uniqueness of a solution
$u_N$ to problem \eqref{eq:prob_galerkin}. The convergence of Galerkin
approximations is now analyzed in different situations corresponding to the different types
of representation of random fields (exponential or square type), to different underlying measures $\Gamma$, and to different choices of approximation spaces. We first analyze the case where  $X_{N}\subset X^{(\gamma)}$ which
leads us to a natural strategy for the definition of a convergent sequence of approximations. Then, we analyze the use of more general approximation spaces
that do not necessarily verify $X_{N}\subset X^{(\gamma)}$.

\subsubsection{Case $X_{N}\subset X^{(\gamma)}$}

\begin{proposition}\label{proposition8}
Assuming $X_{N}\subset X^{(\gamma)}\subset X^{(\bfC)}$, the solution $u_N\in X_N$ of
\eqref{eq:prob_galerkin} verifies
\begin{align}
&\Vert u-u_N\Vert_{X^{(\bfC)}}
\le \inf_{v_N\in X_N}\Vert u-v_N
\Vert_{X^{(\bfC)}} ,\label{eq:galerkin-XCnorm}
\\\text{and}\quad
&\Vert u-u_N\Vert_{X} \le \frac{1}{\sqrt{\alpha}}
 \inf_{v_N\in X_N}\Vert u-v_N
\Vert_{X^{(\bfC)}} .\label{eq:galerkin-Xnorm}
\end{align}
Therefore, if $\{X_{N}\}_{N\in\NN}\subset X^{(\gamma)}$ is a sequence of
approximation spaces such that
$\cup_{N\in\NN} X_{N}$ is dense in $X$, then there exists a subsequence of Galerkin approximations
$\{u_{N}\}_{N\in\NN}$ which converges to $u$ in the $X^{(\bfC)}$-norm and in the  $X$-norm.
\end{proposition}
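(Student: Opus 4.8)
The plan is to derive the two quasi-optimality estimates from an energy-norm C\'ea argument, and then to obtain convergence of a subsequence by a weak-compactness argument in the energy space $X^{(\bfC)}$.

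First, testing the weak problem \eqref{eq:prob_var} with $v=v_N\in X_N\subset X$ and subtracting \eqref{eq:prob_galerkin} yields the Galerkin orthogonality $a(u-u_N,v_N)=0$ for every $v_N\in X_N$. Both $u$ (by Proposition~\ref{proposition7}(i)) and $u_N$ (since $X_N\subset X^{(\gamma)}\subset X^{(\bfC)}$) belong to $X^{(\bfC)}$, on which $a$ is symmetric, coercive and continuous (Lemma~\ref{lemma9}) and, by the very definition of the norm, satisfies $a(v,v)=\Vert v\Vert_{X^{(\bfC)}}^2$, i.e.\ $a$ is the inner product of $X^{(\bfC)}$. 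Hence, for any $v_N\in X_N$, decomposing $u-u_N=(u-v_N)+(v_N-u_N)$ and using orthogonality on the second term,
\[
\Vert u-u_N\Vert_{X^{(\bfC)}}^2=a(u-u_N,u-u_N)=a(u-u_N,u-v_N)\le\Vert u-u_N\Vert_{X^{(\bfC)}}\,\Vert u-v_N\Vert_{X^{(\bfC)}},
\]
which gives \eqref{eq:galerkin-XCnorm} after dividing and taking the infimum over $v_N\in X_N$; combining with $\alpha^{1/2}\Vert\cdot\Vert_X\le\Vert\cdot\Vert_{X^{(\bfC)}}$ from Lemma~\ref{lemma8} then gives \eqref{eq:galerkin-Xnorm}.

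For the convergence statement, testing \eqref{eq:prob_galerkin} with $v_N=u_N$ gives $\Vert u_N\Vert_{X^{(\bfC)}}^2=F(u_N)\le\Vert F\Vert_{X'}\Vert u_N\Vert_X\le\alpha^{-1/2}\Vert F\Vert_{X'}\Vert u_N\Vert_{X^{(\bfC)}}$, so $\{u_N\}$ is bounded in the Hilbert space $X^{(\bfC)}$; hence some subsequence $\{u_{N_k}\}$ converges weakly in $X^{(\bfC)}$ to a limit $w$, and therefore also weakly in $X$ since $X^{(\bfC)}\hookrightarrow X$ continuously. I would then identify $w$ with $u$: for fixed $v\in\bigcup_N X_N$ and $k$ large (so that $v\in X_{N_k}$) one has $a(u_{N_k},v)=F(v)$, and since $v\in X^{(\bfC)}$ the functional $a(\cdot,v)$ is continuous on $X^{(\bfC)}$ (Lemma~\ref{lemma9}(ii)), so in the limit $a(w,v)=F(v)$ for all $v\in\bigcup_N X_N$, a set dense in $X$; with the uniqueness part of Proposition~\ref{prop:well-posed} this forces $w=u$. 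Finally $\Vert u_{N_k}\Vert_{X^{(\bfC)}}^2=F(u_{N_k})\to F(u)=\Vert u\Vert_{X^{(\bfC)}}^2$ (using $F\in X'$ and $u_{N_k}\rightharpoonup u$ in $X$), and weak convergence together with convergence of the norms in a Hilbert space implies strong convergence, so $u_{N_k}\to u$ in $X^{(\bfC)}$, and then in $X$ by Lemma~\ref{lemma8}.

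The step I expect to be the main obstacle is the identification $w=u$. The hypothesis provides density of $\bigcup_N X_N$ only in the weak norm of $X$, whereas the energy norm of $X^{(\bfC)}$ is strictly stronger; consequently one cannot pass directly from ``$a(w,\cdot)=F$ on a dense subset of $X$'' to ``$w$ solves \eqref{eq:prob_var}'' by continuity of $a(w,\cdot)$ on $X$, which fails in general, and this is precisely why one resorts to weak compactness and a subsequence rather than reading convergence off \eqref{eq:galerkin-XCnorm}. Making the identification rigorous requires using that $\bigcup_N X_N\subset X^{(\bfC)}$ and that both $u$ and $w$ solve the Lax--Milgram problem posed on $X^{(\bfC)}$ itself (coercive there with constant one), which reduces the issue to density of $\bigcup_N X_N$ in $X^{(\bfC)}$; for the concrete tensorized spaces $V_q\otimes W_p$ this can in turn be recovered by a truncation argument in the stochastic variables, exploiting that $\gamma$ is dominated by a polynomial in $\bfXi$ uniformly in $\bfz$. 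The remaining ingredients --- the C\'ea identity, the uniform bound, and the weak-plus-norm-convergence upgrade --- are routine.
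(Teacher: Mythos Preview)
Your derivation of the two C\'ea-type estimates \eqref{eq:galerkin-XCnorm} and \eqref{eq:galerkin-Xnorm} is correct and coincides verbatim with the paper's proof: Galerkin orthogonality, the identity $a(v,v)=\Vert v\Vert_{X^{(\bfC)}}^2$, Cauchy--Schwarz in $X^{(\bfC)}$, and then the coercivity estimate from Lemma~\ref{lemma8}. The paper's proof in fact stops there; it does not spell out the convergence statement at all and treats it as an immediate corollary of the C\'ea bound.

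Where you depart from the paper is in the convergence argument, and here the weak-compactness route is more machinery than the situation warrants, without buying you anything. Since $a$ is the inner product of $X^{(\bfC)}$ and Galerkin orthogonality holds, $u_N$ is precisely the $X^{(\bfC)}$-orthogonal projection of $u$ onto $X_N$; hence \eqref{eq:galerkin-XCnorm} is an \emph{equality}, $\Vert u-u_N\Vert_{X^{(\bfC)}}=d_{X^{(\bfC)}}(u,X_N)$, and convergence of the full sequence (for nested $X_N$) is equivalent to $u\in\overline{\cup_N X_N}^{\,X^{(\bfC)}}$. Your weak-compactness argument leads to exactly the same requirement: the weak limit $w$ of $u_{N_k}$ lies in the closed subspace $Y:=\overline{\cup_N X_N}^{\,X^{(\bfC)}}$, the identity $a(w-u,v)=0$ for $v\in\cup_N X_N$ only gives $u-w\perp_{X^{(\bfC)}} Y$, and hence $w=P_Y u$, which equals $u$ if and only if $u\in Y$. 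So the detour through boundedness, weak limits, and norm convergence does not circumvent the density obstacle you yourself flag; if you had $u\in Y$ (or density of $\cup_N X_N$ in $X^{(\bfC)}$), the one-line projection argument would already conclude.

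You are right that the hypothesis ``$\cup_N X_N$ dense in $X$'' is strictly weaker than what either argument needs, and that this is a genuine subtlety which the paper does not address. Your proposed remedy---recovering $X^{(\bfC)}$-density for the concrete tensorized spaces via a truncation in the stochastic variables---is essentially the content of the subsequent analysis in the paper (Proposition~\ref{proposition9} and Proposition~\ref{proposition10}), where additional structure on $\gamma$ and $W_p$ is imposed. As a self-contained proof of the convergence clause in Proposition~\ref{proposition8} under the stated hypotheses alone, however, both your argument and the paper's remain incomplete.
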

\begin{proof}
Using the Galerkin
orthogonality property of $u_N$ and the continuity of $a$ (Lemma \ref{lemma9}(ii)) yield
\begin{eqnarray*}
 \Vert u-u_{N}\Vert_{X^{(\bfC)}}^{2}
&= a(u-u_{N},u-u_{N})
= a(u-u_N,u-v_N) %\\&
\le \Vert u-u_N\Vert_{X^{(\bfC)}}\Vert
u-v_N\Vert_{X^{(\bfC)}},\end{eqnarray*}
for all $v_N\in X_N$. Inequality \eqref{eq:galerkin-XCnorm} is obtained
by taking the infimum over all $v_N\in X_N$. Then, inequality
 \eqref{eq:galerkin-Xnorm} is obtained by using the coercivity of $a$ (Lemma \ref{lemma9}(iii)).
\end{proof}

Let us now analyze the condition $X_N\subset X^{(\gamma)}$ of Proposition \ref{proposition8}, with $X_{N}=V_{q}\otimes W_{p}$. Due to
the tensor product structure of $X_{N}=V_{q}\otimes W_{p}$, the  condition $X_{N}\subset X^{(\gamma)}$ is equivalent to
$ \sqrt{\gamma}\, W_p \subset L^2_{\Gamma}(\mathbb{R}^\mu)$, that means
$
{E}_{\Gamma}(\varphi^2\gamma)<+\infty  \text{ for all }
\varphi\in W_p.
$

\paragraph{Use of weighted approximation spaces (for both types of representation of random fields)}
A possible approximation strategy consists in choosing weighted approximation spaces $W_{p} = \{\gamma^{-1/2} \varphi ;\varphi\in \widetilde W_{p}\}$ with $\widetilde W_{p}\subset L^{2}_{\Gamma}(\RR^{\nu})$. For example, if the measure $\Gamma$ has finite moments of any order, $\widetilde W_{p}$ can be chosen as a classical (piecewise) polynomial space with degree $p$, e.g.
 $$
 {\widetilde W}_p=  \mathbb{P}_p(\mathbb{R}^{N_{g}}) \otimes \mathbb{P}_{p}(\RR^{\nu})=
\mathrm{span}\{\bfy^\bfalpha\bfz^{\bfbeta};\bfalpha\in\NN^{N_{g}},\bfbeta\in\NN^{\nu} ,  \vert \bfalpha\vert \le
p,\vert \bfbeta\vert\le p\}.
$$
Indeed, for all $\varphi = \gamma^{-1/2} \bfy^{\bfalpha}\bfz^{\bfbeta}\in W_{p}$,$$
{E}_{\Gamma}(\varphi^{2}\gamma) = {E}_{\Gamma}(\bfy^{2\bfalpha} \bfz^{2\bfbeta}) = \int_{\mathbb{R}^\mu}
\bfy^{2\bfalpha} \bfz^{2\bfbeta} \Gamma(d\bfy,d\bfz) <+\infty,
$$
This approximation strategy is adapted to both types of representation of random fields (exponential or square type) since it does not require any assumption on $\gamma$.
However, the use of weighted  polynomial approximation spaces $W_{p}$ leads to non classical computational treatments.

\paragraph{Use of classical approximation spaces (for square type representation of random fields)} More classical approximation spaces can be used provided some conditions on $\gamma$.
\begin{lemma}\label{lemma10}
If $\gamma \in L^r_\Gamma(\mathbb{R}^\mu)$ and $W_p\subset
L^s_\Gamma(\mathbb{R}^\mu)$ with some $r\ge 1$ and $s\ge 2$ such that
$\frac{2}{s}+\frac{1}{r}=1$, then $X_N \subset X^{(\gamma)}$. In particular $X_{N}\subset X^{(\gamma)}$ if $\gamma \in L^{1}_{\Gamma}(\RR^{\mu})$ and
 $W_{p}\subset L^{\infty}_{\Gamma}(\RR^{\mu})$.
\end{lemma}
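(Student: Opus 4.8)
The plan is to reduce to the pointwise criterion recorded in the paragraph just above the lemma and then apply a single H\"older estimate. Recall that, because $X_{N}=V_{q}\otimes W_{p}$ with $V_{q}$ finite-dimensional, the inclusion $X_{N}\subset X^{(\gamma)}$ is equivalent to $E_{\Gamma}(\varphi^{2}\gamma)<+\infty$ for every $\varphi\in W_{p}$: for a general $v=\sum_{k}\phi_{k}\varphi_{k}\in X_{N}$ one has $\int_{D}\Vert\nabla v\Vert_{2}^{2}\,d\bfx=\sum_{k,\ell}\big(\int_{D}\nabla\phi_{k}\cdot\nabla\phi_{\ell}\,d\bfx\big)\,\varphi_{k}\varphi_{\ell}$, a finite linear combination of the products $\varphi_{k}\varphi_{\ell}$, each of which lies in $L^{s/2}_{\Gamma}(\RR^{\mu})$ by Cauchy--Schwarz. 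So I would fix $\varphi\in W_{p}$ and estimate $E_{\Gamma}(\varphi^{2}\gamma)$.

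The key point is that the hypothesis $\tfrac{2}{s}+\tfrac{1}{r}=1$ says exactly that $r$ and $s/2$ are conjugate exponents, and $s\ge 2$ makes $s/2\ge 1$ a legitimate exponent, so $\Vert\varphi^{2}\Vert_{L^{s/2}_{\Gamma}(\RR^{\mu})}=\Vert\varphi\Vert_{L^{s}_{\Gamma}(\RR^{\mu})}^{2}$. I would therefore apply H\"older's inequality with these exponents to obtain $E_{\Gamma}(\varphi^{2}\gamma)\le\Vert\gamma\Vert_{L^{r}_{\Gamma}(\RR^{\mu})}\,\Vert\varphi\Vert_{L^{s}_{\Gamma}(\RR^{\mu})}^{2}$, which is finite since $\gamma\in L^{r}_{\Gamma}(\RR^{\mu})$ and $\varphi\in W_{p}\subset L^{s}_{\Gamma}(\RR^{\mu})$ by assumption. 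Together with the equivalence above, this gives $X_{N}\subset X^{(\gamma)}$.

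For the ``in particular'' assertion I would take $r=1$ and $s=\infty$ (still compatible with $\tfrac{2}{s}+\tfrac{1}{r}=1$ under the usual conventions), or argue directly: $\varphi\in W_{p}\subset L^{\infty}_{\Gamma}(\RR^{\mu})$ gives $\varphi^{2}\in L^{\infty}_{\Gamma}(\RR^{\mu})$, whence $E_{\Gamma}(\varphi^{2}\gamma)\le\Vert\varphi\Vert_{L^{\infty}_{\Gamma}(\RR^{\mu})}^{2}\,E_{\Gamma}(\gamma)<+\infty$ because $\gamma\in L^{1}_{\Gamma}(\RR^{\mu})$.

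There is essentially no obstacle in this proof: it is a one-line application of H\"older's inequality. The only thing requiring a moment's care is the passage from the pointwise condition on $W_{p}$ back to all of $X_{N}$, which is handled by the finite-combination identity for $\int_{D}\Vert\nabla v\Vert_{2}^{2}\,d\bfx$; since the discussion preceding the lemma already records this equivalence, I would simply invoke it rather than redo it.
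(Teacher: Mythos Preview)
Your proof is correct and follows essentially the same approach as the paper: both reduce to the condition $E_{\Gamma}(\varphi^{2}\gamma)<+\infty$ for $\varphi\in W_{p}$ and then apply H\"older's inequality with conjugate exponents $r$ and $s'=s/2$, treating the endpoint case $r=1$, $s=\infty$ separately. The paper's write-up is simply more terse.
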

\begin{proof}
For $r>1$, letting $s'=\frac{s}{2}$ such that $\frac{1}{s'}+\frac{1}{r}=1$, we have
$$ {E}_{\Gamma}(\varphi^2\gamma) \le
\{{E}_{\Gamma}(\varphi^{2s'})\}^{1/s'}\, \{{E}_{\Gamma}(\gamma^{r})\}^{1/r} =
\{{E}_{\Gamma}(\varphi^{s})\}^{2/s}\, \{{E}_{\Gamma}(\gamma^{r})\}^{1/r}<\infty.
$$
The proof for $r=1$ and $s=\infty$ is straightforward.
\end{proof}

Lemma \ref{lemma10}
 allows us to analyze the approximation when using the square type representation of random fields, for which
$\gamma\in L^{1}_{\Gamma}(\RR^{\mu})$. Indeed, in this case, Lemma \ref{lemma10}
 implies that $X_{N}\subset X^{(\gamma)}$ if
 $W_{p}\subset L^{\infty}_{\Gamma}(\RR^{\mu})$. In particular, this condition is satisfied if $\Gamma$ has a bounded support and $W_{p}$ is a (possibly piecewise) polynomial space, e.g. $W_{p}=\PP_{p}(\RR^{N_{g}})\otimes \PP_{p}(\RR^{\nu})$.
We note that $\Gamma = \Gamma_{N_{g}}\otimes \Gamma_{\nu}$ has a bounded support if (i) the random germ
$\bfXi$ for the representation of random fields in the class $\curK=\{\curK^{(m,N)}(\cdot,\bfXi,\curM_{[a]}(\bfz)) \, ; \, \bfz\in \RR^{\nu} \}$ has a bounded support $supp(\Gamma_{N_{g}})$, and (ii) the support of the measure $\Gamma_{\nu}$ on the parameter space  $\RR^{\nu}$ is bounded. This latter condition implies that the map $u$ only provides the solution to the boundary value problems associated to a subset of random fields in $\curK$ (those corresponding to parameters $\bfz\in supp(\Gamma_{\nu}))$. In other words, this first result shows that  when using the square type representation of random fields with a germ $\bfXi$ with bounded support, the approximation is possible using classical polynomial approximation spaces.  %However,  the use of classical polynomial approximation spaces does not allow us to consider a germ $\bfXi$ with %unbounded support (e.g. a Gaussian germ) and to explore the whole class of random fields $\curK$ with the single% map $u$.
\\

Note that for having $X_{N}\subset X^{(\gamma)}$, weaker conditions on approximation spaces  could be obtained by looking further at the properties of $\gamma$ and the measure $\Gamma$.
In particular, if $W_p$ is the tensor product
of polynomial spaces with (total or partial) degree $p$, that means
$W_p=\mathbb{P}_p(\mathbb{R}^{N_{g}})\otimes \PP_{p}(\RR^{\nu})$, then $X_N\subset X^{(\gamma)}$ if for all $\bfy^{\bfalpha}\bfz^{\bfbeta}\in W_{p}$,
\begin{equation}
{E}_{\Gamma}(\bfy^{2\bfalpha} \bfz^{2\bfbeta}\gamma(\bfy,\bfz)) = \int_{\mathbb{R}^\mu}
\bfy^{2\bfalpha} \bfz^{2\bfbeta}\gamma(\bfy,\bfz) \Gamma(d\bfy,d\bfz) <+\infty,\label{eq:conditionWpgamma}
\end{equation}
which is a condition on $\gamma$ and on measure $\Gamma$. The following result justifies the applicability of classical (piecewise) polynomial spaces when using the  square type representation of random fields.

\begin{proposition}\label{proposition9}
For the square type representation of random fields, if $W_{p}$ is the tensor product
of (possibly piecewise) polynomial spaces and if $\Gamma=\Gamma_{N_{g}}\otimes \Gamma_{\nu}$ is such that probability measures
$\Gamma_{N_{g}}$ and $\Gamma_{\nu}$ admit moments of any order, then $X_{N} =V_{q}\otimes W_{p} \subset X^{(\gamma)}$.
\end{proposition}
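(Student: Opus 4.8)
The plan is to verify directly the characterization established in the paragraph preceding the statement: because $X_N=V_q\otimes W_p$ has tensor-product structure, $X_N\subset X^{(\gamma)}$ holds as soon as $E_\Gamma(\varphi^2\gamma)<+\infty$ for every $\varphi\in W_p$, which for $W_p$ spanned by monomials $\bfy^\bfalpha\bfz^\bfbeta$ is exactly condition \eqref{eq:conditionWpgamma}. Thus everything reduces to controlling the growth of the function $\gamma(\bfy,\bfz)$ attached to the square type representation.

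The key step is to produce a bound on $\gamma$ that is \emph{uniform in} $\bfz$. I would start from the explicit expression of $\gamma$ exhibited in the proof of Proposition~\ref{proposition4}, namely $\gamma(\bfy,\bfz)=\underline k_{\,1}(\sqrt n\,\varepsilon+\gamma_0+\gamma_1\,\delta(\bfy,\bfz)^2)/(1+\varepsilon)$ with $\delta(\bfy,\bfz)=\Vert G_0\Vert_\infty+\sum_{i=1}^m\sqrt{\sigma_i}\,\Vert G_i\Vert_\infty\,\vert\eta_i(\bfy,\bfz)\vert$ and $\bfeta(\bfy,\bfz)=\curM_{[a]}(\bfz)^T\bfPsi(\bfy)$ (cf. \eqref{eq:delta}, \eqref{boundKmN}). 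Since by construction $\curM_{[a]}(\bfz)\in\VV_m(\RR^N)$, its columns are unit vectors and every entry has modulus at most $1$; hence $\vert\eta_i(\bfy,\bfz)\vert\le\sum_{j=1}^N\vert\Psi_j(\bfy)\vert$ for \emph{all} $\bfz$. Consequently $\delta(\bfy,\bfz)\le\widetilde\delta(\bfy):=\Vert G_0\Vert_\infty+\big(\sum_{i=1}^m\sqrt{\sigma_i}\Vert G_i\Vert_\infty\big)\sum_{j=1}^N\vert\Psi_j(\bfy)\vert$, so $\gamma(\bfy,\bfz)\le\widetilde\gamma(\bfy):=\underline k_{\,1}(\sqrt n\,\varepsilon+\gamma_0+\gamma_1\,\widetilde\delta(\bfy)^2)/(1+\varepsilon)$, a function of $\bfy$ alone which is dominated by a polynomial in $\bfy$ (the $\Psi_j$ being multivariate Hermite polynomials, only finitely many of them, of bounded degree).

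With $\gamma(\bfy,\bfz)\le\widetilde\gamma(\bfy)$ in hand the rest is routine. For $\varphi\in W_p$ one bounds $\varphi^2$ by a polynomial in $(\bfy,\bfz)$: immediate when $W_p$ consists of genuine polynomials of degree $\le p$, and still valid when $W_p$ consists of piecewise polynomials of degree $\le p$ over a finite partition of $\RR^\mu$, since on each piece $\vert\varphi\vert$ is a polynomial of degree $\le p$, whence globally $\vert\varphi(\bfy,\bfz)\vert\le C(1+\Vert(\bfy,\bfz)\Vert)^p$, again dominated by a polynomial. Then $E_\Gamma(\varphi^2\gamma)\le E_\Gamma\big(P(\bfy)Q(\bfz)\big)=E_{\Gamma_{N_g}}(P(\bfy))\,E_{\Gamma_\nu}(Q(\bfz))$ for suitable polynomials $P,Q$ with nonnegative coefficients, and both factors are finite because $\Gamma_{N_g}$ and $\Gamma_\nu$ admit moments of every order; hence $X_N\subset X^{(\gamma)}$. (Equivalently, once $\gamma\le\widetilde\gamma(\bfy)$ is known one has $\gamma\in L^r_\Gamma(\RR^\mu)$ for every $r$ and $W_p\subset L^s_\Gamma(\RR^\mu)$ for every $s$, so Lemma~\ref{lemma10} applies directly.) I expect the only genuinely delicate point to be the $\bfz$-uniform majorization of $\gamma$ via the compactness of the Stiefel manifold; it should also be stressed in the write-up that this is specific to the square type representation, where $\gamma$ grows only quadratically in $\delta$, whereas the exponential type representation carries a factor $e^\delta$ for which the moment hypotheses on $\Gamma$ no longer suffice, explaining the need for weighted spaces in that case.
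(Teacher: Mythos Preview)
Your proposal is correct and follows essentially the same route as the paper: both arguments hinge on the observation that $[y]=\curM_{[a]}(\bfz)\in\VV_m(\RR^N)$ forces $\vert y_i^j\vert\le 1$, which yields a bound on $\gamma(\bfy,\bfz)$ by a polynomial in $\bfy$ alone (uniformly in $\bfz$), after which integrability against $\varphi^2$ follows from the moment hypotheses on $\Gamma$. The only cosmetic difference is that the paper bounds $\delta^2$ directly via $\eta_i^2\le N\sum_j\Psi_j(\bfy)^2$ (Cauchy--Schwarz), whereas you bound $\delta$ via $\vert\eta_i\vert\le\sum_j\vert\Psi_j(\bfy)\vert$; both lead to the same polynomial control, and your closing remark on why the argument fails for the exponential type representation is a useful addition.
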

\begin{proof}
Following the proof of Proposition  \ref{proposition4}, we obtain
$\gamma \le \underline{k}_{1}(\sqrt{n} \varepsilon +\gamma_{0} +\gamma_{1}\delta^{2})$, with $\delta$ defined by Eq.~\eqref{eq:delta} and such that
\begin{eqnarray*}
\delta^{2} \le  2\Vert G_{0}\Vert_{\infty}^{2} + 2\Big(\sum_{i=1}^{m }\sqrt{\sigma_{i}}\Vert G_{i}\Vert_{\infty}\vert  \eta_{i}\vert\Big)^{2}
%\le 2\Vert G_{0}\Vert_{\infty}^{2} + 2m \sum_{i=1}^{m } \sigma_{i}\Vert G_{i}\Vert_{\infty}^{2} \eta_{i}^{2}\\
%&
\le  g_{0}+ g_{1} \sum_{i=1}^{m} \eta_{i}^{2} ,
\end{eqnarray*}
with $g_{0}=2\Vert G_{0}\Vert_{\infty}^{2} $ and $g_{1}=  2 \sum_{i=1}^{m}\sigma_{i}\Vert G_{i}\Vert_{\infty}^{2} $.  From Eq.~\eqref{Eq38bis}, we have $\bfeta = [y]^{T}\bfPsi(\bfy)$ where $\bfPsi(\bfy)$ is a vector of polynomials in $\bfy$ and $[y]^{T}[y]=[I_{m}]$. The latter condition implies $\vert y_{i}^{j}\vert \le 1$ for all $1\le i \le m $ and $1\le j \le N $. Therefore, $\eta_{i}^{2} \le (\sum_{j=1}^{N} y^{j}_{i} \Psi_{j}(\bfy))^{2} \le N \sum_{j=1}^{N}\Psi_{j}(\bfy)^{2} $, and
$
\delta(\bfy,\bfz)^{2} \le g_{0}+ g_{1} m N \sum_{j=1}^{N}\Psi_{j}(\bfy)^{2} := Q(\bfy),$ where $Q(\bfy)$ is a polynomial in $\bfy$. If the measures $\Gamma_{N_{g}}$ and $\Gamma_{\nu}$ have finite moments of any order, then any (piecewise) polynomial function on $\RR^{\mu}$ is in $L^{1}_{\Gamma}(\RR^{\mu})$. Therefore,
if  $W_p$ is a space of (piecewise) polynomial functions on $ \RR^{\mu}$ , then for all $\varphi \in W_{p}$, we have
$$
{E}_{\Gamma}(\varphi(\bfy,\bfz)^{2}\gamma(\bfy,\bfz)) \lesssim {E}_{\Gamma}(\varphi(\bfy,\bfz)^{2}(1+Q(\bfy)) < +\infty,
$$
from which we deduce that $V_{q}\otimes W_{p} \subset X^{(\gamma)}$.
%\textit{For the square type representation, we have that $\gamma(\bfy,\bfz)$ can be bounded by a function which is polynomial in $\bfz$ and $[y]=\curM_{[a]}(\bfz)$ (see Proposition \ref{proposition4}), where $\curM_{[a]}(\bfz)$ corresponds to the exponential parametrization the non-compact Stiefel manifold. For the particular case where $\Gamma$ is Gaussian, it can be proven that the condition is satisfied.}
\end{proof}

Since measures with bounded support have finite moments of any order, Proposition \ref{proposition9} is consistent with the first conclusions of Lemma \ref{lemma10}. Moreover, we have that for the square type representation of random fields, if $\bfZ$ is chosen as a Gaussian random variable (that means $\Gamma_{\nu}$ is a Gaussian measure) or a random variable with bounded support, then the classical Hermite polynomial chaos space associated with a Gaussian germ $\bfXi$ can be used. Note that the use of a measure $\Gamma_{\nu}$ whose support is $\RR^{\nu}$ allows to explore the whole class of random fields $\curK$ with the single map $u$.

\begin{rem} The case of log-normal random fields corresponds to a particular case of the exponential type representation for which the random field $[\bfG]$ is Gaussian and represented using a degree one Hermite polynomial chaos expansion with a Gaussian germ $\bfXi$. In this case, it can be proven that  if $W_{p}$ is the tensor product
of (possibly piecewise) polynomial spaces and if $\Gamma_{\nu}$ admit moments of any order, then $X_{N} =V_{q}\otimes W_{p} \subset X^{(\gamma)}$. This justifies the use of polynomial approximation spaces when considering the particular case of log-normal random fields. However, this result does not extend to other random fields with exponential type representation. 
\end{rem}

\subsubsection{Case of general approximation spaces $X_{N}$}

In order to handle both types of representation of random fields (exponential type and square type) with a measure $\Gamma$ with a possibly unbounded support, we here propose and analyze an approximation strategy which consists in using truncated approximation spaces. We consider a family
of approximation spaces $\{V_q\}_{q\ge 1}$ such that $\cup_q V_q$ is
dense in $H^{1}_{0}(D)$, and a family $\{W_p\}_{p\ge 1}$ such that $\cup_p
W_p$ is dense in $L^2_{\Gamma}(\mathbb{R}^\mu)$. Classical (possibly piecewise) polynomial spaces with degree $p$ can be chosen for $W_{p}$. Let $X_{N}$ be defined as $V_{q}\otimes W_{p}$.
Then, for $\tau>0$, we introduce the approximation
space
$$W_p^\tau = \left\{ I_{\gamma\le \tau} \varphi;\varphi\in W_p
\right\}\, ,$$
where $I_{\gamma\le \tau}(\bfy,\bfz) = 1$ if $\gamma(\bfy,\bfz)\le \tau$ and $0$ if $\gamma(\bfy,\bfz)> \tau$.
It can be proven that $\cup_{\tau>0} \cup_{p\ge 1} W_{p}^{\tau}$
is dense in $L^2_{\Gamma}(\mathbb{R}^\mu)$. Let $X_{N}^\tau$ be defined as $V_q\otimes W_p^\tau$.
For $\psi = I_{\gamma\le
\tau}\varphi$ in $W_p^\tau$, we have
$
{E}_{\Gamma}({\gamma\psi^2}) \le \tau
{E}_{\Gamma}({\varphi^2})<\infty,
$
that means $X_{N}^\tau :=V_q\otimes W_p^\tau \subset
X^{(\gamma)}$ for all $\tau>0$.  The
Galerkin approximation of $u$ in $X_{N}^\tau$ is denoted by $u_{N}^\tau$.
\begin{proposition}\label{proposition10}
The Galerkin approximation $u_{N}^{\tau} \in X_{N}^{\tau}$ of $u$ satisfies
\begin{equation}
\Vert u-u_{N}^{\tau}\Vert_{X^{(\bfC)}}^{2} \le \tau \inf_{v\in X_{N}}\Vert u-v
\Vert_{X}^2 + \Vert u I_{\gamma>\tau}
\Vert_{X^{(\bfC)}}^2,
\end{equation}
and there exists a sequence of approximation spaces $X_{N(\tau)}^{\tau} = V_{q(\tau)}\otimes W_{p(\tau)}^{\tau}$ such that
\begin{equation}
\Vert u-u_{N(\tau)}^\tau\Vert_{X^{(\bfC)}} \to 0\quad \text{as}
\quad \tau\to\infty.\label{proposition10-convergence}
\end{equation}
\end{proposition}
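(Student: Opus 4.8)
The plan is to mimic the best-approximation (quasi-optimality) argument of Proposition~\ref{proposition8}, but with the error split into a bulk contribution controlled by the best approximation in $X_{N}$ and a tail contribution supported where $\gamma>\tau$. Two structural facts make this work: first, $X_{N}^{\tau}=V_{q}\otimes W_{p}^{\tau}\subset X^{(\gamma)}\subset X^{(\bfC)}$ by Lemma~\ref{lemma8}, so the Galerkin orthogonality $a(u-u_{N}^{\tau},w)=0$ holds for every $w\in X_{N}^{\tau}$; second, $a$ is continuous on $X^{(\bfC)}\times X^{(\bfC)}$ by Lemma~\ref{lemma9}(ii), and both $u$ (Proposition~\ref{proposition7}(i)) and $u_{N}^{\tau}$ lie in $X^{(\bfC)}$.

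First I would fix $v\in X_{N}=V_{q}\otimes W_{p}$ and set $w=I_{\gamma\le\tau}\,v$; writing $v$ as a finite sum of elementary tensors $\phi\otimes\chi$ with $\phi\in V_{q}$ and $\chi\in W_{p}$, one has $I_{\gamma\le\tau}(\phi\otimes\chi)=\phi\otimes(I_{\gamma\le\tau}\chi)$ with $I_{\gamma\le\tau}\chi\in W_{p}^{\tau}$, so $w\in X_{N}^{\tau}$. Since $w-u_{N}^{\tau}\in X_{N}^{\tau}$, Galerkin orthogonality gives $\Vert u-u_{N}^{\tau}\Vert_{X^{(\bfC)}}^{2}=a(u-u_{N}^{\tau},u-u_{N}^{\tau})=a(u-u_{N}^{\tau},u-w)$, and Lemma~\ref{lemma9}(ii) then yields $\Vert u-u_{N}^{\tau}\Vert_{X^{(\bfC)}}\le\Vert u-w\Vert_{X^{(\bfC)}}$. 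Next I would decompose $u-w=u\,I_{\gamma>\tau}+(u-v)\,I_{\gamma\le\tau}$. The indicator factors $I_{\gamma>\tau}$ and $I_{\gamma\le\tau}$ have disjoint supports in $(\bfy,\bfz)$, so for $\Gamma$-almost every $(\bfy,\bfz)$ exactly one of the two summands vanishes identically in $\bfx$; hence the cross term in the $X^{(\bfC)}$-inner product is zero and $\Vert u-w\Vert_{X^{(\bfC)}}^{2}=\Vert u\,I_{\gamma>\tau}\Vert_{X^{(\bfC)}}^{2}+\Vert(u-v)\,I_{\gamma\le\tau}\Vert_{X^{(\bfC)}}^{2}$. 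On $\{\gamma\le\tau\}$ the upper bound $<[\Cdroit]\bfh,\bfh>_{2}\le\gamma\Vert\bfh\Vert_{2}^{2}\le\tau\Vert\bfh\Vert_{2}^{2}$ from Lemma~\ref{lemma7} gives $\Vert(u-v)\,I_{\gamma\le\tau}\Vert_{X^{(\bfC)}}^{2}\le\tau\,E_{\Gamma}\big(\int_{D}\Vert\nabla(u-v)\Vert_{2}^{2}\,d\bfx\big)=\tau\Vert u-v\Vert_{X}^{2}$. Taking the infimum over $v\in X_{N}$ produces the claimed inequality.

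For the convergence statement I would treat the two terms on the right-hand side separately. The tail term $\Vert u\,I_{\gamma>\tau}\Vert_{X^{(\bfC)}}^{2}=E_{\Gamma}\big(I_{\gamma>\tau}\int_{D}\nabla u\cdot[\Cdroit]\nabla u\,d\bfx\big)$ does not depend on $X_{N}$ and tends to $0$ as $\tau\to\infty$ by dominated convergence: $u\in X^{(\bfC)}$ makes $\int_{D}\nabla u\cdot[\Cdroit]\nabla u\,d\bfx$ a $\Gamma$-integrable dominating function, and $I_{\gamma>\tau}\to0$ pointwise $\Gamma$-almost everywhere since $\gamma$ is finite $\Gamma$-a.e. (Lemma~\ref{lemma7}). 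For the bulk term, $\cup_{q,p}V_{q}\otimes W_{p}$ is dense in $X=H^{1}_{0}(D)\otimes L^{2}_{\Gamma}(\RR^{\mu})$ (a standard consequence of the assumed density of $\cup_{q}V_{q}$ in $H^{1}_{0}(D)$ and of $\cup_{p}W_{p}$ in $L^{2}_{\Gamma}(\RR^{\mu})$), so for each $\tau>0$ one can choose $q(\tau)$ and $p(\tau)$ with $\inf_{v\in X_{N(\tau)}}\Vert u-v\Vert_{X}^{2}\le1/\tau^{2}$, whence $\tau\,\inf_{v\in X_{N(\tau)}}\Vert u-v\Vert_{X}^{2}\le1/\tau\to0$. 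Combining along this sequence, $\Vert u-u_{N(\tau)}^{\tau}\Vert_{X^{(\bfC)}}^{2}\le\Vert u\,I_{\gamma>\tau}\Vert_{X^{(\bfC)}}^{2}+1/\tau\to0$, which is \eqref{proposition10-convergence}.

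I do not expect a genuine obstacle: the estimate is the quasi-optimality argument of Proposition~\ref{proposition8} adapted to truncated spaces, and the convergence is a two-term limit. The only points requiring care are the verification that $I_{\gamma\le\tau}v\in X_{N}^{\tau}$ for $v\in X_{N}$ (immediate from the tensor-product structure) and the disjoint-support orthogonality in the $X^{(\bfC)}$-inner product, which is what makes the tail and bulk errors add in squares rather than through a triangle inequality with an extra constant. The role of the truncation is precisely to avoid having to assume $u\in X^{(\gamma)}$: the argument only uses $u\in X^{(\bfC)}$, which always holds by Proposition~\ref{proposition7}(i).
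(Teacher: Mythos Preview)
Your proof is correct and follows essentially the same approach as the paper: both use the quasi-optimality of the Galerkin approximation in the $X^{(\bfC)}$-norm (Proposition~\ref{proposition8}), followed by the orthogonal splitting of the error into a tail part supported on $\{\gamma>\tau\}$ and a bulk part bounded via $[\Cdroit]\le\gamma\le\tau$ on $\{\gamma\le\tau\}$, with the convergence obtained by dominated convergence for the tail and density for the bulk. The only cosmetic difference is that you start directly from $v\in X_{N}$ and set $w=I_{\gamma\le\tau}v\in X_{N}^{\tau}$, whereas the paper starts from $v\in X_{N}^{\tau}$ and afterwards identifies $\inf_{v\in X_{N}^{\tau}}\Vert(u-v)I_{\gamma\le\tau}\Vert_{X}$ with $\inf_{v\in X_{N}}\Vert(u-v)I_{\gamma\le\tau}\Vert_{X}$; the two routes are equivalent.
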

\begin{proof}
We have $u\in X^{(\bfC)}$ (Proposition \ref{proposition7}) and  $X^{\tau}_{N}\subset X^{(\gamma)} \subset X^{(\bfC)}$.  We first note that
for all $v\in X_N^{\tau}$, 
\begin{align*}
\Vert (u-v)\Vert_{X^{(\bfC)}}^2 &= \Vert (u-v) I_{\gamma\le \tau}
\Vert_{X^{(\bfC)}}^2+ \Vert (u-v) I_{\gamma > \tau}
\Vert_{X^{(\bfC)}}^2\\
&\le \Vert (u-v) I_{\gamma\le \tau} \Vert_{X^{(\gamma)}}^2 + \Vert u
I_{\gamma > \tau} \Vert_{X^{(\bfC)}}^2\\
&\le \tau\Vert (u-v) I_{\gamma\le \tau} \Vert_{X}^2 + \Vert u
I_{\gamma > \tau} \Vert_{X^{(\bfC)}}^2.
\end{align*}
Then, using Proposition \ref{proposition8} (Eq.~\eqref{eq:galerkin-Xnorm}), we obtain
\begin{align*}
\Vert u-u_{N}^\tau\Vert_{X^{(\bfC)}}^2 &\le \inf_{v\in
X_{N}^\tau}\Vert u-v \Vert_{X^{(\bfC)}}^2 \le \tau \inf_{v\in X_{N}^{\tau}}\Vert (u-v)
I_{\gamma\le \tau} \Vert_{X}^2 +  \Vert u
I_{\gamma>\tau} \Vert_{X^{(\bfC)}}^2 \\
&=  \tau\inf_{v\in X_{N}}\Vert (u-v)
I_{\gamma\le \tau} \Vert_{X}^2 + \Vert u
I_{\gamma>\tau} \Vert_{X^{(\bfC)}}^2\\
&\le\tau \inf_{v\in X_{N}}\Vert u-v
\Vert_{X}^2 + \Vert u I_{\gamma>\tau}
\Vert_{X^{(\bfC)}}^2
\end{align*}
Since $\Vert u \Vert_{X^{(\bfC)}}$ is bounded,
 the second term converges to $0$ as $\tau\to \infty$. Also,
provided that $\cup_{N} X_{N}$ is dense in $X$, we can define
a sequence of spaces $X_{N(\tau)}$ such that
$$
{\tau}\inf_{v\in X_{N(\tau)}}\Vert u-v \Vert_{X}^2
\to 0\quad \text{as} \quad \tau\to\infty,
$$
which ends the proof.
\end{proof}

\subsection{High-dimensional approximation using {sparse or} low-rank approximations}

The approximation of the  map  $ u \in X$ %=H^{1}_{0}(D) \otimes L^{2}_{\Gamma_{N_{g}}}(\RR^{N_{g}}) \otimes L^{2_{\Gamma_{\nu}}(\RR^{\nu})$
requires the introduction of adapted complexity reduction techniques. Indeed, when introducing approximation spaces
$V_{q}\subset H^{1}_{0}(D)$ and $W_{p} = W_{p}^{\bfy}\otimes W_{p}^{\bfz}\subset L^{2}_{\Gamma_{N_{g}}}(\RR^{N_{g}})\otimes L^{2}_{\Gamma_{\nu}}(\RR^{\nu})$, the resulting approximation space $X_{N} = V_{q}\otimes W_{p}$  may have  a very high dimension $dim(V_{q})\times dim(W_{p})$. For classical non adapted constructions of approximation spaces $W_{p}$, the dimension of $W_{p}$ has  typically
an exponential (or factorial) increase with $N_{g}$ and $\nu$ (e.g. with polynomial spaces $W_{p}^{\bfy} = \PP_{p}(\RR^{N_{g}})$ and $W_{p}^{\bfz}=\PP_{p}(\RR^{\nu})$). Therefore, the use of standard approximation techniques would restrict the applicability of the identification procedure to a class of random fields
with a germ $\bfXi$ of small dimension (small $N_{g}$) and a low order in the representation of random fields (small~$\nu$).

{
\paragraph{Sparse tensor approximation}
A first possible way to reduce the complexity and address high-dimensional problems is to use adaptive sparse tensor approximation methods \cite{Cohen2010}. Supposing that $\{\phi_i = \otimes_{k=1}^\mu \phi_{i_k}^{(k)} ;i\in \mathbb{N}^\mu\}$ constitutes a basis of $W$, it consists in constructing a sequence of approximations 
$u_M   \in V_q \otimes W_M$, with  $W_M=\mathrm{span}\{\phi_i ;i\in \Lambda_M\}$, where $\Lambda_M \subset \mathbb{N}^\mu$ is a subset of $M$ indices constructed adaptively. For some classes of random fields (that are simpler than the one proposed in the present paper), algorithms have been proposed for the adaptive construction of the sequence of subsets $\Lambda_M$ and some results have been obtained for the convergence of the corresponding sequence of approximations. In \cite{Cohen2013}, convergence results have been obtained for different classes of random fields and in particular for two classes of random fields that are closely related to the ones proposed in the present paper, namely random fields that are obtained with the exponential or the square of a random field which admits an affine decomposition in terms of the parameters. Under suitable conditions on the convergence of the series expansion of this random field, the authors prove the convergence with a convergence rate independent on the dimension $\mu$. Such results may be obtained for the class of random fields proposed in the present paper under some conditions on the underlying second order random field $[\bfG]$. This will be addressed in future work. 
}

\paragraph{Low-rank approximations.}
Low-rank tensor approximation methods can be used in order to reduce the complexity of the approximation of functions in tensor product spaces \cite{Hackbusch2012}.
 They consist in approximating the solution in a subset $\curM\subset X_{N}$ of low-rank tensors. Different tensor structures of space $X_{N}$ can be exploited, such as  $X_{N}=V_{q}\otimes W_{p}$ or $X_{N}=V_{q}\otimes W_{p}^{\bfy}\otimes W_{p}^{\bfz}$. Also, if the measures $\Gamma_{N_{g}}$ and $\Gamma_{\nu}$ are tensor product of measures, then $W_{p}^{\bfy}$ and $W_{p}^{\bfz}$ can be chosen as   tensor product approximation spaces  $W_{p}^{\bfy} = \otimes_{k=1}^{N_{g}} W_{p}^{\bfy,(k)}$ and $W_{p}^{\bfz} = \otimes_{k=1}^{\nu} W_{p}^{\bfz,(k)}$.  Low-rank approximation methods
then consist in approximating the solution in a subset $\curM\subset X_{N}$ of low-rank tensors, which is a subset of low dimension in the sense that $\curM$ can be parameterized by a small number of parameters (small compared to the dimension of $X_{N}$). Different low-rank formats can be used, such as canonical format, Tucker format, Hierarchical Tucker format or more general tree-based tensor formats (see e.g. \cite{Hackbusch2012}). These tensor subsets can be formally written as
$$
\curM = \left\{ v = F_{\curM}(w_{1},\hdots,w_{\ell}) ; w_{1} \in \RR^{r_{1}},\hdots,w_{\ell} \in \RR^{r_{\ell}} \right\}
$$
where $F_{\curM}$ is a multilinear map with values in $X_{N}$ and where the $w_{k}\in\RR^{r_{k}}$ ($k=1,\hdots, \ell$) are the parameters. The dimension of such a parametrization is  $\sum_{k=1}^{\ell} r_{k}$. As an example, $\curM$ can be chosen as the set of rank-$m$ canonical tensors in $X_{N}= V_{q}\otimes W_{p}^{\bfy}\otimes W_{p}^{\bfz}$, defined by
$\curM = \{ v = \sum_{i=1}^{m} w_{i}^{\bfx} \otimes w_{i}^{\bfy} \otimes w_i^{\bfz}; w_{i}^{\bfx}\in V_{q}, w_{i}^{\bfy}\in W_{p}^{\bfy},w_{i}^{\bfz}\in W_{p}^{\bfz}\}.$

\paragraph{Algorithms for the low-rank approximation of $u_{N}\in X_{N}$.}

The Galerkin approximation $u_{N}\in X_{N}$ of $u$ is the unique minimizer of the strongly convex functional $J:X_{N}\rightarrow \RR$ defined by  $J(v) = \frac{1}{2} a(v,v) - F(v)$. A low-rank approximation $u_{r} \in \curM_{r}$ of $u_{N}$ can then be obtained by solving the optimization problem
$$
\min_{v\in\curM} J(v) = \min_{w_{1}\in \RR^{r_{1}},\hdots,w_{\ell}\in \RR^{r_{\ell}}} J(F_{\curM}(w_{1},\hdots,w_{\ell})).
$$
This problem can be solved using alternated minimization algorithms or other optimization algorithms (see \cite{Hackbusch2012}).  Also, greedy procedures using low-rank tensor subsets can be introduced in order to construct a sequence of approximations $\{u^{k}\}_{k}$ with $u^{k+1} = u^{k} + w^{k+1}$ and $w^{k+1}\in \curM$ defined by
$
J(u^{k}+w^{k+1}) = \min_{v\in\curM } J(u^{k} + v). 
$
We refer to \cite{Falco2012} for the analysis of a larger class of greedy algorithms in the context of convex optimization problems in tensor spaces, and to \cite{Nouy2010} for the practical implementation of some algorithms in the context of stochastic parametric partial differential equations.

{\begin{rem}[On the approximability of the solution using  low-rank approximations]
%\paragraph{Remarks on the efficiency low-rank approximation methods.}
A very few results are available concerning the complexity reduction that can be achieved using  low-rank approximations, in particular for the approximation of high-dimensional maps arising in the context of high-dimensional parametric stochastic equations. 
Of course, results about sparse polynomial approximation methods (e.g. \cite{Cohen2013}) could be directly translated in the context of low-rank tensor methods (a sparse decomposition on a multidimensional polynomial basis being a low-rank decomposition) but this would not allow to quantify the additional reduction that could be achieved by low-rank methods. 
Providing a priori convergence results for low-rank approximations for the proposed class of random fields would deserve an independent analysis which is out of the scope of the present paper. However, we notice that some standard arguments could be used in order to provide a priori convergence results of low-rank approximations in the 2-order tensor space $X = H_0^1(D) \otimes L^2_{\Gamma}(\mathbb{R}^\mu) = L^2_{\Gamma}(\mathbb{R}^\mu ; H_0^1(D))$. Indeed, under suitable regularity conditions on the samples of the random fields in the class (related to properties of the covariance structure of the algebraic prior), the set of solutions $\mathcal{U} = \{u(\cdot,\bfy,\bfz);(\bfy,\bfz)\in \mathbb{R}^{\mu}\}$ could be proven to be a subset of $H^1_0(D)$ with rapidly convergent Kolmogorov $n$-width\footnote{{For example, if the set of solutions $\mathcal{U} $ is a bounded subset of $H^2(D)$ (which can be obtained with a simple a posteriori analysis if the random fields have samples with $C^1$ regularity), then the Kolmogorov  $n$-width can be proven to converge as $n^{-1/d}$ where $d$ is the spatial dimension.}}, therefore proving the existence of a good sequence of low-dimensional spaces in $H^1_0(D)$ for the approximation of the set of solutions $\mathcal{U}$. At least the same convergence rate (with respect to the rank) could therefore be expected for low-rank approximations that are optimal with respect to a $L^2$-norm in the parameter domain. 
These questions will be investigated in a future work.
\end{rem}
}

 \subsection{Remarks about the identification procedure}

The identification procedure has been presented in Section~\ref{Section3.3}. In view of the numerical analysis presented above, the following remarks can be done.
Different approximations of the map $u$ should be constructed at the different
steps of the identification procedure. Indeed, the quality of
approximations of $u$ clearly depends on the choice of measure $\Gamma_{\nu}$ on the parameters space $\RR^{\nu}$. It is recalled that
the parametrization of the class  $\curK=\{\curK^{(m,N)}(\cdot,\bfXi,\curM_{[a]}(\bfz)) \, ;\, \bfz\in \RR^{\nu} \}$ of random fields depends on the set of parameters $[a] = \curM_{[a]}(0)$. The map $u$ depends on the choice of $[a]$ but it is clearly independent on the choice of measure $\Gamma_{\nu}$, provided that the support $supp(\Gamma_{\nu})=\RR^{\nu}$. However, the Galerkin approximation $u_{N}$ being optimal with respect to a norm that depends on the measure $\Gamma_{\nu}$, the quality of the approximation clearly depends on measure $\Gamma_{\nu}$.
For that reason, the choice of measure $\Gamma_{\nu}$ can be updated throughout the
identification procedure in order to improve the approximation of
$u$ as explained at the end of Section~\ref{Section3.3}. Step~6 and step~7 described in Section~\ref{Section3.3} can then be clarified as follows.

At \emph{Step 6}, the parametrization of $\curK$ around $[a]=[y_{0}]$ is used and
 an approximation of the corresponding map
$u$ in $X$ is constructed using low-rank tensor methods. The use of this
explicit map allows a fast maximization of the likelihood function to be performed,
yielding an estimation $\check{\mathbf{z}}$ of $\bfz$  (or equivalently  $[\check{y}]$ of $[y]$).
% Note that the quality of
%approximations will clearly depend on the choice of measure on the
%set $\mathbb{R}^d$ of parameters, and also on the choice of
%parameter $\sigma$ in the definition of the map
%$\mathcal{M}_{[y_0]}$. At this first step, we have no information on
%the probability measure of coefficients $\mathbf{Z}$, and therefore,
%the Gaussian measure $\Gamma_d$ seems to be a rather good non
%informative choice. Note that parameter $\sigma$  has to be
%estimated before solving the boundary value problem.

At \emph{Step 7}, the
map $u$ constructed in Step 6 could be used for the subsequent Bayesian update.
However, it is also possible to construct a new map that takes into
account the result of the maximum likelihood estimation. Therefore, the parametrization of $\curK$ around
$[a]={[\check{y}]}$ is used and the boundary value problem is solved again in order to obtain
an approximation of the corresponding map $u(\cdot,\cdot,\cdot;{[\check{y}]})$ in $X$.
%(Note that parameter $\sigma$ in the definition of the map
%$\mathcal{M}_{{[\check{y}]}}$ could be re-estimated.)
This map can
then be efficiently used for solving the Bayesian update problem.

%
%\begin{rem}
% Note that once we have obtained a posterior distribution for the
%parameters, we could improve the quality of the estimation by
%recomputing an approximate map $u$ that takes into account the
%posterior distribution of parameters. It may be done by replacing
%the measure $\Gamma_d$ by the posterior distribution of parameters
%$\mathbf{Z}$. A new Bayesian update could improve the first
%estimation. This procedure could be repeated iteratively.
%\end{rem}
%
%

\section{Conclusion}
\label{Section5}
In this paper, we have presented new results allowing an unknown non-Gaussian positive matrix-valued random field to be identified through a stochastic elliptic boundary value problem, solving a statistical inverse problem. In order to propose a constructive and efficient methodology and analysis of this challenging problem in high stochastic dimension, a new general class of non-Gaussian positive-definite matrix-valued random fields, adapted to the statistical inverse problems in high stochastic dimension for their experimental identification, has been introduced. For this class of random fields, two types of representation are proposed: the exponential type representation and the square type representation. Their properties have been analyzed. A  parametrization of discretized random fields belonging to this general class has been proposed and analyzed for the two types of representation. Such parametrization has been  constructed using a polynomial chaos expansion with random coefficients and a minimal parametrization of the compact Stiefel manifold related to these random coefficients. Using this parametrization of the general class,  a complete identification procedure has been proposed. Such a statistical inverse problem requires to solve the stochastic boundary value problem in high stochastic dimension with efficient and accurate algorithms. New results of the mathematical and numerical analyzes of the parameterized stochastic elliptic boundary value problem have been presented. The numerical solution provides an explicit approximation of the application that maps the parameterized general class of random fields to the corresponding set of random solutions. Since the proposed general class of random fields possibly contain random fields which are not uniformly bounded, a particular mathematical analysis has been developed and  dedicated approximation methods have been introduced. In order to obtain an efficient algorithm for constructing the approximation of this very high-dimensional map, we have {described possible} complexity reduction methods using {sparse or} low-rank approximation methods that exploit the tensor structure of the solution which results from the parametrization of the general class of random fields.
{
In this paper, we do not provide any result concerning the complexity reduction that can be achieved when using low-rank or sparse approximation methods for the proposed class of random fields. These results would require a fine analysis of the regularity and structures of the solution map. Some recent results are already available for sparse approximation methods and for some simpler classes of random fields (see \cite{Cohen2013}). This type of results should be extended to the present class of random fields. Concerning low-rank methods, a very few quantitative convergence results are available. Some recent results are provided in \cite{Schneider2013} for the approximation of functions with Sobolev-type regularity. For the present class of random fields, specific analyses are necessary to  understand the structure of the solution map induced by the proposed parametrization and to quantify the complexity reduction that could be achieved with low-rank methods. These challenging issues will be addressed in future works. 
} 

\section*{Acknowledgements}
This research was supported by the "Agence Nationale de la Recherche", Contract TYCHE, ANR-2010-BLAN-0904.

\label{lastpage}

\end{document}